\theoremstyle{plain}
\newtheorem{theorem}{Theorem}[section]
\newtheorem{lemma}[theorem]{Lemma}
\newtheorem{corollary}[theorem]{Corollary}
\newtheorem{proposition}[theorem]{Proposition}
\theoremstyle{definition}
\newtheorem{definition}[theorem]{Definition}
\theoremstyle{remark}
\newtheorem{remark}{Remark}
\begin{document}

\title[Profinite Conjugacy]{Generalized Bowen-Franks Groups and Profinite Conjugacy of Hyperbolic Toral Automorphisms}

\author[Bakker]{Lennard F. Bakker}
\address{Department of Mathematics, Brigham Young University, Provo, Utah, USA}
\email{bakker@mathematics.byu.edu}
\author[Martins Rodrigues]{Pedro Martins Rodrigues}
\address{Department of Mathematics, Instituto Superior T\'ecnico, Univ. Tec. Lisboa, Lisboa, Portugal}
\email{pmartins@math.ist.utl.pt}

\subjclass[2010]{Primary: 37C15; Secondary: 37C79, 37D20, 20E18} 
\keywords{generalized Bowen-Franks groups, profinite conjugacy, hyperbolic toral automorphism, cyclicity}

\begin{abstract}
We show that a collection of generalized Bowen-Franks group, what we call the principal Bowen-Franks $R$-modules, form a complete set of $R$-module invariants for the equivalence relation of profinite conjugacy for similar hyperbolic toral automorphisms. We also show that these principal generalized Bowen-Franks $R$-modules are the principal invariants in a large class of generalized Bowen-Franks $R$-module invariants for similar hyperbolic automorphisms. We further show how the conjugacy invariant of cyclicity, when applied to hyperbolic toral automorphisms, is realized for profinite conjugacy.
\end{abstract}

\maketitle

\section{Introduction}

Generalized Bowen-Franks groups are abelian group invariants for equivalence relations in symbolic dynamics and toral endomorphisms. The original Bowen-Franks group arose as an invariant of flow equivalence for suspension flows of subshifts of finite type \cite{BF77}. Following the notation of Lind and Marcus \cite{LM21}, the Bowen-Franks group for $A\in M_n({\mathbb Z})$, an $n\times n$ integer matrix representing the subshift of finite type, is
\[ {\rm BF}(A) = {\mathbb Z}^n/{\mathbb Z}^n(I-A),\]
where $I-A$ acts on the right and the elements of ${\mathbb Z}^n$ are row vectors. Franks showed that ${\rm BF}(A)$, together with the flow equivalence invariant ${\rm det}(I-A)$ of Parry and Sullivan \cite{PS75}, are a complete set of computable invariants of shift equivalence for irreducible subshifts of finite type \cite{Fr84}. The Bowen-Franks group ${\rm BF}(A)$ has been an important tool in the classification of reducible subshifts \cite{Hu95} as well as general subshifts \cite{Ma01,Ma21}. More invariants of shift equivalence for subshifts of finite type were introduced by Kitchens that generalized the Bowen-Franks group \cite{Ki98}. These generalized Bowen-Franks groups are of the form
\[ {\rm BF}_g(A) = {\mathbb Z}^n/{\mathbb Z}^n g(A),\]
where $g\in {\mathbb Z}[t]$ is a polynomial with $g(0)$ being a unit in ${\mathbb Z}$, i.e., $g(0) = \pm 1$ \cite{Ki98, LM21}. The original Bowen-Franks group ${\rm BF}(A)$ corresponds to ${\rm BF}_g(A)$ for $g(t) = 1-t$. Another generalization of the Bowen-Franks group ${\rm BF}(A)$ was introduced by one of the authors and Sousa Ramos for toral endomorphisms of ${\mathbb T}^n$ which are represented by $A\in M_n({\mathbb Z})$ \cite{MR05}. These generalized Bowen-Frank groups ${\rm BF}_g(A)$ are invariants of topological conjugacy for $A$ where $g\in {\mathbb Z}[t]$. Topological conjugacy of two toral endomorphisms $A,B\in M_n({\mathbb Z})$ is equivalent to conjugacy by an element of ${\rm GL}_n({\mathbb Z})$, i.e., there exists $C\in {\rm GL}_n({\mathbb Z})$ such that $AC = CB$  (cf \cite{AP65}). For a toral endomorphism $A \in M_n({\mathbb Z})$, each generalized Bowen-Franks group ${\rm BF}_g(A)$ has the additional structure of a right $R$-module for a ring $R$ isomorphic to the matrix ring
\[ {\mathbb Z}[A] = \{ p(A) : p\in {\mathbb Z}[t]\}\]
(see Subsection \ref{sec:Module}). The structure of the finitely generated abelian group ${\rm BF}_g(A)$, with $A$ representing a subshift of finite type or a toral endomorphism, can be determined by computing the Smith normal form of $g(A)$. One of the authors and Sousa Ramos showed, for a given irreducible monic polynomial $f\in {\mathbb Z}[t]$ satisfying $f(0) = \pm 1$ and having square-free discriminant, that all $A\in {\rm GL}_n({\mathbb Z})$ having characteristic polynomial $f(t)$ have $R$-module isomorphic ${\rm BF}_g(A)$ for each $g\in {\mathbb Z}[t]$  \cite{MR05}. Chen showed in some cases that ${\rm BF}_g(A)$, as an abelian group, is determined by the zeta function of $A$ for those $g\in{\mathbb Z}[t]$ with $g(0) = \pm 1$, without assuming irreducibility of the characteristic polynomial of $A$ \cite{Ch09}. When $A\in {\rm GL}_n({\mathbb Z})$ is a hyperbolic toral automorphism (no eigenvalue of $A$ has modulus $1$) and $g_k(t) = t^k-1$, $k\in {\mathbb N}$, each generalized Bowen-Franks group,
\[ {\rm BF}_k(A) = {\mathbb Z}^n/{\mathbb Z}^n g_k(A) = {\mathbb Z}^n/{\mathbb Z}^n(A^k-I),\]
is a finite abelian group that is isomorphic by Pontryagin duality to the set ${\rm Per}_k(A)$ of $k$-periodic points of the left action of $A$ on ${\mathbb T}^n$, where the elements of ${\mathbb T}^n$ are column vectors. Note that the original Bowen-Franks group ${\rm BF}(A)$, corresponding to $g(t) = 1-t$, is isomorphic to ${\rm BF}_1(A)$, corresponding to $g_1(t) = t-1$. The main purpose of this paper is to show, when $A\in {\rm GL}_n({\mathbb Z})$ is hyperbolic, that the collection of {\it principal} generalized Bowen-Franks $R$-modules ${\rm BF}_k(A)$, $k \in {\mathbb N}$, is a complete set of invariants for an equivalence relation called profinite conjugacy.

Profinite conjugacy is a topological $R$-module isomorphism between two profinite $R$-modules. For a hyperbolic $A\in {\rm GL}_n({\mathbb Z})$, the principal Bowen-Franks $R$-modules ${\rm BF}_k(A)$, $k\in {\mathbb N}$, are parameterized by the collection ${\mathcal Q}^{\rm p}$ of polynomials $g_k$, $k\in {\mathbb N}$, and have an inverse limit $G_{ {\mathcal Q}^{\rm p},A}$ that is a profinite $R$-module, whose Pontryagin dual is the direct limit of ${\rm Per}_k(A)$, $k\in {\mathbb N}$, which is the rational torus. For another hyperbolic $B\in{\rm GL}_n({\mathbb Z})$ that is similar to $A$ over ${\mathbb Q}$ i.e., there exists $P\in {\rm GL}_n({\mathbb Q})$ such that $AP = PB$, so it has the same characteristic polynomial as that of $A$, the principal Bowen-Franks groups ${\rm BF}_k(B)$, $k\in {\mathbb N}$, have an inverse limit $R$-module $G_{ {\mathcal Q}^{\rm p},B}$. Similar hyperbolic $A,B\in {\rm GL}_n({\mathbb Z})$ have a common ring $R$ to which both ${\mathbb Z}[A]$ and ${\mathbb Z}[B]$ are isomorphic (see Subsection \ref{sec:Module}), and are said to be strongly ${\rm BF}$-equivalent over ${\mathcal Q}^{\rm p}$ if ${\rm BF}_k(A)$ and ${\rm BF}_k(B)$ are $R$-module isomorphic, written  ${\rm BF}_k(A) \cong_R {\rm BF}_k(B)$, for all $k\in {\mathbb N}$. We showed that if similar hyperbolic $A,B\in {\rm GL}_n({\mathbb Z})$ are strongly ${\rm BF}$-equivalent over ${\mathcal Q}^{\rm p}$, then $G_{ {\mathcal Q}^{\rm p},A}$ and $G_{ {\mathcal Q}^{\rm p},B}$ are topologically $R$-module isomorphic, i.e., there exists a map $\Psi_{ {\mathcal Q}^{\rm p}}:G_{ {\mathcal Q}^{\rm p},A} \to G_{ {\mathcal Q}^{\rm p},B}$ that is an $R$-module isomorphism and a homeomorphism \cite{BM12}. The existence of such a map $\Psi_{ {\mathcal Q}^{\rm p}}:G_{ {\mathcal Q}^{\rm p},A}\to G_{ {\mathcal Q}^{\rm p},B}$ we call a profinite conjugacy over ${\mathcal Q}^{\rm p}$ between similiar hyperbolic $A,B\in {\rm GL}_n({\mathbb Z})$. The main result of this paper includes the converse: for similar hyperbolic $A,B\in{\rm GL}_n({\mathbb Z})$, if $A$ and $B$ are profinitely conjugate over ${\mathcal Q}^{\rm p}$, then $A$ and $B$ are strongly ${\rm BF}$-equivalent over ${\mathcal Q}^{\rm p}$. This establishes the equivalence of strong ${\rm BF}$-equivalence over ${\mathcal Q}^{\rm p}$ and profinite conjugacy over ${\mathcal Q}^{\rm p}$. 

One of the authors and Sousa Ramos proved a result in \cite{MR05} that implies for similar hyperbolic automorphisms $A,B\in {\rm GL}_n({\mathbb Z})$ with irreducible characteristic polynomial that if $A$ and $B$ are strongly {\rm BF}-equivalent over ${\mathcal Q}^{\rm p}$, then $A$ and $B$ are strongly {\rm BF}-equivalent over the collection ${\mathcal Q}^{\rm a}$ of polynomials $g\in {\mathbb Z}[t]$ satisfying ${\rm det}(g(A))\ne 0$, i.e., ${\rm BF}_g(A) \cong_R {\rm BF}_g(B)$ for all $g\in {\mathcal Q}^a$. We show, as a corollary of the main result, that strong {\rm BF}-equivalence of $A$ and $B$ over ${\mathcal Q}^{\rm p}$ implies strong {\rm BF}-equivalence of $A$ and $B$ over ${\mathcal Q}^{\rm a}$ without assuming irreducibility of the characteristic polynomial of the similar hyperbolic $A$ and $B$. Hence, in the irreducible and reducible cases for similar hyperbolic toral automorphisms, the principal Bowen-Franks $R$-modules are the principal invariants in the class of generalized Bowen-Franks $R$-module invariants parameterized by ${\mathcal Q}^{\rm a}$.

For irreducible similar hyperbolic toral automorphisms, the principal generalized Bowen-Franks $R$-modules are a complete set of invariants for the equivalence relations of weak equivalence of the associated ideals and block conjugacy of the matrices as well as for profinite conjugacy. Here irreducible means the characteristic polynomial $f(t)$ for the similar $A,B\in {\rm GL}_n({\mathbb Z})$ is irreducible in ${\mathbb Z}[t]$. Fixing a root $\beta$ of $f(t)$, the selection of a right eigenvector $u = (u_1,\dots,u_n)^{\rm T}$ of $A$, i.e., $Au = \beta u$, with entries $u_i$ in the algebraic number field ${\mathbb Q}(\beta)$ determines a (fractional) ideal 
\[ I_A = u_1{\mathbb Z} + \cdots + u_n {\mathbb Z}\]
in the ring ${\mathbb Z}[\beta]$. By the Lattimer-MacDuffey-Taussky one-to-one correspondence \cite{LM33,Ta78}, different choices of right eigenvectors determine arithmetically equivalent ideals in ${\mathbb Z}[\beta]$. Similarly the selection of a right eigenvector $v = (v_1,\dots,v_n)^{\rm T}$ for $B$ determines a (fractional) ideal $I_B$ in ${\mathbb Z}[\beta]$. Two (fractional) ideals $I$ and $J$ of ${\mathbb Z}[\beta]$ are weakly equivalent if there exists (fractional) ideals $X$ and $Y$ of ${\mathbb Z}[\beta]$ such that $IX = J$ and $JY = I$ \cite{DTZ62}. We showed that for similar hyperbolic $A,B\in {\rm GL}_n({\mathbb Z})$ with an irreducible characteristic polynomial, if $I_A$ and $I_B$ are weakly equivalent, then $A$ and $B$ are strongly {\rm BF}-equivalent over ${\mathcal Q}^{\rm a}$ \cite{BM12}. Since $A$ is hyperbolic, each $g_k(A) = A^k-I$ is invertible, and so $I_A$ and $I_B$ being weakly equivalent implies that $A$ and $B$ are strongly ${\rm BF}$-equivalent over ${\mathcal Q}^{\rm p}$. We showed that for similar hyperbolic $A,B\in {\rm GL}_n({\mathbb Z})$ with an irreducible characteristic polynomial, if $A$ and $B$ are profinitely conjugate over ${\mathcal Q}^{\rm p}$, then $I_A$ and $I_B$ are weakly equivalent \cite{BMM16}. Similar $A,B\in {\rm GL}_n({\mathbb Z})$ (not assumed hyperbolic) with an irreducible characteristic polynomial are called block conjugate if there exists $A^\prime,B^\prime \in {\rm GL}_n({\mathbb Z})$ and $M,N\in {\rm GL}_{2n}({\mathbb Z})$ such that the block diagonal matrices $A\oplus A^\prime$, $B\oplus B$, $A\oplus A$, and $B\oplus B^\prime$ satisfy
\[ (A\oplus A^\prime)M = M(B\oplus B) {\rm\ and \ }(A\oplus A)N = N(B\oplus B^\prime).\]
We showed that similar $A,B\in {\rm GL}_n({\mathbb Z})$ with irreducible characteristic polynomial are block conjugate if and only if the associated ideals $I_A$ and $I_B$ are weakly equivalent \cite{BM19}. Applying this to similar hyperbolic $A,B\in {\rm GL}_n({\mathbb Z})$ with irreducible characteristic polynomial gives $I_A$ and $I_B$ are weakly equivalent if and only if $A$ and $B$ are block conjugate. Putting these results together gives that the following are equivalent for irreducible similar hyperbolic $A,B\in {\rm GL}_n({\mathbb Z})$:
\begin{enumerate}
\item[(a)] $I_A$ and $I_B$ are weakly equivalent,
\item[(b)] $A$ and $B$ are block conjugate
\item[(c)] $A$ and $B$ are profinitely conjugate over ${\mathcal Q}^{\rm p}$,
\item[(d)] $A$ and $B$ are strongly ${\rm BF}$-equivalent over ${\mathcal Q}^{\rm p}$.
\end{enumerate}
This establishes the principal generalized Bowen-Franks $R$-modules as a set of complete invariants for the equivalence relations listed in items (a), (b), and (c) when $A$ and $B$ are irreducible similar hyperbolic toral automorphisms.

The structure of this paper is as follows. Section 2 states some basic definitions and theory for profinite groups, as found in \cite{RZ10}, that will be used in the proof of the main result. In particular profinite topologies, profinite completions, and cofinality are detailed. Section 3 applies the results from Section 2 to profinite completions of ${\mathbb Z}^n$ with particular attention to the full profinite topology of ${\mathbb Z}^n$ and its accompanying completion. In Section 4 we prove for ${\mathcal Q}$ being either ${\mathcal Q}^p$ or ${\mathcal Q}^a$ the main result that profinite conjugacy over ${\mathcal Q}$ implies strong ${\rm BF}$-equivalence over ${\mathcal Q}$. Prior to giving the proof in this section, the construction of the profinite $R$-modules $G_{{\mathcal Q},A}$ for ${\mathcal Q} = {\mathcal Q}^{\rm p}$ and ${\mathcal Q}^{\rm a}$ is given. We already have a proof of the main result over ${\mathcal Q}^p$ for {\it irreducible} similar hyperbolic toral automorphisms, as this follows from the implications (c) $\Rightarrow$ (a) $\Rightarrow$ (d) stated in the previous paragraph. The proof presented in Section 4 does not assume irreducibility and does not use weak equivalence. In Section 5 we present some properties of profinite conjugacies for similar hyperbolic toral automorphisms. We correct a characterization from \cite{BM12} of when profinitely conjugate similar hyperbolic toral automorphisms are conjugate. We also show how profinite conjugacy realizes the conjugacy invariant of cyclicity when applied to hyperbolic toral automorphisms.

\section{Profinite Theory}

\subsection{Profinite Topologies}

A profinite topology on an abelian group $G$ is a translation invariant topology determined by selecting a neighbourhood basis of the identity element $0$ of $G$ in the following way. Let ${\mathcal N}$ be a nonempty collection of finite index subgroups of $G$ that is filtered from below, i.e., for any $N_1,N_2\in{\mathcal N}$ there exists $N_3\in{\mathcal N}$ such that $N_3\subset N_1\cap N_2$. The filtered from below property makes ${\mathcal N}$ into a directed poset for the binary relation $\preceq$ on ${\mathcal N}$ defined by $N_1\preceq N_2$ if and only if $N_2 \subset N_1$. The ``directed'' part is that for any $N_1,N_2\in{\mathcal N}$ there exists $N_3\in{\mathcal N}$ such that $N_1\preceq N_3$ and $N_2\preceq N_3$. Viewing ${\mathcal N}$ as a fundamental system of neighbourhoods (which is the same as a neighbourhood basis) of the identity element $0$ of $G$ determines a profinite topology on $G$ (see \cite{Bo89}).

For a nonempty collection ${\mathcal N}$ of filtered from below finite index subgroups of $G$, the type of profinite topology on $G$ determined by ${\mathcal N}$ is described by the class ${\mathcal C}$ of finite abelian groups to which all the quotients $G/N$, $N\in {\mathcal N}$, belong. A specified class ${\mathcal C}$ of finite abelian groups always contains all isomorphic copies of the groups in it.  The simplest structure on a class ${\mathcal C}$ is that of a formation. A class ${\mathcal C}$ of finite abelian groups is called a formation if it satisfies the following two properties: (1) if $K\in {\mathcal C}$ and $H\leq K$, then $K/H\in {\mathcal C}$, and (2) if $H_j\in {\mathcal C}$ for $j=1,\dots,r$ and $K$ is a subgroup of the direct product $H_1\times\cdots\times H_r$ such that for the canonical projections (epimorphisms)
\[ q_j: H_1\times\cdots \times H_j\times\cdots\times H_r\to H_j,\]
the restrictions $q_j\vert_K:K\to H_j$ are surjections for all $j=1,\dots,r$, then $K\in {\mathcal C}$. The first property is the closure of ${\mathcal C}$ under taking quotients. The second property is the closure of ${\mathcal C}$ under taking subdirect products. The second property implies the closure of ${\mathcal C}$ under taking direct products. The class ${\mathcal C}$ of all finite abelian groups is readily verified as a formation.

For a formation ${\mathcal C}$ of finite abelian groups, a pro-${\mathcal C}$ topology on $G$ is a profinite topology determined by a nonempty collection ${\mathcal N}$ of filtered from below finite indexed subgroups of $G$ that satisfies $G/N\in {\mathcal C}$ for all $N\in{\mathcal N}$. The full pro-${\mathcal C}$ topology on $G$ is the pro-${\mathcal C}$ topology determined by the nonempty filtered from below collection
\[ {\mathcal N}_{\mathcal C}(G) = \{ N \leq_f G : G/N \in {\mathcal C}\},\]
where $\leq_f$ means finite index subgroup.

\subsection{Profinite Completions}

A profinite completion of $G$ is induced by a profinite topology on $G$. Let ${\mathcal N}$ be a nonempty collection of finite index subgroups of $G$ that is filtered from below. For each $N\in{\mathcal N}$ equip the finite abelian quotient group $G/N$ with the discrete topology. Each $G/N$ is a compact, Hausdorff, totally disconnected topological group. For $N,M\in{\mathcal N}$ satisfying $M\preceq N$ and $M\ne N$, the natural epimorphism
\[ \varphi_{NM}:G/N \to G/M {\rm \ given\ by\ } x + N \mapsto x +M\]
is continuous. Denote the coset $x+N$ of $G/N$ by $[x]_N$. For each $N\in {\mathcal N}$ we set $\varphi_{NN}$ to be the identity map from $G/N$ to $G/N$. For all $N,M,P\in {\mathcal N}$ satisfying $N\preceq M\preceq P$ there holds $\varphi_{PN} = \varphi_{MN}\varphi_{PM}$. The continuous epimorphisms $\varphi_{NM}$ for $M\preceq N$ are called transitions maps. The collection $\{ G/N, \varphi_{NM}, {\mathcal N}\}$ is a surjective inverse system of topological finite abelian groups. The profinite completion of $G$ with respect to the profinite topology on $G$ determined by ${\mathcal N}$ is the inverse limit of the surjective inverse system,
\[ {\mathcal K}_{\mathcal N}(G) = \varprojlim_{N\in{\mathcal N}} G/N,\]
whose elements are the coherent tuples $\{[x_N]_N\}_{N\in{\mathcal N}}$ in $\prod_{N\in{\mathcal N}} G/N$, i.e., those tuples satisfying
\[ [x_M]_M = \varphi_{NM}([x_N]_N) = [x_N]_M\]
whenever $M\preceq N$. The profinite completion ${\mathcal K}_{\mathcal N}(G)$ is a nonempty, compact, Hausdorff, totally disconnected topological abelian group. The profinite completion ${\mathcal K}_{\mathcal N}(G)$ is called a profinite abelian group.

Several natural continuous homomorphisms are associated to the profinite abelian group ${\mathcal K}_{\mathcal N}(G)$. There are the continuous epimorphisms
\[ \varphi_{N}:{\mathcal K}_{\mathcal N}(G) \to G/N, \ \ \varphi_{N}( [x_M]_{M\in{\mathcal N}}) = [x_N],\ N\in {\mathcal N}.\]
The continuous epimorphisms $\iota_N:G\to G/N$, $N\in {\mathcal N}$, induce the continuous homomorphism $\iota_{\mathcal N}:G \to {\mathcal K}_{\mathcal N}(G)$ given by
\[ \iota_{\mathcal N}(x) =  \{[x]_N\}_{N\in{\mathcal N}},\]
which take each $x\in G$ to the constant tuple $\{[x]_N\}_{N\in{\mathcal N}}$ in ${\mathcal K}_{\mathcal N}(G)$. The map $\iota_{\mathcal N}$ has the property that $\iota_{\mathcal N}(G)$ is dense in ${\mathcal K}_{\mathcal N}(G)$. Furthermore $\iota_{\mathcal N}$ is a monomorphism if and only if 
\[ \bigcap_{N\in {\mathcal N}} N = \{0\}.\]

For a fixed formation ${\mathcal C}$ of finite abelian groups, a pro-${\mathcal C}$ completion of $G$ is the profinite abelian group ${\mathcal K}_{\mathcal N}(G)$ when $G/N\in {\mathcal C}$ for all $N\in {\mathcal N}$. The profinite completion of $G$ induced by the full profinite topology determined by ${\mathcal N}_{\mathcal C}(G)$ will be denoted by ${\mathcal K}_{\mathcal C}(G)$ instead of the more cumbersome ${\mathcal K}_{ {\mathcal N}_{\mathcal C}(G)}(G)$. The continuous homomorphism $\iota_{ {\mathcal N}_{\mathcal C}}(G):G \to {\mathcal K}_{\mathcal C}(G)$ will be denoted by the simpler $\iota_{\mathcal C}(G)$ as well. The profinite abelian group ${\mathcal K}_{\mathcal C}(G)$ is called residually ${\mathcal C}$ if
\[ \bigcap_{N\in {\mathcal N}_{\mathcal C}(G)} N = \{0\}.\]
This condition is equivalent to the continuous homomorphism $\iota_{\mathcal C}$ being a monomorphism.

We need a few additional notations to state the following result that relates the full pro-${\mathcal C}$ topology on $G$ with the topology on the profinite completion ${\mathcal K}_{\mathcal C}(G)$. The notation $\{ U: U\leq_o G;{\mathcal N}\}$ means the collection of all open subgroups $U$ of $G$ with respect to the profinite topology determined by a nonempty filtered from below collection ${\mathcal N}$ of finite index subgroups of $G$. For groups $H$ and $K$ with $H\leq K$ the notation $[K:H]$ means the index of $H$ in $K$. The notation $H\leq_o G$ means $H$ is an open subgroup of a topological group $G$. The following result (Proposition 3.2.2 in \cite{RZ10}) is presented in the abelian category, and will be used to prove the main result.

\begin{theorem}\label{fullproCtopology} Let ${\mathcal C}$ be a formation of finite abelian groups and suppose $G$ is a residually ${\mathcal C}$ group. Identify $G$ with its embedded image $\iota_{\mathcal C}(G)$ in its pro-${\mathcal C}$ completion ${\mathcal K}_{\mathcal C}(G)$. Let $\overline{X}$ denote the closure in ${\mathcal K}_{\mathcal C}(G)$ of a subset of $X$ in $G$.
\begin{enumerate}
\item[(a)] Let
\[ \Upsilon: \{ U: U \leq_o G; {\mathcal N}_{\mathcal C}(G)\} \to \{ V: V \leq_o {\mathcal K}_{\mathcal C}(G) \}\]
be the mapping that assigns to each open subgroup $H$ in $G$ $($with respect to the full profinite topology$)$ the closure $\Upsilon(H)=\overline{H}$ in ${\mathcal K}_{\mathcal C}(G)$. Then $\Upsilon$ is a one-to-one correspondence between the set of all open subgroups in the full pro-${\mathcal C}$ topology of $G$ and the set of all open subgroups of ${\mathcal K}_{\mathcal C}(G)$.
The inverse of $\Upsilon$ is
\[ V\to V\cap\iota_{\mathcal C}(G),\]
where in particular $\overline{ V\cap \iota_{\mathcal C}(G)} = V$ when
$V \leq_o {\mathcal K}_{\mathcal C}(G)$. 
\item[(b)] The topology of ${\mathcal K}_{\mathcal C}(G)$ induces on $G$ its full pro-${\mathcal C}$ topology via the map $V \to V\cap \iota_{\mathcal C}(G)$ and the identification of $G$ with $\iota_{\mathcal C}(G)$.
\item[(c)] If $H,K \in \{ U : U \leq_o G; {\mathcal N}_{\mathcal C}(G)\}$ and $H \leq K$, then $K/H \cong \overline{K}/\overline{H}$ which implies $[K:H] = [\overline {K}:\overline{H}]$.
\end{enumerate}
\end{theorem}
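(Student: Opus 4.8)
This statement is Proposition 3.2.2 of \cite{RZ10}, presented here in the abelian setting, so it suffices to indicate the proof. The plan is to reduce everything to two observations: that the open subgroups of $G$ in the full pro-${\mathcal C}$ topology are exactly the members of ${\mathcal N}_{\mathcal C}(G)$, and that for each such $H$ the closure $\overline{H}$ coincides with $\ker\varphi_H$. For the first, a subgroup $H\leq G$ is open in the topology with neighbourhood basis ${\mathcal N}_{\mathcal C}(G)$ iff it contains some $N\in{\mathcal N}_{\mathcal C}(G)$; then $[G:H]\leq [G:N]<\infty$ and $G/H$ is a quotient of $G/N\in{\mathcal C}$, hence $G/H\in{\mathcal C}$ by the first formation axiom, i.e.\ $H\in{\mathcal N}_{\mathcal C}(G)$. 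The converse inclusion is immediate. (This is the only place the formation hypothesis enters.) On the side of ${\mathcal K}_{\mathcal C}(G)$, the kernels $\ker\varphi_N$, $N\in{\mathcal N}_{\mathcal C}(G)$, of the projections $\varphi_N:{\mathcal K}_{\mathcal C}(G)\to G/N$ are open (preimages of $\{0\}$ in a discrete group), form a neighbourhood basis of $0$, and satisfy ${\mathcal K}_{\mathcal C}(G)/\ker\varphi_N\cong G/N$ since $\varphi_N$ is onto.

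The key lemma is that $\overline{H}=\ker\varphi_H$ for $H\in{\mathcal N}_{\mathcal C}(G)$. Since $\varphi_H\circ\iota_{\mathcal C}=\iota_H$ has kernel $H$ (and $\iota_{\mathcal C}$ is injective because $G$ is residually ${\mathcal C}$), one has $\ker\varphi_H\cap\iota_{\mathcal C}(G)=\iota_{\mathcal C}(H)$; combining this with the general fact that an open subgroup $V$ of a topological group satisfies $\overline{V\cap D}=V$ for every dense subgroup $D$ (an open subgroup is closed, and every neighbourhood of a point $v\in V$ meets $V\cap D$), applied to $V=\ker\varphi_H$ and $D=\iota_{\mathcal C}(G)$, gives $\ker\varphi_H=\overline{\iota_{\mathcal C}(H)}=\overline{H}$. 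Part (a) follows at once: $\Upsilon(H)=\overline{H}=\ker\varphi_H$ is open; $\Upsilon$ is injective because $\overline{H}\cap\iota_{\mathcal C}(G)=\iota_{\mathcal C}(H)$ recovers $H$; and it is onto because, given $V\leq_o{\mathcal K}_{\mathcal C}(G)$, one picks $N$ with $\ker\varphi_N\subseteq V$, sets $H=V\cap\iota_{\mathcal C}(G)\supseteq\iota_{\mathcal C}(N)$, notes $H\in{\mathcal N}_{\mathcal C}(G)$ by the first step, and gets $\overline{H}=\overline{V\cap\iota_{\mathcal C}(G)}=V$ again by the general fact. Part (b) is then immediate: $\ker\varphi_N\cap\iota_{\mathcal C}(G)=\iota_{\mathcal C}(N)$, so the subspace topology on $\iota_{\mathcal C}(G)$ has ${\mathcal N}_{\mathcal C}(G)$ as a neighbourhood basis of $0$, i.e.\ it is the full pro-${\mathcal C}$ topology.

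For (c), with $H\leq K$ both in ${\mathcal N}_{\mathcal C}(G)$, consider the homomorphism $K\to\overline{K}/\overline{H}$ sending $k\mapsto\iota_{\mathcal C}(k)+\overline{H}$. Its kernel is $K\cap\ker\varphi_H=H$ (since $\varphi_H(\iota_{\mathcal C}(k))=[k]_H$ vanishes exactly when $k\in H$), and its image is dense because $\iota_{\mathcal C}(K)=\overline{K}\cap\iota_{\mathcal C}(G)$ is dense in $\overline{K}$; as $\overline{H}$ is open in $\overline{K}$ the quotient $\overline{K}/\overline{H}$ is discrete, so the image is all of it. Hence $K/H\cong\overline{K}/\overline{H}$, and in particular $[K:H]=[\overline{K}:\overline{H}]$.

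I expect the main obstacle to be purely bookkeeping rather than conceptual: pinning down the density arguments exactly (that an open subgroup is recovered as the closure of its trace on a dense subgroup, and that a dense subgroup surjects onto a discrete quotient) and carrying out cleanly the identification $\overline{H}=\ker\varphi_H$, which is the bridge tying the topology of $G$ to that of ${\mathcal K}_{\mathcal C}(G)$.
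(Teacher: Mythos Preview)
Your sketch is correct, and in fact goes further than the paper does: the paper does not prove this theorem at all but simply quotes it as Proposition~3.2.2 of \cite{RZ10}, stated in the abelian setting. Your argument is the standard one (and is essentially the proof in \cite{RZ10}): identify the open subgroups of $G$ with ${\mathcal N}_{\mathcal C}(G)$ using the first formation axiom, establish the bridge $\overline{H}=\ker\varphi_H$ via the density of $\iota_{\mathcal C}(G)$ and the fact that open subgroups are closed, and then read off (a)--(c). The only point worth tightening is the surjectivity step in (a): when you write $H=V\cap\iota_{\mathcal C}(G)$ you are tacitly using the identification of $G$ with $\iota_{\mathcal C}(G)$, so make that explicit when you invoke ``$H\in{\mathcal N}_{\mathcal C}(G)$''. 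Otherwise the bookkeeping you flag as the main obstacle is handled cleanly.
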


\subsection{Cofinality}

Different choices of fundamental systems of neighbourhoods of the identity element may lead to topologically isomorphic profinite completions, where by topological isomorphism is meant an isomorphism that is also a homeomorphism. This can happen because of cofinality. Suppose ${\mathcal N}$ and ${\mathcal M}$ are nonempty filtered from below collections of finite index subgroups of $G$. The collection ${\mathcal M}$ is cofinal in ${\mathcal N}$ if ${\mathcal M} \subset {\mathcal N}$ and for every $N\in {\mathcal N}$ there exists $M\in {\mathcal M}$ such that $N \preceq M$, i.e., $M\subset N$. The following result (Lemma 1.1.9 in \cite{RZ10}) is presented in the notation of Subsection 2.2.

\begin{proposition}\label{topisoiota} For nonempty collections ${\mathcal N}$ and ${\mathcal M}$ of finite index subgroups of $G$ filtered from below, if ${\mathcal M}$ is cofinal in ${\mathcal N}$, then there exists a topological isomorphism $\Theta_{\mathcal{NM}}: {\mathcal K}_{\mathcal N}(G) \to {\mathcal K}_{\mathcal M}(G)$, given explicitly by
\[ \Theta_{ \mathcal{NM}} ( \{ [x_N]_N\}_{N\in{\mathcal N}}) = \{ [x_N]_N\}_{N\in{\mathcal M}},\]
that satisfies $\Theta_{\mathcal {NM}}\iota_{\mathcal N} = \iota_{\mathcal M}$.
\end{proposition}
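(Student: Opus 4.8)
The plan is to verify directly that the explicitly given restriction map $\Theta_{\mathcal{NM}}$ is a well-defined bijective continuous homomorphism, and then to invoke compactness to upgrade it to a homeomorphism. First I would check well-definedness: since $\mathcal{M}\subset\mathcal{N}$, restricting a coherent tuple $\{[x_N]_N\}_{N\in\mathcal{N}}$ to the index set $\mathcal{M}$ yields the tuple $\{[x_N]_N\}_{N\in\mathcal{M}}$, whose coherence conditions form a subset of those already satisfied, so it lies in $\mathcal{K}_{\mathcal{M}}(G)$. That $\Theta_{\mathcal{NM}}$ is a group homomorphism is immediate from the componentwise definition of the group operations on the inverse limits, and $\Theta_{\mathcal{NM}}\iota_{\mathcal{N}}=\iota_{\mathcal{M}}$ follows by comparing constant tuples.

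Next I would establish injectivity using cofinality. Suppose $\Theta_{\mathcal{NM}}(\{[x_N]_N\}_{N\in\mathcal{N}})$ is the zero tuple, so $x_M\in M$ for all $M\in\mathcal{M}$. Given an arbitrary $N\in\mathcal{N}$, cofinality supplies $M\in\mathcal{M}$ with $N\preceq M$, and coherence gives $[x_N]_N=\varphi_{MN}([x_M]_M)=\varphi_{MN}(0)=0$; hence the original tuple already vanishes. For surjectivity, given $\{[y_M]_M\}_{M\in\mathcal{M}}\in\mathcal{K}_{\mathcal{M}}(G)$ I would define, for each $N\in\mathcal{N}$, an element $[x_N]_N:=\varphi_{MN}([y_M]_M)$ where $M\in\mathcal{M}$ is chosen with $N\preceq M$. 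The key technical point is that this is independent of the choice of $M$: if $M_1,M_2$ both work, pick $M_3\in\mathcal{M}$ below both (filtered from below), and use the transition-map cocycle identity together with coherence of $\{[y_M]_M\}$ to obtain $\varphi_{M_1N}([y_{M_1}]_{M_1})=\varphi_{M_3N}([y_{M_3}]_{M_3})=\varphi_{M_2N}([y_{M_2}]_{M_2})$. A similar application of the cocycle identity shows the tuple $\{[x_N]_N\}_{N\in\mathcal{N}}$ is coherent, and taking $M=N$ when $N\in\mathcal{M}$ shows it restricts to $\{[y_M]_M\}_{M\in\mathcal{M}}$; thus $\Theta_{\mathcal{NM}}$ is onto.

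Finally I would address continuity. The inverse limit topologies are the subspace topologies inherited from the products $\prod_{N\in\mathcal{N}}G/N$ and $\prod_{M\in\mathcal{M}}G/M$, so it suffices to check that each coordinate projection $\mathcal{K}_{\mathcal{M}}(G)\to G/M$ composed with $\Theta_{\mathcal{NM}}$ is continuous; but that composite is just the coordinate projection $\mathcal{K}_{\mathcal{N}}(G)\to G/M$ for $M\in\mathcal{M}\subset\mathcal{N}$, which is continuous by construction of the inverse limit. Since $\mathcal{K}_{\mathcal{N}}(G)$ is compact and $\mathcal{K}_{\mathcal{M}}(G)$ is Hausdorff, the continuous bijection $\Theta_{\mathcal{NM}}$ is automatically a homeomorphism. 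The only places where the hypotheses are genuinely used are the injectivity argument and the well-definedness of the extension, where cofinality and the filtered-from-below property are precisely what make the construction go through; this bookkeeping with transition maps is the step I would regard as the crux.
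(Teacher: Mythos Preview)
Your argument is correct and complete: well-definedness, injectivity via cofinality, surjectivity via the explicit extension using filtered-from-below plus the cocycle identity, continuity via the product/subspace topology, and the compact-to-Hausdorff upgrade are all handled properly.

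The paper, however, does not actually prove this proposition. It simply records the statement and points to Lemma~1.1.9 of \cite{RZ10}, remarking that the map $\Theta_{\mathcal{NM}}$ is the \emph{inverse} of the one constructed there (the ``forgetful'' map that drops the coordinates indexed by $\mathcal{N}\setminus\mathcal{M}$). So the difference is one of presentation rather than mathematics: you give a self-contained direct proof, while the paper outsources it to the standard reference. Your surjectivity step is precisely the construction of the inverse map alluded to in \cite{RZ10}, so the two treatments are really the same argument seen from opposite ends. The advantage of your version is that it makes the paper independent of the reference at this point; the advantage of the paper's version is brevity.
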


The topological isomorphism $\Theta_{ {\mathcal N}{\mathcal M}}$ in Proposition \ref{topisoiota} is the inverse of the map constructed in Lemma 1.1.9 in \cite{RZ10}; the map $\Theta_{{\mathcal N}{\mathcal M}}$ ``forgets'' those $[x_N]_N$ in the coherent tuple $\{ [x_N]_N\}_ {N\in{ {\mathcal N}}}$ for $N\in {\mathcal N}\setminus{\mathcal M}$. A direct consequence of Proposition \ref{topisoiota} is that the injectivity of the homomorphisms $\iota_N$ and $\iota_M$ are related when ${\mathcal M}$ is cofinal in ${\mathcal N}$.

\begin{corollary}\label{iotainjective} For nonempty collections ${\mathcal N}$ and ${\mathcal M}$ of finite index subgroups of $G$ filtered from below, suppose ${\mathcal M}$ is cofinal in ${\mathcal N}$. Then $\iota_{\mathcal M}$ is a monomorphism if and only if $\iota_{\mathcal N}$ is a monomorphism.
\end{corollary}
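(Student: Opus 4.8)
The cleanest route is to combine Proposition \ref{topisoiota} with the injectivity criterion recalled in Subsection 2.2, namely that $\iota_{\mathcal N}$ is a monomorphism precisely when $\bigcap_{N\in{\mathcal N}}N=\{0\}$ (and likewise for ${\mathcal M}$). First I would invoke the identity $\Theta_{{\mathcal N}{\mathcal M}}\,\iota_{\mathcal N}=\iota_{\mathcal M}$ from Proposition \ref{topisoiota}, where $\Theta_{{\mathcal N}{\mathcal M}}:{\mathcal K}_{\mathcal N}(G)\to{\mathcal K}_{\mathcal M}(G)$ is a topological isomorphism, hence in particular a bijection. From this factorization the equivalence is immediate: if $\iota_{\mathcal N}$ is injective then $\iota_{\mathcal M}=\Theta_{{\mathcal N}{\mathcal M}}\,\iota_{\mathcal N}$ is a composite of injections and therefore injective; conversely, if $\iota_{\mathcal M}$ is injective then $\iota_{\mathcal N}=\Theta_{{\mathcal N}{\mathcal M}}^{-1}\,\iota_{\mathcal M}$ is again a composite of injections.

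\textbf{Alternative route via the intersections.} If one prefers not to cite Proposition \ref{topisoiota}, the statement can be obtained directly from the characterization $\ker\iota_{\mathcal N}=\bigcap_{N\in{\mathcal N}}N$. Since ${\mathcal M}\subset{\mathcal N}$ (part of the definition of cofinality), we have $\bigcap_{N\in{\mathcal N}}N\subseteq\bigcap_{M\in{\mathcal M}}M$. For the reverse inclusion, fix $N\in{\mathcal N}$; by cofinality there is $M\in{\mathcal M}$ with $M\subseteq N$, whence $\bigcap_{M'\in{\mathcal M}}M'\subseteq M\subseteq N$, and as $N$ was arbitrary, $\bigcap_{M'\in{\mathcal M}}M'\subseteq\bigcap_{N\in{\mathcal N}}N$. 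Thus the two intersections coincide, so one equals $\{0\}$ if and only if the other does, which translates exactly to $\iota_{\mathcal M}$ being a monomorphism if and only if $\iota_{\mathcal N}$ is.

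\textbf{Expected obstacle.} There is essentially no obstacle here; the corollary is a formal consequence of Proposition \ref{topisoiota} (or of the kernel description of $\iota_{\mathcal N}$). The only point requiring a moment of care is making sure one uses both halves of the definition of cofinality — the inclusion ${\mathcal M}\subset{\mathcal N}$ for one containment of the intersections, and the ``refining'' condition that each $N\in{\mathcal N}$ contains some $M\in{\mathcal M}$ for the other — and, in the first route, recording that a topological isomorphism is in particular bijective so that its inverse is available.
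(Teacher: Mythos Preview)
Your proposal is correct and your first route is exactly the paper's intended argument: the paper states the corollary without proof, prefacing it only with the remark that it is ``a direct consequence of Proposition \ref{topisoiota},'' which is precisely your factorization $\iota_{\mathcal M}=\Theta_{{\mathcal N}{\mathcal M}}\,\iota_{\mathcal N}$ with $\Theta_{{\mathcal N}{\mathcal M}}$ bijective. Your alternative route via the equality $\bigcap_{N\in{\mathcal N}}N=\bigcap_{M\in{\mathcal M}}M$ is also valid and slightly more self-contained, but the paper does not take it.
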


A pro-${\mathcal C}$ topology on $G$ determined by ${\mathcal N}$ is the full pro-${\mathcal C}$ topology whenever ${\mathcal N}$ is cofinal in ${\mathcal N}_{\mathcal C}(G)$. This follows from Proposition \ref{topisoiota} and part (b) of Theorem \ref{fullproCtopology}. We present a proof of this that doesn't pass through the profinite completions.

\begin{proposition}\label{fullprofinite} Let ${\mathcal C}$ be a formation of finite abelian groups. If ${\mathcal N}$ is cofinal in ${\mathcal N}_{\mathcal C}(G)$, then the pro-${\mathcal C}$ topology on $G$ determined by ${\mathcal N}$ is the full pro-${\mathcal C}$ topology on $G$. 
\end{proposition}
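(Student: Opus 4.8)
The plan is to argue directly with the two chosen fundamental systems of neighbourhoods of $0$, never forming a profinite completion. A profinite topology on $G$ is translation invariant and is completely determined by its neighbourhood filter at $0$, so two such topologies on $G$ coincide precisely when each of the two collections of finite index subgroups \emph{refines} the other, i.e. every member of one collection contains some member of the other. Accordingly I would reduce the statement to verifying both refinement conditions between $\mathcal{N}$ and $\mathcal{N}_{\mathcal C}(G)$, each of which follows from one half of the definition of ``$\mathcal{N}$ is cofinal in $\mathcal{N}_{\mathcal C}(G)$''.

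First I would record the easy inclusion. By definition, cofinality of $\mathcal{N}$ in $\mathcal{N}_{\mathcal C}(G)$ includes $\mathcal{N}\subset\mathcal{N}_{\mathcal C}(G)$, so every $N\in\mathcal{N}$ is already a member of $\mathcal{N}_{\mathcal C}(G)$. Hence each basic neighbourhood $N$ of $0$ in the pro-$\mathcal{C}$ topology determined by $\mathcal{N}$ is also a basic neighbourhood of $0$ in the full pro-$\mathcal{C}$ topology. Translating this by invariance: if $U$ is open in the $\mathcal{N}$-topology and $x\in U$, pick $N\in\mathcal{N}$ with $x+N\subseteq U$; then $N\in\mathcal{N}_{\mathcal C}(G)$ witnesses that $U$ is a neighbourhood of $x$ in the full pro-$\mathcal{C}$ topology. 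Since $x\in U$ was arbitrary, $U$ is open in the full pro-$\mathcal{C}$ topology, so the full pro-$\mathcal{C}$ topology is at least as fine as the $\mathcal{N}$-topology.

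For the reverse comparison I would invoke the remaining clause of cofinality: for every $N\in\mathcal{N}_{\mathcal C}(G)$ there is $M\in\mathcal{N}$ with $N\preceq M$, i.e. $M\subseteq N$. So suppose $U$ is open in the full pro-$\mathcal{C}$ topology and $x\in U$; choose $N\in\mathcal{N}_{\mathcal C}(G)$ with $x+N\subseteq U$, then choose $M\in\mathcal{N}$ with $M\subseteq N$. Then $x+M\subseteq x+N\subseteq U$, so $U$ is a neighbourhood of $x$ in the $\mathcal{N}$-topology; as $x\in U$ was arbitrary, $U$ is $\mathcal{N}$-open. Combining the two inclusions, the pro-$\mathcal{C}$ topology determined by $\mathcal{N}$ and the full pro-$\mathcal{C}$ topology have exactly the same open sets, as claimed.

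There is no real obstacle here; the only point requiring care is bookkeeping — keeping straight which half of the definition of ``cofinal in'' yields which refinement, namely that the set-theoretic containment $\mathcal{N}\subset\mathcal{N}_{\mathcal C}(G)$ makes the full pro-$\mathcal{C}$ topology at least as fine as the $\mathcal{N}$-topology, while the ``every $N$ is refined by some $M\in\mathcal{N}$'' clause gives the opposite comparison — together with noting at the outset that a translation invariant topology is pinned down by any fundamental system of neighbourhoods of $0$, which is precisely what legitimizes reducing the equality of the two topologies to these two elementary checks.
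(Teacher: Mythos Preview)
Your proposal is correct and follows essentially the same approach as the paper's proof: both reduce, via translation invariance, to comparing neighbourhoods of $0$, then use the inclusion $\mathcal{N}\subset\mathcal{N}_{\mathcal C}(G)$ for one direction and the refinement clause of cofinality for the other. The only cosmetic difference is that the paper phrases the reduction as ``it suffices to consider open sets containing $0$'' while you state it in terms of refinement of neighbourhood bases, but the verification at each point $x\in U$ is identical.
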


\begin{proof} By translation invariance it suffices to show for a subset $U$ containing $0$ that $U$ is open with respect to the pro-${\mathcal C}$ topology determined by ${\mathcal N}$ if and only if $U$ is open with respect to the full pro-${\mathcal C}$ topology.

Suppose $U$ is an open subset of $G$ containing $0$ with respect to the pro-${\mathcal C}$ topology determined by ${\mathcal N}$. For each $x\in U$ there exists $N_x\in {\mathcal N}$ such that $x\in x+N_x\subset U$, where $x+N_x$ is the translation of $N_x$ by $x$. Since ${\mathcal N}$ is cofinal in ${\mathcal N}_{\mathcal C}(G)$, we have ${\mathcal N}\subset {\mathcal N}_{\mathcal C}(G)$, so that $N_x\in {\mathcal N}_{\mathcal C}(G)$ for all $x\in U$. Thus for every $x\in U$ there exists $N_x\in {\mathcal N}_{\mathcal C}(G)$ such that $x\in x+N_x\subset U$, and thus $U$ is open in the full pro-${\mathcal C}$ topology.

Now suppose that $U$ is an open subset of $G$ containing $0$ with respect to the full pro-${\mathcal C}$ topology. For each $x\in U$ there exist $F_x\in {\mathcal N}_{\mathcal C}(G)$ such that $x\in x+F_x\subset U$. Since ${\mathcal N}$ is cofinal in ${\mathcal N}_{\mathcal C}(G)$ there exist $N_x\in {\mathcal N}$ such that $0\in N_x\subset F_x$. This implies that $x\in x+N_x\subset x+F_x\subset U$. Thus for every $x\in U$ there exist $N_x\in {\mathcal N}$ such that $x\in x+N_x\subset U$, and thus $U$ is open in the pro-${\mathcal C}$ topology determined by ${\mathcal N}$.
\end{proof}

\begin{remark}\label{applyfullproC} The hypotheses for Theorem \ref{fullproCtopology} are that $G$ is residually ${\mathcal C}$, i.e., the map $\iota_{\mathcal C}:G\to {\mathcal K}_{\mathcal C}(G)$ is a monomorphism, and that the pro-${\mathcal C}$ topology on $G$ is the full pro-${\mathcal C}$ topology, i.e., determined by ${\mathcal N}_{\mathcal C}(G)$. For a pro-${\mathcal C}$ completion ${\mathcal K}_{\mathcal N}(G)$, if ${\mathcal N}$ is cofinal in ${\mathcal N}_{\mathcal C}(G)$, then it follows from Proposition \ref{fullprofinite} that the pro-${\mathcal C}$ topology determined on $G$ by ${\mathcal N}$ is the full pro-${\mathcal C}$ topology, and it follows from Corollary \ref{iotainjective} that $\iota_{\mathcal N}$ is a monomorphism, i.e., $G$ is residually ${\mathcal C}$, hence we can apply Theorem \ref{fullproCtopology} with ${\mathcal K}_{\mathcal C}(G)$ replaced by ${\mathcal K}_{\mathcal N}(G)$, ${\mathcal N}_{\mathcal C}(G)$ replaced with ${\mathcal N}$, and $\iota_{\mathcal C}$ replaced with $\iota_{\mathcal N}$.
\end{remark}

\section{The Profinite Completion of ${\mathbb Z}^n$}

\subsection{The Abelian Group ${\mathbb Z}^n$} We set $G$ to be free abelian group ${\mathbb Z}^n$ of rank $n\geq 2$. We represent the elements of ${\mathbb Z}^n$ as row vectors in terms of the standard basis $\{e_1,e_2,\dots,e_n\}$ where $e_1 = (1,0,\dots,0)$, $e_2=(0,1,\dots,0)$, and $e_n = (0,0,\dots,1)$, so that each $x\in {\mathbb Z}^n$ is expressed as
\[ x = x_1 e_1 + x_2 e_2 + \cdots + x_n e_n\]
for $x_1,x_2,\dots,x_n\in{\mathbb Z}$. 

\subsection{The Full Pro-${\mathcal C}$ Topology} For ${\mathcal C}$ fixed as the formation of all finite abelian groups, the full pro-${\mathcal C}$ topology on ${\mathbb Z}^n$ is determined by the nonempty collection of all finite index subgroups of ${\mathbb Z}^n$,
\[ {\mathcal N}_{\mathcal C}({\mathbb Z}^n) = \{N \leq_f {\mathbb Z}^n : {\mathbb Z}^n/N \in {\mathcal C}\} = \{N:N\leq_f {\mathbb Z}^n\}.\]
To simplify notation we denote ${\mathcal N}_{\mathcal C}({\mathbb Z}^n)$ by ${\mathcal F}$. The profinite completion of ${\mathbb Z}^n$ induced by ${\mathcal F}$ will be denoted by ${\mathcal K}_{\mathcal F}({\mathbb Z}^n)$ and the continuous homomorphism ${\mathbb Z}^n \to  {\mathcal K}_{\mathcal F}({\mathbb Z}^n)$ will be denoted by $\iota_{\mathcal F}$. 

To show that ${\mathbb Z}^n$ is residually ${\mathcal C}$ we use the collection
\[ {\mathcal D} = \{ d{\mathbb Z}^n: d\in {\mathbb N}\}\]
and Corollary \ref{iotainjective}. For each $d\in {\mathbb N}$ the subgroup $d{\mathbb Z}^n$ has the finite index $[{\mathbb Z}^n:d{\mathbb Z}^n] = d^n$. The collection ${\mathcal D}$ is filtered from below because for $d_1,d_2\in {\mathbb Z}$ the choice of $d_3 = {\rm lcm}(d_1,d_2)$ satisfies
\[ d_3{\mathbb Z}^n \subset d_1{\mathbb Z}^n \cap d_2{\mathbb Z}^n,\]
i.e., for $(d_3m_1,d_3m_2,\dots,d_3m_n)\in d_3{\mathbb Z}^n$ the relations $d_1\mid d_3$ and $d_2\mid d_3$ imply that $d_1\mid d_3 m_i$ and $d_2\mid d_3 m_i$ for all $i=1,2,\dots,n$. In proving that ${\mathcal D}$ is cofinal in ${\mathcal F}$ we make use of the result that states for a free abelian group $F$ of finite rank $n$ and a subgroup $H$ of rank $n$ there exists a basis $\{x_1,x_2,\dots,x_n\}$ of $F$ and positive integers $d_1,d_2,\dots,d_n$ satisfying $d_1\mid d_2\mid \cdots \mid d_n$ such that $\{ d_1x_1,d_2x_2,\dots,d_nx_n\}$ is a basis for the subgroup $H$ of $F$ (see Theorem 1.6 in \cite{Hu74}).

\begin{lemma}\label{kproduct} For each $n\geq 2$ the collection ${\mathcal D}$ is cofinal in ${\mathcal F}$.
\end{lemma}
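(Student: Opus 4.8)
The plan is to check the two clauses in the definition of cofinality directly. First I would note that $\mathcal{D}\subseteq\mathcal{F}$: for each $d\in{\mathbb N}$ the subgroup $d{\mathbb Z}^n$ has finite index $[{\mathbb Z}^n:d{\mathbb Z}^n]=d^n$ in ${\mathbb Z}^n$, as already observed, so $d{\mathbb Z}^n\in\mathcal{F}$. Since $\mathcal{D}$ was shown above to be filtered from below, it only remains to verify the cofinality condition: for every $N\in\mathcal{F}$ there exists $d\in{\mathbb N}$ with $d{\mathbb Z}^n\subseteq N$, i.e.\ $N\preceq d{\mathbb Z}^n$.

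So fix $N\leq_f{\mathbb Z}^n$. The first step is to observe that $N$ has rank $n$: if its rank were strictly less than $n$, the quotient ${\mathbb Z}^n/N$ would contain a free abelian subgroup of positive rank and hence be infinite, contradicting $[{\mathbb Z}^n:N]<\infty$. This puts us in position to apply Theorem 1.6 of \cite{Hu74} in the form stated in the excerpt: there is a basis $\{x_1,x_2,\dots,x_n\}$ of ${\mathbb Z}^n$ and positive integers $d_1\mid d_2\mid\cdots\mid d_n$ such that $\{d_1x_1,d_2x_2,\dots,d_nx_n\}$ is a basis of $N$.

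Now set $d=d_n$. I claim $d{\mathbb Z}^n\subseteq N$. Any $z\in{\mathbb Z}^n$ can be written $z=a_1x_1+\cdots+a_nx_n$ with $a_i\in{\mathbb Z}$, so
\[ dz=\sum_{i=1}^n (d/d_i)\,a_i\,(d_ix_i),\]
and since $d_i\mid d_n=d$ each coefficient $(d/d_i)a_i$ is an integer; hence $dz$ lies in the subgroup generated by $\{d_1x_1,\dots,d_nx_n\}$, namely $N$. Thus $d{\mathbb Z}^n\subseteq N$, i.e.\ $N\preceq d{\mathbb Z}^n$ with $d{\mathbb Z}^n\in\mathcal{D}$. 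As $N\in\mathcal{F}$ was arbitrary, $\mathcal{D}$ is cofinal in $\mathcal{F}$.

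There is no substantial obstacle here; the only point requiring a moment's care is the justification that a finite-index subgroup of ${\mathbb Z}^n$ has full rank, which must be in place before Theorem 1.6 of \cite{Hu74} can be invoked as stated. (One could also bypass the structure theorem altogether and take $d=[{\mathbb Z}^n:N]$, since then $d\cdot({\mathbb Z}^n/N)=0$ by Lagrange's theorem, forcing $d{\mathbb Z}^n\subseteq N$; the argument via \cite{Hu74} has the minor advantage of producing the smaller value $d=d_n$, the exponent of the finite group ${\mathbb Z}^n/N$.)
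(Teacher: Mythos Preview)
Your proof is correct and follows essentially the same route as the paper: invoke the stacked-basis form of Theorem~1.6 in \cite{Hu74} and take $d=d_n$ to get $d{\mathbb Z}^n\subseteq N$. Your version is in fact slightly more careful, since you explicitly justify that $N$ has full rank before applying the theorem, and your parenthetical Lagrange-theorem alternative is a nice addendum the paper does not mention.
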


\begin{proof} Clearly ${\mathcal D}\subset {\mathcal F}$. Let $N\in {\mathcal F}$. Since $N\leq_f {\mathbb Z}^n$, there exists a basis $\xi_1,\xi_2,\dots,\xi_n$ of ${\mathbb Z}^n$ and positive integers $d_1,d_2,\dots,d_n$ such that $d_1 \mid d_2 \mid \cdots \mid d_n$ and 
\[d_1\xi_1, d_2\xi_2,\dots, d_n\xi_n\]
is a basis of $N$ . Since $d_1\mid d_2\mid \cdots\mid d_n$, all of the elements $d_n\xi_1,d_n\xi_2,\dots,d_n\xi_n$ belong to $N$. Set $d=d_n$. Since $\xi_1,\xi_2,\dots,\xi_n$ is a basis for ${\mathbb Z}^n$, we have
\[ d(m_1\xi_1+m_2\xi_2+\cdots+m_n\xi_n)\in N\]
for all $m_1,m_2,\dots,m_n\in{\mathbb Z}$. This implies that $d{\mathbb Z}^n \subset N$ where $d{\mathbb Z}^n \in {\mathcal D}$. Thus for every $N\in {\mathcal F}$ there exists $d{\mathbb Z}^n\in {\mathcal D}$ such that $N\preceq d{\mathbb Z}^n$, making ${\mathcal D}$ cofinal in ${\mathcal F}$.
\end{proof}
 
By Lemma \ref{kproduct} and Proposition \ref{fullprofinite}, the cofinal collection ${\mathcal D}$ induces the full pro-${\mathcal C}$ topology on ${\mathbb Z}^n$. Furthermore, by Proposition \ref{topisoiota} the pro-${\mathcal C}$ completion ${\mathcal K}_{\mathcal D}({\mathbb Z}^n)$ is topologically isomorphic to ${\mathcal K}_{\mathcal C}({\mathbb Z}^n)$.

\begin{lemma}\label{residualD} For each $n\geq 2$, the map $\iota_{{\mathcal D}}:{\mathbb Z}^n \to {\mathcal K}_{ {\mathcal D}}({\mathbb Z}^n)$ is a monomorphism.
\end{lemma}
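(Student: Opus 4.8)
The plan is to show that $\iota_{\mathcal D}$ is injective by identifying its kernel with $\bigcap_{d\in{\mathbb N}} d{\mathbb Z}^n$, invoking the general principle recorded in Subsection 2.2 that $\iota_{\mathcal N}$ is a monomorphism if and only if $\bigcap_{N\in{\mathcal N}} N = \{0\}$. So the entire content reduces to the elementary fact that $\bigcap_{d\in{\mathbb N}} d{\mathbb Z}^n = \{0\}$.

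First I would recall the definition: $\iota_{\mathcal D}(x) = \{[x]_{d{\mathbb Z}^n}\}_{d\in{\mathbb N}}$, so $x\in\ker\iota_{\mathcal D}$ exactly when $x\in d{\mathbb Z}^n$ for every $d\in{\mathbb N}$. Writing $x = x_1e_1+\cdots+x_ne_n$ with $x_i\in{\mathbb Z}$, membership in $d{\mathbb Z}^n$ forces $d\mid x_i$ for each $i=1,\dots,n$. If some $x_i\neq 0$, then taking $d = |x_i|+1$ gives a contradiction since $|x_i|+1\nmid x_i$ for $x_i\neq 0$. Hence every $x_i=0$, so $x=0$, giving $\bigcap_{d\in{\mathbb N}} d{\mathbb Z}^n=\{0\}$ and therefore $\iota_{\mathcal D}$ is a monomorphism.

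Alternatively — and this is the route I would actually write, since it dovetails with the lemmas just proved — one can avoid re-deriving the kernel criterion from scratch by combining Lemma \ref{kproduct} with Corollary \ref{iotainjective}. By Lemma \ref{kproduct}, ${\mathcal D}$ is cofinal in ${\mathcal F}$, so by Corollary \ref{iotainjective} the map $\iota_{\mathcal D}$ is a monomorphism if and only if $\iota_{\mathcal F}$ is a monomorphism. Thus it suffices to check $\bigcap_{N\in{\mathcal F}} N = \{0\}$, and since ${\mathcal D}\subset{\mathcal F}$ this intersection is contained in $\bigcap_{d\in{\mathbb N}} d{\mathbb Z}^n$, which equals $\{0\}$ by the divisibility argument above. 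Either way the substantive step is the same.

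There is essentially no obstacle here: the only thing to be careful about is not to assume residuality of ${\mathbb Z}^n$ as already established (that is precisely what this lemma, together with the cofinality lemma, is meant to record), and to make sure the divisibility argument is phrased coordinatewise in the fixed basis $\{e_1,\dots,e_n\}$ introduced in Subsection 3.1. I expect the proof to be three or four lines.
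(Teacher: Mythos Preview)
Your first approach is exactly the paper's proof: show $\bigcap_{d\in{\mathbb N}} d{\mathbb Z}^n = \{0\}$ by the coordinatewise divisibility argument (the paper takes $d > |m_i|+1$ where you take $d = |x_i|+1$, an immaterial difference).

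Your preferred alternative route is correct but needlessly circuitous: you invoke Corollary~\ref{iotainjective} to reduce injectivity of $\iota_{\mathcal D}$ to injectivity of $\iota_{\mathcal F}$, and then prove the latter by passing back through ${\mathcal D}\subset{\mathcal F}$ to the same divisibility computation. In the paper's logical order this lemma is the input to Proposition~\ref{residual} (injectivity of $\iota_{\mathcal F}$), derived there via Corollary~\ref{iotainjective}, so writing your alternative would collapse the two statements and make the cofinality corollary do a round trip for no gain. Stick with your first, direct argument.
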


\begin{proof} Suppose
\[ m \in \bigcap_{d\in{\mathbb N}} d{\mathbb Z}^n.\]
Then $m = (m_1,\dots,m_n) \in d{\mathbb Z}^n$ for all $d\in{\mathbb N}$. In particular each $m_i$ is divisible by every $d\in{\mathbb N}$. If $m_i\ne 0$, there is $d > \vert m_i\vert + 1$ for which $m_i$ is not divisible by $d$. This means $m_i=0$, implying that $m=0$.
\end{proof}

\begin{proposition}\label{residual} For each $n\geq 2$, the map $\iota_{{\mathcal F}}:{\mathbb Z}^n \to {\mathcal K}_{{\mathcal F}}({\mathbb Z}^n)$ is a monomorphism.
\end{proposition}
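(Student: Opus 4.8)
The plan is to deduce this immediately from the results already assembled, since the full pro-$\mathcal{C}$ collection $\mathcal{F}$ contains the subcollection $\mathcal{D}$ for which residuality has already been established, and $\mathcal{D}$ is cofinal in $\mathcal{F}$. Concretely, I would invoke Corollary \ref{iotainjective} with $G = \mathbb{Z}^n$, $\mathcal{N} = \mathcal{F}$, and $\mathcal{M} = \mathcal{D}$. To apply it one checks three things: that $\mathcal{F}$ and $\mathcal{D}$ are nonempty collections of finite index subgroups of $\mathbb{Z}^n$ filtered from below ($\mathcal{F}$ is filtered from below because the intersection of two finite index subgroups of $\mathbb{Z}^n$ again has finite index, and $\mathcal{D}$ is filtered from below as noted preceding Lemma \ref{kproduct}), that $\mathcal{D}$ is cofinal in $\mathcal{F}$ (Lemma \ref{kproduct}), and that $\iota_{\mathcal{D}}$ is a monomorphism (Lemma \ref{residualD}). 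Corollary \ref{iotainjective} then yields at once that $\iota_{\mathcal{F}}$ is a monomorphism, which is the assertion.

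An equivalent and even more elementary route, bypassing Corollary \ref{iotainjective}, is to use the criterion recorded in Subsection 2.2 that $\iota_{\mathcal{N}}$ is a monomorphism precisely when $\bigcap_{N\in\mathcal{N}} N = \{0\}$. Since $\mathcal{D}\subseteq\mathcal{F}$, one has the containment $\bigcap_{N\in\mathcal{F}} N \subseteq \bigcap_{d\in\mathbb{N}} d\mathbb{Z}^n$, and the right-hand side is $\{0\}$ by the argument already given in the proof of Lemma \ref{residualD}: an integer divisible by every positive integer must vanish. Hence $\bigcap_{N\in\mathcal{F}} N = \{0\}$, and $\iota_{\mathcal{F}}$ is injective.

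There is essentially no obstacle here; the substantive content has already been absorbed into Lemmas \ref{kproduct} and \ref{residualD}. The only point requiring care is to confirm that the hypotheses of Corollary \ref{iotainjective} — in particular that $\mathcal{D}$ is genuinely cofinal in $\mathcal{F}$, and not merely contained in it — are exactly what Lemma \ref{kproduct} supplies, and that both collections are filtered from below. I expect the one-line application of Corollary \ref{iotainjective} to be the cleanest presentation, with the direct intersection argument offered as a self-contained alternative.
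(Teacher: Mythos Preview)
Your proposal is correct and matches the paper's proof essentially verbatim: the paper also applies Corollary \ref{iotainjective} together with Lemma \ref{kproduct} (cofinality of $\mathcal{D}$ in $\mathcal{F}$) and Lemma \ref{residualD} ($\iota_{\mathcal D}$ is a monomorphism). Your alternative direct intersection argument is a nice self-contained variant, though the paper does not include it.
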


\begin{proof} Since ${\mathcal D}$ is cofinal in ${\mathcal F}$ by Lemma \ref{kproduct} and $\iota_{\mathcal D}$ is a monomorphism by Lemma \ref{residualD}, it follows by Corollary \ref{iotainjective} that $\iota_{\mathcal F}$ is a monomorphism.
\end{proof}

\begin{corollary}\label{Znresfin} The pro-${\mathcal C}$ group ${\mathcal K}_{\mathcal F}({\mathbb Z}^n)$ is residually ${\mathcal C}$.
\end{corollary}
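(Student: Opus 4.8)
The plan is to recognize that this corollary unwinds to Proposition~\ref{residual} once the definition of ``residually ${\mathcal C}$'' is spelled out. Recall from Subsection 2.2 that for the full profinite topology — the one determined by ${\mathcal N}_{\mathcal C}(G)$ — the completion ${\mathcal K}_{\mathcal C}(G)$ is called residually ${\mathcal C}$ precisely when $\bigcap_{N\in {\mathcal N}_{\mathcal C}(G)} N = \{0\}$, and this was noted to be equivalent to the continuous homomorphism $\iota_{\mathcal C}$ being a monomorphism. In the present setting $G = {\mathbb Z}^n$ and ${\mathcal F} = {\mathcal N}_{\mathcal C}({\mathbb Z}^n)$, so ${\mathcal K}_{\mathcal F}({\mathbb Z}^n) = {\mathcal K}_{\mathcal C}({\mathbb Z}^n)$ and $\iota_{\mathcal F} = \iota_{\mathcal C}$ for ${\mathbb Z}^n$.

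The first step is therefore to invoke this equivalence: ${\mathcal K}_{\mathcal F}({\mathbb Z}^n)$ is residually ${\mathcal C}$ if and only if $\iota_{\mathcal F}\colon {\mathbb Z}^n \to {\mathcal K}_{\mathcal F}({\mathbb Z}^n)$ is injective (equivalently $\bigcap_{N\in {\mathcal F}} N = \{0\}$). The second and final step is to apply Proposition~\ref{residual}, which states exactly that $\iota_{\mathcal F}$ is a monomorphism; this completes the argument. There is no genuine obstacle: all the content already sits in Proposition~\ref{residual}, which itself rests on the cofinality of ${\mathcal D}$ in ${\mathcal F}$ (Lemma~\ref{kproduct}), on the elementary identity $\bigcap_{d\in{\mathbb N}} d{\mathbb Z}^n = \{0\}$ (Lemma~\ref{residualD}), and on Corollary~\ref{iotainjective}, which transports injectivity of $\iota_{\mathcal D}$ to injectivity of $\iota_{\mathcal F}$.

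If one preferred to avoid citing Proposition~\ref{residual} and argue directly, one could verify $\bigcap_{N\in{\mathcal F}} N = \{0\}$ by hand: given $m = (m_1,\dots,m_n)\in{\mathbb Z}^n$ with $m\neq 0$, choose a coordinate $m_i\neq 0$ and a prime $p > \vert m_i\vert$, and take $N = \{x\in{\mathbb Z}^n : p \mid x_i\}\in{\mathcal F}$; then $m\notin N$, so $m\notin\bigcap_{N\in{\mathcal F}} N$. Either way, the proof is a one-line consequence of material already established, and the cleanest presentation is simply to combine the definition of residually ${\mathcal C}$ with Proposition~\ref{residual}.
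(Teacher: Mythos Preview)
Your proposal is correct and matches the paper's approach: the corollary is stated immediately after Proposition~\ref{residual} with no separate proof, precisely because (as you note) ``residually ${\mathcal C}$'' was defined to mean that $\iota_{\mathcal C}=\iota_{\mathcal F}$ is a monomorphism, which is exactly what Proposition~\ref{residual} asserts. Your optional direct verification of $\bigcap_{N\in{\mathcal F}}N=\{0\}$ is extra detail but not needed.
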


\section{Strong BF-Equivalence}

\subsection{Dynamically Defined Pro-${\mathcal C}$ Topologies}

For similar hyperbolic $A,B\in{\rm GL}_n({\mathbb Z})$, all of the generalized Bowen-Franks $R$-modules, viewed as groups, naturally correspond to a pair of pro-${\mathcal C}$ topologies on ${\mathbb Z}^n$. This pair of pro-${\mathcal C}$ topologies is parameterized by the collection of polynomials
\[ {\mathcal Q}^{\rm a} = \{ g\in {\mathbb Z}[t] : {\rm det}(g(A))\ne 0\}.\]
The invertibility of $g(B)$ follows from the invertibility of $g(A)$ and the similarity of $A$ with $B$, and so the invertibility of $g(B)$ is not explicitly stated in the definition of ${\mathcal Q}^{\rm a}$. Set $N_{g,A} = {\mathbb Z}^n g(A)$, $N_{g,B} = {\mathbb Z}^n g(B)$, ${\mathcal N}_{ {\mathcal Q}^{\rm a},A} = \{ N_{g,A}: g\in {\mathcal Q}^{\rm a} \}$, and ${\mathcal N}_{ {\mathcal Q}^{\rm a},B} = \{ N_{g,B}: g\in {\mathcal Q}^{\rm a}\}$.

\begin{lemma}\label{Qall} For similar hyperbolic $A,B\in {\rm GL}_n({\mathbb Z})$, the collections ${\mathcal N}_{ {\mathcal Q}^{\rm a},A}$ and ${\mathcal N}_{ {\mathcal Q}^{\rm a},B}$ are pro-${\mathcal C}$ topologies on ${\mathbb Z}^n$.
\end{lemma}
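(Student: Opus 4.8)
The plan is to verify directly the defining conditions of a pro-${\mathcal C}$ topology from Subsection 2.1, with ${\mathcal C}$ fixed as the formation of all finite abelian groups: that ${\mathcal N}_{ {\mathcal Q}^{\rm a},A}$ is a nonempty collection of finite index subgroups of ${\mathbb Z}^n$, filtered from below, with every quotient ${\mathbb Z}^n/N_{g,A}$ in ${\mathcal C}$. Since ${\mathcal C}$ consists of \emph{all} finite abelian groups, the last requirement amounts to $N_{g,A}$ having finite index, so only two points really need checking. I would argue for $A$; the argument for $B$ is identical once one records, as the text already notes, that ${\mathcal Q}^{\rm a}$ is the same set whether defined through $A$ or through $B$ --- similarity over ${\mathbb Q}$ gives ${\rm det}(g(A)) = {\rm det}(g(B))$ for all $g\in {\mathbb Z}[t]$ --- so that each $N_{g,B}$, $g\in{\mathcal Q}^{\rm a}$, is likewise a well-defined finite index subgroup.

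Nonemptiness is immediate: the constant polynomial $g(t)=1$ has ${\rm det}(g(A)) = 1\ne 0$, so $g\in{\mathcal Q}^{\rm a}$ (for that matter each $g_k$, $k\in{\mathbb N}$, lies in ${\mathcal Q}^{\rm a}$ by hyperbolicity of $A$). Finite index: for $g\in{\mathcal Q}^{\rm a}$ the matrix $g(A)$ is a ${\mathbb Z}$-linear endomorphism of ${\mathbb Z}^n$ with nonzero determinant, hence injective, so $N_{g,A} = {\mathbb Z}^n g(A)$ has rank $n$ and $[{\mathbb Z}^n:N_{g,A}] = \vert{\rm det}(g(A))\vert < \infty$; thus ${\mathbb Z}^n/N_{g,A}$ is a finite abelian group, an element of ${\mathcal C}$.

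The only step requiring any thought is the filtered from below property. Given $g_1,g_2\in{\mathcal Q}^{\rm a}$, I would take $g_3 = g_1 g_2$: then ${\rm det}(g_3(A)) = {\rm det}(g_1(A)){\rm det}(g_2(A))\ne 0$, so $g_3\in{\mathcal Q}^{\rm a}$, and since ${\mathbb Z}[A]$ is commutative, $g_3(A) = g_1(A)g_2(A) = g_2(A)g_1(A)$. Using ${\mathbb Z}^n g_i(A)\subseteq{\mathbb Z}^n$ for $i=1,2$,
\[ N_{g_3,A} = {\mathbb Z}^n g_1(A)g_2(A)\subseteq {\mathbb Z}^n g_2(A) = N_{g_2,A}, \]
and symmetrically $N_{g_3,A} = {\mathbb Z}^n g_2(A)g_1(A)\subseteq N_{g_1,A}$, so $N_{g_3,A}\subseteq N_{g_1,A}\cap N_{g_2,A}$, as required. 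The one subtlety is bookkeeping: elements of ${\mathbb Z}^n$ are row vectors and $g(A)$ acts on the right, so the matrices must be multiplied on the right throughout, and it is exactly the commutativity of ${\mathbb Z}[A]$ that makes both containments hold for the single choice $g_3 = g_1 g_2$. With these three facts in place, ${\mathcal N}_{ {\mathcal Q}^{\rm a},A}$ is a nonempty filtered from below collection of finite index subgroups of ${\mathbb Z}^n$ with all quotients in ${\mathcal C}$, i.e., it is a pro-${\mathcal C}$ topology on ${\mathbb Z}^n$; the same holds for ${\mathcal N}_{ {\mathcal Q}^{\rm a},B}$. I note that hyperbolicity is not really used here and enters only later when ${\mathcal Q}^{\rm p}$ is compared with ${\mathcal Q}^{\rm a}$.
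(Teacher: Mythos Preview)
Your proof is correct and follows essentially the same approach as the paper: both verify finite index from $\det(g(A))\ne 0$ (the paper invokes the Smith normal form, you use the determinant directly) and both establish the filtered-from-below property by taking $g_3 = g_1 g_2$ and using the commutativity of ${\mathbb Z}[A]$. Your additional remarks on nonemptiness and on hyperbolicity being unnecessary here are accurate and only make the argument more explicit.
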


\begin{proof} For each $g\in {\mathcal Q}^{\rm a}$, the invertibility of $g(A),g(B)\in M_n({\mathbb Z})$ implies by their Smith normal form that $N_{g,A}$ and $N_{g,B}$ are finite index subgroups of ${\mathbb Z}^n$. The filtered from below property follows from ${\mathcal Q}^{\rm a}$ being a multiplicative set, i.e., closed under multiplication. For $g_1,g_2\in {\mathcal Q}^{\rm a}$ there holds $g_1,g_2\in {\mathbb Z}[t]$ with $g_1(A)$ and $g_2(A)$ invertible, so that setting $g_3 = g_1g_2$ there holds $g_3\in {\mathbb Z}[t]$ with $g_3(A)$ invertible, hence $g_3\in {\mathcal Q}^{\rm a}$. If $g_1,g_2\in {\mathcal Q}^{\rm a}$ then for $g_3 = g_1g_2\in {\mathcal Q}^{\rm a}$ and $m\in {\mathbb Z}^n g_3(A)$ there is $z\in {\mathbb Z}^n$ such that $m = z g_3(A)$, hence there holds $m = (zg_2(A))g_1(A)$ and $m = (zg_1(A))g_2(A)$, implying that $N_{g_3,A} \subset N_{g_1,A}\cap N_{g_2,A}$. A similar argument gives $N_{g_3,B}\subset N_{g_1,B} \cap N_{g_2,B}$.
\end{proof}

For similar hyperbolic $A,B\in {\rm GL}_n({\mathbb Z})$, the principal generalized Bowen-Franks $R$-modules, viewed as groups, naturally correspond to another pair of pro-${\mathcal C}$ topologies on ${\mathbb Z}^n$. This pair of pro-${\mathcal C}$ topologies is parameterized by the collection of polynomials
\[ {\mathcal Q}^{\rm p} = \{ g_k(t) = t^k-1: k\in {\mathbb N}\}.\]
To simplify notation, set $N_{k,A} = N_{g_k,A}$ and $N_{k,B} = N_{g_k,B}$. Then ${\mathcal N}_{ {\mathcal Q}^{\rm p},A} = \{ N_{k,A}: k\in {\mathbb N}\}$ and ${\mathcal N}_{ {\mathcal Q}^{\rm p},B} = \{ N_{k,B} : k\in {\mathbb N}\}$.

\begin{lemma}\label{Qprin} For similar hyperbolic $A,B\in {\rm GL}_n({\mathbb Z})$, there holds ${\mathcal Q}^{\rm p} \subset {\mathcal Q}^{\rm a}$, and the collections ${\mathcal N}_{ {\mathcal Q}^{\rm p},A}$ and ${\mathcal N}_{ {\mathcal Q}^{\rm p},B}$ are pro-${\mathcal C}$ topologies on ${\mathbb Z}^n$.
\end{lemma}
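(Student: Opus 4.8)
The plan is to reduce everything to Lemma \ref{Qall}. First I would establish the inclusion ${\mathcal Q}^{\rm p} \subset {\mathcal Q}^{\rm a}$: for $g_k(t) = t^k - 1$ and $k\in{\mathbb N}$, I need ${\rm det}(g_k(A)) = {\rm det}(A^k - I) \ne 0$. Since $A$ is hyperbolic, no eigenvalue $\lambda$ of $A$ has modulus $1$, so no $\lambda$ satisfies $\lambda^k = 1$; hence $1$ is not an eigenvalue of $A^k$, i.e. $A^k - I$ is invertible over ${\mathbb Q}$ and ${\rm det}(A^k - I)\ne 0$. Therefore $g_k \in {\mathcal Q}^{\rm a}$ for every $k\in{\mathbb N}$, giving ${\mathcal Q}^{\rm p}\subset {\mathcal Q}^{\rm a}$, and correspondingly ${\mathcal N}_{{\mathcal Q}^{\rm p},A}\subset {\mathcal N}_{{\mathcal Q}^{\rm a},A}$ and ${\mathcal N}_{{\mathcal Q}^{\rm p},B}\subset {\mathcal N}_{{\mathcal Q}^{\rm a},B}$.

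Next I would verify that ${\mathcal N}_{{\mathcal Q}^{\rm p},A}$ and ${\mathcal N}_{{\mathcal Q}^{\rm p},B}$ genuinely determine pro-${\mathcal C}$ topologies, i.e. that they are nonempty collections of finite index subgroups of ${\mathbb Z}^n$ that are filtered from below. Nonemptiness is immediate ($k=1$ gives $N_{1,A} = {\mathbb Z}^n(A-I)$). Each $N_{k,A}$ has finite index: by the inclusion just proved, $g_k(A)$ is invertible over ${\mathbb Q}$, so its Smith normal form over ${\mathbb Z}$ has nonzero diagonal entries and $[{\mathbb Z}^n : N_{k,A}] = |{\rm det}(g_k(A))| < \infty$; since $A,B$ are similar over ${\mathbb Q}$, $g_k(B)$ is likewise invertible and $N_{k,B}$ has finite index. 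Since ${\mathbb Z}^n/N_{k,A}$ is a finite abelian group and ${\mathcal C}$ is the formation of all finite abelian groups, we have ${\mathbb Z}^n/N_{k,A}\in {\mathcal C}$.

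For the filtered-from-below property, given $g_{k_1}, g_{k_2}\in {\mathcal Q}^{\rm p}$ I would take $k_3 = {\rm lcm}(k_1,k_2)$ (or just $k_1 k_2$) and note that $t^{k_1}-1$ and $t^{k_2}-1$ each divide $t^{k_3}-1$ in ${\mathbb Z}[t]$, since $k_1 \mid k_3$ and $k_2\mid k_3$. Writing $g_{k_3} = h_1 g_{k_1} = h_2 g_{k_2}$ with $h_1, h_2\in{\mathbb Z}[t]$, any $m\in N_{k_3,A} = {\mathbb Z}^n g_{k_3}(A)$ has the form $m = z\,g_{k_3}(A)$ for some $z\in{\mathbb Z}^n$, whence $m = (z\,h_1(A))\,g_{k_1}(A) \in N_{k_1,A}$ and $m = (z\,h_2(A))\,g_{k_2}(A)\in N_{k_2,A}$, so $N_{k_3,A}\subset N_{k_1,A}\cap N_{k_2,A}$; the identical argument with $B$ in place of $A$ handles ${\mathcal N}_{{\mathcal Q}^{\rm p},B}$. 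This is essentially the divisibility observation that $g_{k_1}, g_{k_2} \mid g_{k_3}$ in ${\mathbb Z}[t]$, which is the only mildly nonroutine point — and it is the polynomial analogue of the ${\rm lcm}$ argument already used for ${\mathcal D}$ in Lemma \ref{kproduct}. Alternatively, one can dispense with this entirely by observing that ${\mathcal Q}^{\rm p}$ is contained in the multiplicative set ${\mathcal Q}^{\rm a}$ and that ${\mathcal N}_{{\mathcal Q}^{\rm p},A}$, as a subcollection, inherits the filtered-from-below property once one checks $g_{k_1}g_{k_2}$ lies below both — but since $g_{k_1}g_{k_2}$ need not be of the form $t^k-1$, the direct ${\rm lcm}$ argument is cleaner and I would use that. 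I do not expect any real obstacle here; the work is a mild adaptation of Lemma \ref{Qall}.
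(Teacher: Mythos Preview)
Your proposal is correct and follows essentially the same route as the paper: establish ${\mathcal Q}^{\rm p}\subset{\mathcal Q}^{\rm a}$ via hyperbolicity, use the Smith normal form for finite index, and handle filtered-from-below by choosing $k_3$ a common multiple of $k_1,k_2$ and using the divisibility $g_{k_i}\mid g_{k_3}$ in ${\mathbb Z}[t]$ (the paper takes $k_3=k_1k_2$ and cites \cite{BM12} for the factorization, while you spell out the eigenvalue and divisibility arguments directly). There is no substantive difference.
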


\begin{proof} For each $k\in {\mathbb N}$ there holds $g_k\in{\mathbb Z}[t]$ and the hyperbolicity of $A$ implies that $g_k(A)$ is invertible. Thus ${\mathcal Q}^{\rm p} \subset {\mathcal Q}^{\rm a}$. For each $k\in {\mathbb N}$, the similarity of $A$ and $B$ implies the invertibility of $g_k(B)$. By the Smith normal forms of $g(A)$ and $g(B)$, the subgroups $N_{k,A}$ and $N_{k,B}$ have finite index in ${\mathbb Z}^n$. The filtered from below property for ${\mathcal N}_{ {\mathcal Q}^{\rm p},A}$ follows by (Lemma 2.2 in \cite{BM12}) wherein we showed that for $k_1,k_2\in {\mathbb N}$, the choice of $k_3 = k_1k_2$ leads to $g_{k_3} = q_1g_{k_1}$ and $g_{k_3} = q_2g_{k_2}$ for $q_1,q_2\in {\mathbb Z}[t]$, so that for $m\in {\mathbb Z}^n g_{k_3}(A)$ there exists $z\in {\mathbb Z}^n$ such that $m = zg_{k_3}(A)$, whence $m = zq_1(A)g_{k_1}(A) \in {\mathbb Z}^n g_{k_1}(A)$ and $m = z q_2(A)g_{k_2}(A) \in {\mathbb Z}^n g_{k_2}(A)$, implying that $N_{k_3,A} \subset N_{k_1,A}\cap N_{k_2,A}$. A similar argument gives $N_{k_3,B} \subset N_{k_1,B}\cap N_{k_2,B}$.
\end{proof}

For similar hyperbolic $A,B\in {\rm GL}_n({\mathbb Z})$, we show that ${\mathcal N}_{ {\mathcal Q}^{\rm p},A}$ and ${\mathcal N}_{ {\mathcal Q}^{\rm p},B}$ are each cofinal in ${\mathcal F}$, that ${\mathcal N}_{ {\mathcal Q}^{\rm p},A}$ is cofinal in ${\mathcal N}_{ {\mathcal Q}^{\rm a},A}$ and  ${\mathcal N}_{ {\mathcal Q}^{\rm p},B}$ is cofinal in ${\mathcal N}_{ {\mathcal Q}^{\rm a},B}$. We only state the results for $A$, as those for $B$ are obtained by replacing $A$ with $B$.

\begin{lemma}\label{dksubset} Suppose $A\in {\rm GL}_n({\mathbb Z})$ is hyperbolic. For every $d\in {\mathbb N}$ there exists $k\in{\mathbb N}$ such that $N_{k,A}\subset d{\mathbb Z}^n$.
\end{lemma}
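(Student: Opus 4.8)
The plan is to pass to the reduction of $A$ modulo $d$ and exploit the finiteness of $\mathrm{GL}_n(\mathbb{Z}/d\mathbb{Z})$. First I would observe that since $A\in\mathrm{GL}_n(\mathbb{Z})$ we have $\det A=\pm 1$, so its entrywise reduction $\bar A$ modulo $d$ has determinant a unit in $\mathbb{Z}/d\mathbb{Z}$ and hence lies in the \emph{finite} group $\mathrm{GL}_n(\mathbb{Z}/d\mathbb{Z})$; hyperbolicity plays no role here. Because that group is finite, $\bar A$ has finite multiplicative order, say $k\in\mathbb{N}$, so that $\bar A^{\,k}=\bar I$ in $M_n(\mathbb{Z}/d\mathbb{Z})$.

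Next I would translate $\bar A^{\,k}=\bar I$ back to $M_n(\mathbb{Z})$: it says precisely that every entry of $A^k-I$ is divisible by $d$, i.e. $A^k-I\in d\,M_n(\mathbb{Z})$. Then for any row vector $z\in\mathbb{Z}^n$ the product $z(A^k-I)$ has all coordinates divisible by $d$, hence $z(A^k-I)\in d\mathbb{Z}^n$. This gives $N_{k,A}=\mathbb{Z}^n g_k(A)=\mathbb{Z}^n(A^k-I)\subset d\mathbb{Z}^n$, which is the assertion.

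There is essentially no obstacle; the only point to keep straight is the direction of the matrix action (row vectors acted on the right), but since $d\mathbb{Z}^n=dM_n(\mathbb{Z})$-translates of $\mathbb{Z}^n$ is insensitive to the side of multiplication, the conclusion follows either way. One could even record slightly more, which will be handy for the cofinality statements that follow: the set of $k\in\mathbb{N}$ with $N_{k,A}\subset d\mathbb{Z}^n$ is exactly the set of positive multiples of the order of $\bar A$ in $\mathrm{GL}_n(\mathbb{Z}/d\mathbb{Z})$.
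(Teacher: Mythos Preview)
Your argument is correct. Both your proof and the paper's reach the same conclusion that $A^k\equiv I\pmod d$ for some $k$, but the routes differ in flavour. The paper works on the torus side: it considers the finite $d$-torsion subgroup $T_d=\{x\in\mathbb{T}^n:dx=0\}$, observes that $A$ permutes this finite set so every point is periodic, takes $k$ to be the lcm of these periods, and then evaluates $(A^k-I)$ on the standard generators $(0,\dots,1/d,\dots,0)^{\mathrm T}$ to conclude divisibility of the entries by $d$. Your argument works directly on the matrix ring side, using the finiteness of $\mathrm{GL}_n(\mathbb{Z}/d\mathbb{Z})$ to get a $k$ with $\bar A^{\,k}=\bar I$. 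These are Pontryagin-dual formulations of the same finiteness, and the $k$ produced is the same integer (the order of $\bar A$ equals the lcm of the periods on $T_d$). Your version is a bit more elementary and, as you note, makes transparent that hyperbolicity is not used for the containment itself; the paper invokes hyperbolicity in the language of periodic points, though in fact only the invertibility of $A$ over $\mathbb{Z}$ is needed there as well. Your final remark characterising all admissible $k$ as the multiples of the order of $\bar A$ is also correct.
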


\begin{proof} For fixed but arbitrary $d\in{\mathbb N}$, consider the finite subset of ${\mathbb T}^n$ given by
\[ T_d = \{ x\in {\mathbb T}^n: dx = 0\}.\]
This finite set consists entirely of periodic points of the left action of the hyperbolic toral automorphism $A$ of ${\mathbb T}^n$. Let $k$ be the least common multiple of the periods of $x\in T_d$ under the left action of $A$. Then $(A^k-I)x = 0$ for all $x\in T_d$, so that $T_d\subset {\rm Per}_k(A)$. Each rational point of the form $(0,\dots,0,1/d,0,\dots,0)^{\rm T}$ belongs to $T_d$ and satisfies
\[ (A^k-I)(0,\dots,0,1/d,0,\dots,0)^{\rm T} = 0\]
because $T_d\subset {\rm Per}_k(A)$. This implies that each entry of $A^k-I$ is divisible by $d$. Thus ${\mathbb Z}^n(A^k-I)\subset d{\mathbb Z}^n$, i.e., $N_{k,A}\subset d{\mathbb Z}^n$.
\end{proof}

\begin{proposition}\label{cofinal} Suppose $A\in {\rm GL}_n({\mathbb Z})$ is hyperbolic. Then ${\mathcal N}_{ {\mathcal Q}^{\rm p},A}$ is cofinal in ${\mathcal N}_{ {\mathcal Q}^{\rm a},A}$, and ${\mathcal N}_{ {\mathcal Q}^{\rm p},A}$ is cofinal in ${\mathcal F}$.
\end{proposition}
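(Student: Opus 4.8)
The plan is to deduce both cofinality claims purely by chaining together the lemmas already established, routing everything through the collection $\mathcal{D} = \{ d\mathbb{Z}^n : d\in\mathbb{N}\}$. I would prove the second assertion first, that $\mathcal{N}_{ {\mathcal Q}^{\rm p},A}$ is cofinal in $\mathcal{F}$, and then derive the first assertion from it.

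For the cofinality of $\mathcal{N}_{ {\mathcal Q}^{\rm p},A}$ in $\mathcal{F}$: the inclusion $\mathcal{N}_{ {\mathcal Q}^{\rm p},A}\subset\mathcal{F}$ is immediate, since by Lemma \ref{Qprin} each $N_{k,A}=\mathbb{Z}^n(A^k-I)$ is a finite-index subgroup of $\mathbb{Z}^n$ (via its Smith normal form) and so lies in $\mathcal{F}$. For the directedness condition, I would take an arbitrary $N\in\mathcal{F}$; by Lemma \ref{kproduct} the collection $\mathcal{D}$ is cofinal in $\mathcal{F}$, so there exists $d\in\mathbb{N}$ with $d\mathbb{Z}^n\subset N$, and then by Lemma \ref{dksubset} applied to this $d$ there exists $k\in\mathbb{N}$ with $N_{k,A}\subset d\mathbb{Z}^n$. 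Composing the two inclusions gives $N_{k,A}\subset d\mathbb{Z}^n\subset N$, i.e. $N\preceq N_{k,A}$, which is exactly what cofinality demands.

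For the cofinality of $\mathcal{N}_{ {\mathcal Q}^{\rm p},A}$ in $\mathcal{N}_{ {\mathcal Q}^{\rm a},A}$: the inclusion $\mathcal{N}_{ {\mathcal Q}^{\rm p},A}\subset\mathcal{N}_{ {\mathcal Q}^{\rm a},A}$ follows from $\mathcal{Q}^{\rm p}\subset\mathcal{Q}^{\rm a}$, established in Lemma \ref{Qprin}. Then, given an arbitrary $N_{g,A}\in\mathcal{N}_{ {\mathcal Q}^{\rm a},A}$, the invertibility of $g(A)$ makes $N_{g,A}=\mathbb{Z}^n g(A)$ a finite-index subgroup of $\mathbb{Z}^n$ (Lemma \ref{Qall}), hence $N_{g,A}\in\mathcal{F}$; applying the already-proven cofinality of $\mathcal{N}_{ {\mathcal Q}^{\rm p},A}$ in $\mathcal{F}$ to this element yields $k\in\mathbb{N}$ with $N_{k,A}\subset N_{g,A}$, finishing the verification.

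I do not expect a genuine obstacle here: the proof is essentially transitivity of inclusions together with bookkeeping. The only substantive input is Lemma \ref{dksubset}, whose proof carries the dynamical content — it exploits that the $d$-torsion points of $\mathbb{T}^n$ are periodic for the hyperbolic $A$, so that a single power $A^k-I$ annihilates all of them and is therefore entrywise divisible by $d$. Once that lemma is in place, the present proposition needs no appeal to the profinite completions at all.
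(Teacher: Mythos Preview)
Your proof is correct and follows essentially the same route as the paper: both arguments chain Lemma~\ref{kproduct} (cofinality of $\mathcal{D}$ in $\mathcal{F}$) with Lemma~\ref{dksubset} to produce $N_{k,A}\subset d\mathbb{Z}^n\subset N$, using Lemmas~\ref{Qprin} and~\ref{Qall} for the requisite inclusions. The only cosmetic difference is that the paper treats the two cofinality claims simultaneously (noting that in either case $N\leq_f\mathbb{Z}^n$), whereas you prove cofinality in $\mathcal{F}$ first and then reduce the $\mathcal{Q}^{\rm a}$ case to it.
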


\begin{proof} By Lemma \ref{Qprin} there holds ${\mathcal Q}^{\rm p}\subset {\mathcal Q}^{\rm a}$ which implies ${\mathcal N}_{{\mathcal Q}^{\rm p},A}\subset {\mathcal N}_{ {\mathcal Q}^{\rm a},A}$. Furthermore, by Lemma \ref{Qprin} there holds ${\mathcal N}_{{\mathcal Q}^{\rm p},A} \subset {\mathcal F}$. For $N\in {\mathcal N}_{ {\mathcal Q}^{\rm a},A}$ there holds $N\leq_f {\mathbb Z}^n$ by Lemma \ref{Qall}. For $N\in {\mathcal F}$ there holds $N\leq_f {\mathbb Z}^n$. In either case we have $N\leq_f {\mathbb Z}^n$. By Lemma \ref{kproduct} there is a $d\in{\mathbb N}$ such that $d{\mathbb Z}^n\subset N$. Associated to this $d\in {\mathbb N}$ there is by Lemma \ref{dksubset} a $k\in{\mathbb N}$ such that $N_{k,A}\subset d{\mathbb Z}^n$. Hence $N_{k,A} \subset d{\mathbb Z}^n \subset N$, i.e., $N\preceq N_{k,A}$.
\end{proof}

\begin{corollary}\label{GAisoZnbar} If $A\in {\rm GL}_n({\mathbb Z})$ is hyperbolic, then
\[ {\mathcal K}_{ {\mathcal N}_{ {\mathcal Q}^{\rm p},A} }({\mathbb Z}^n) =  \varprojlim_{ k\in {\mathbb N}} {\mathbb Z}^n/N_{k,A}\]
is topologically isomorphic to ${\mathcal K}_{\mathcal F}({\mathbb Z}^n)$ and to
\[ {\mathcal K}_{ {\mathcal N}_{ {\mathcal Q}^{\rm a},A}} ( {\mathbb Z}^n) = \varprojlim_{g\in {\mathcal Q}^{\rm a}} {\mathbb Z}^n/ N_{g,A}.\]
\end{corollary}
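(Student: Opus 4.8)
The plan is to derive both topological isomorphisms directly from Proposition \ref{topisoiota}, fed with the two cofinality statements of Proposition \ref{cofinal}. First I would observe that the two displayed equalities in the statement are nothing more than the definitions of profinite completions unwound: by Subsection 2.2, $\mathcal{K}_{\mathcal{N}_{\mathcal{Q}^{\rm p},A}}(\mathbb{Z}^n)$ is the inverse limit $\varprojlim \mathbb{Z}^n/N$ over $N\in\mathcal{N}_{\mathcal{Q}^{\rm p},A}$, and since $\mathcal{N}_{\mathcal{Q}^{\rm p},A}=\{N_{k,A}:k\in\mathbb{N}\}$ this is $\varprojlim_{k\in\mathbb{N}}\mathbb{Z}^n/N_{k,A}$; likewise $\mathcal{K}_{\mathcal{N}_{\mathcal{Q}^{\rm a},A}}(\mathbb{Z}^n)=\varprojlim_{g\in\mathcal{Q}^{\rm a}}\mathbb{Z}^n/N_{g,A}$ using $\mathcal{N}_{\mathcal{Q}^{\rm a},A}=\{N_{g,A}:g\in\mathcal{Q}^{\rm a}\}$. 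By Lemmas \ref{Qall} and \ref{Qprin} the three collections $\mathcal{N}_{\mathcal{Q}^{\rm p},A}$, $\mathcal{N}_{\mathcal{Q}^{\rm a},A}$, and $\mathcal{F}$ are each nonempty collections of finite index subgroups of $\mathbb{Z}^n$ filtered from below, so Proposition \ref{topisoiota} is applicable to any cofinal pair among them.

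Next I would invoke Proposition \ref{cofinal}, which gives that $\mathcal{N}_{\mathcal{Q}^{\rm p},A}$ is cofinal in $\mathcal{N}_{\mathcal{Q}^{\rm a},A}$ and that $\mathcal{N}_{\mathcal{Q}^{\rm p},A}$ is cofinal in $\mathcal{F}$. Applying Proposition \ref{topisoiota} to the first cofinal pair, with $\mathcal{N}=\mathcal{N}_{\mathcal{Q}^{\rm a},A}$ and $\mathcal{M}=\mathcal{N}_{\mathcal{Q}^{\rm p},A}$, produces a topological isomorphism $\Theta:\mathcal{K}_{\mathcal{N}_{\mathcal{Q}^{\rm a},A}}(\mathbb{Z}^n)\to\mathcal{K}_{\mathcal{N}_{\mathcal{Q}^{\rm p},A}}(\mathbb{Z}^n)$, and applying it to the second cofinal pair, with $\mathcal{N}=\mathcal{F}$ and $\mathcal{M}=\mathcal{N}_{\mathcal{Q}^{\rm p},A}$, produces a topological isomorphism $\Theta':\mathcal{K}_{\mathcal F}(\mathbb{Z}^n)\to\mathcal{K}_{\mathcal{N}_{\mathcal{Q}^{\rm p},A}}(\mathbb{Z}^n)$. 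Hence $\mathcal{K}_{\mathcal{N}_{\mathcal{Q}^{\rm p},A}}(\mathbb{Z}^n)$ is topologically isomorphic to $\mathcal{K}_{\mathcal F}(\mathbb{Z}^n)$ via $(\Theta')^{-1}$ and to $\mathcal{K}_{\mathcal{N}_{\mathcal{Q}^{\rm a},A}}(\mathbb{Z}^n)$ via $\Theta^{-1}$, which is exactly the assertion of the corollary.

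I do not expect any real obstacle here: the corollary is a bookkeeping consequence of Propositions \ref{cofinal} and \ref{topisoiota}, with all the substantive work — establishing cofinality, which rests ultimately on Lemma \ref{dksubset} where periodic points of the hyperbolic $A$ force divisibility of the entries of $A^k-I$ — already carried out. The only points warranting care are the verification that the relevant collections are genuinely filtered from below (supplied by Lemmas \ref{Qall} and \ref{Qprin}) and the correct matching of the roles of $\mathcal{N}$ and $\mathcal{M}$ in Proposition \ref{topisoiota} so that the $\Theta$-maps point in the stated directions; composing and inverting these maps then yields the claimed three-way topological isomorphism.
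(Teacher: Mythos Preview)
Your proposal is correct and matches the paper's approach exactly: the paper's proof of this corollary is the single sentence ``Apply Proposition \ref{cofinal} and Proposition \ref{topisoiota},'' and your write-up simply unpacks that sentence with the appropriate identifications of $\mathcal{N}$ and $\mathcal{M}$.
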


\begin{proof} Apply Proposition \ref{cofinal} and Proposition \ref{topisoiota}.
\end{proof}

By Corollary \ref{GAisoZnbar}, applied to similar hyperbolic $A,B\in {\rm GL}_n({\mathbb Z})$, there are topological isomorphisms 
\[ {\mathcal K}_{ {\mathcal N}_{ {\mathcal Q}^{\rm a},A}}({\mathbb Z}^n) \cong {\mathcal K}_{ {\mathcal N}_ { {\mathcal Q}^{\rm p},A}}({\mathbb Z}^n) \cong {\mathcal K}_{\mathcal F}({\mathbb Z}^n) \cong {\mathcal K}_{ {\mathcal N}_{ {\mathcal Q}^{\rm p}, B}}({\mathbb Z}^n) \cong {\mathcal K}_{ {\mathcal N}_{ {\mathcal Q}^{\rm a},B} }({\mathbb Z}^n).\]
Thus, for similar hyperbolic $A,B \in {\rm GL}_n({\mathbb Z})$, the pro-${\mathcal C}$ completions ${\mathcal K}_{ {\mathcal N}_{ {\mathcal Q}^{\rm a},A}}({\mathbb Z}^n)$, ${\mathcal K}_{ {\mathcal N}_{ {\mathcal Q}^{\rm p},A}}({\mathbb Z}^n)$, ${\mathcal K}_{ {\mathcal N}_{ {\mathcal Q}^{\rm p},B}}({\mathbb Z}^n)$, and ${\mathcal K}_{ {\mathcal N}_{ {\mathcal Q}^{\rm a},B}}({\mathbb Z}^n)$, are indistinguishable as topological abelian groups. We will show that what does distinguish ${\mathcal K}_{ {\mathcal N}_{ {\mathcal Q}^{\rm a},A}}({\mathbb Z}^n)$ and ${\mathcal K}_{ {\mathcal N}_{{\mathcal Q}^{\rm p},A}}({\mathbb Z}^n)$ from ${\mathcal K}_{ {\mathcal N}_{ {\mathcal Q}^{\rm p},B}}({\mathbb Z}^n)$ and ${\mathcal K}_{ {\mathcal N}_{ {\mathcal Q}^ {\rm a},B}}({\mathbb Z}^n)$ is an $R$-module structure on these pro-${\mathcal C}$ completions.

\subsection{Module Structure}\label{sec:Module}

For similar hyperbolic $A,B \in {\rm GL}_n({\mathbb Z})$, there is a commutative ring $R$ with identity to which both matrix rings ${\mathbb Z}[A]$ and ${\mathbb Z}[B]$ are isomorphic. When the common characteristic polynomial $f(t)$ of $A$ and $B$ is irreducible the ring $R$ is the integral domain given by quotient
\[ R = {\mathbb Z}[t] / \big(f(t)\big)\]
where $\big( f(t) \big) = f(t){\mathbb Z}[t]$ is the principal ideal generated by $f(t)$. The map $\vartheta_A: {\mathbb Z}[t]/\big( f(t)\big) \to {\mathbb Z}[A]$ given by $q(t) + \big(f(t)\big) \to q(A)$, where
\[ {\mathbb Z}[A] = \left\{ \sum_{i=0}^{n-1} c_i A^i : c_i\in {\mathbb Z}\right\},\]
is a ring isomorphism, and the similarly defined map $\vartheta_B:{\mathbb Z}[t]/\big(f(t)\big) \to {\mathbb Z}[B]$ is also a ring isomorphism. When $f(t)$ is irreducible, this gives ${\mathbb Z}[A] \cong R \cong {\mathbb Z}[B]$ as rings. When $f(t)$ is reducible, the map $\vartheta_A$ is a ring epimorphism, and so by the First Isomorphism Theorem for Rings it induces a ring isomorphism
\[ \bar{\vartheta}_A: \big({\mathbb Z}[t]/\big( f(t)\big)\big) / {\rm ker}(\vartheta_A) \to {\mathbb Z}[A].\]
Similarly $\vartheta_B$ is a ring epimorphism, and so it induces a ring isomorphism
\[ \bar{\vartheta}_B: \big({\mathbb Z}[t]/\big( f(t)\big)\big) / {\rm ker}\vartheta_B \to {\mathbb Z}[B].\]
Similarity of $A$ and $B$ implies that ${\rm ker}(\vartheta_A) = {\rm ker}(\vartheta_B)$. Thus it follows that
\[ {\mathbb Z}[A] \cong \big( {\mathbb Z}[t]/\big(f(t)\big)/{\rm ker}(\vartheta_A) = \big( {\mathbb Z}[t]/\big(f(t)\big)/{\rm ker}(\vartheta_B) \cong {\mathbb Z}[B].\]
When $f(t)$ is reducible, the ring $R$ to which ${\mathbb Z}[A]$ and ${\mathbb Z}[B]$ are both isomorphic is $\big( {\mathbb Z}[t]/\big(f(t)\big)/{\rm ker}(\vartheta_A)$, which is an integral domain when the minimal polynomial of the similar $A$ and $B$ is irreducible, and is not an integral domain when the minimal polynomial of the similar $A$ and $B$ is reducible. With ${\mathcal Q} = {\mathcal Q}^{\rm p}$ or ${\mathcal Q}^{\rm a}$, when we refer to the $R$-module structure it is understood that it is ${\mathbb Z}[A]$ on ${\mathcal N}_{ {\mathcal Q},A}$, ${\mathcal K}_{ {\mathcal N}_{{\mathcal Q}, A}}({\mathbb Z}^n)$, etc., and it is ${\mathbb Z}[B]$ on ${\mathcal N}_{ {\mathcal Q},B}$, ${\mathcal K}_{ {{\mathcal N}_{\mathcal Q},B}}({\mathbb Z}^n)$, etc.

For a hyperbolic $A\in {\rm GL}_n({\mathbb Z})$ and ${\mathcal Q}$ equal to either ${\mathcal Q}^{\rm p}$ or ${\mathcal Q}^{\rm a}$, we describe the $R$-module structure on the pro-${\mathcal C}$ topology ${\mathcal N}_{{\mathcal Q},A} = \{ N_{g,A} : g\in {\mathcal Q}\}$, the pro-${\mathcal C}$ completion ${\mathcal K}_{ {\mathcal N}_ {{\mathcal Q},A}}( {\mathbb Z}^n)$, and the several natural homomorphisms associated to the pro-${\mathcal C}$ completion. (The $R$-module structure for hyperbolic $B$ similar to $A$ is described in like manner.)  The ring $R\cong{\mathbb Z}[A]$ acts on ${\mathbb Z}^n$ via the automorphism $m\to mA$ for $m\in{\mathbb Z}^n$, making ${\mathbb Z}^n$ into a right ${\mathbb Z}[A]$-module. Since each $N_{g,A} \in {\mathcal N}_{ {\mathcal Q},A}$ is invariant under the right action of $A$, the subgroup $N_{g,A}$ is a right ${\mathbb Z}[A]$-module. Denoting the elements
\[ G_{g,A} = {\mathbb Z}^n/N_{g,A}\]
by $[m]_{g,A} = m + N_{g,A}$ for $m\in {\mathbb Z}^n$, the right ${\mathbb Z}[A]$-module structure on $G_{g,A}$ is induced by the topological automorphism $A_g$ of $G_{g,A}$ defined by
\[ A_g( [m]_{g,A}) = [mA]_{g,A}.\]
For $h\preceq g$ the transition maps are the continuous epimorphisms $\varphi_{g,h,A}:G_{g,A}\to G_{h,A}$ defined by
\[ \varphi_{g,h,A}([m]_{g,A}) = [m]_{h,A}\]
which are ${\mathbb Z}[A]$-module epimorphisms. The profinite completion of ${\mathbb Z}^n$ with respect to the pro-${\mathcal C}$ topology determined by ${\mathcal N}_{{\mathcal Q},A}$ we denote by
\[ G_{ {\mathcal Q},A} = \varprojlim_{g\in{\mathcal Q}} G_{g,A}\]
instead of ${\mathcal K}_{\mathcal N_{ {\mathcal Q},A}}({\mathbb Z}^n)$ for simplicity of notation.
The elements of $G_{ {\mathcal Q},A}$ consist of those coherent tuples $\{ [m_g]_{g,A}\}_{g\in{\mathcal Q}}$ lying in $\prod_{g\in{\mathcal Q}} G_{g,A}$ (where $m_g\in {\mathbb Z}^n$ for each $g\in {\mathcal Q}$), i.e., those tuples that satisfy
\[ [m_h]_{h,A} = \varphi_{g,h,A}( [m_g]_{g,A}) = [m_g]_{h,A}\]
whenever $h\preceq g$. The right ${\mathbb Z}[A]$-module structure on $G_{ {\mathcal Q},A}$ is induced by the topological automorphism $\Gamma_A$ of $G_{ {\mathcal Q},A}$ defined by
\[ \Gamma_A ( \{ [m_g]_{g,A}\}_{g\in{\mathcal Q}}) = \{ A_g ([m_g]_{g,A})\}_{g\in {\mathcal Q}}.\]
For each $g\in{\mathcal Q}$, the continuous epimorphism $\varphi_{g,A}:G_{ {\mathcal Q},A}\to G_{g,A}$ defined by
\[ \varphi_{g,A}( \{ [m_h]_{h,A}\}_{h\in{\mathcal Q}}) = [m_g]_{g,A}\]
is a ${\mathbb Z}[A]$-module epimorphism. By $R$-module isomorphism theory, each epimorphism $\varphi_{g,A}$ induces the canonical $R$-module isomorphism
\[ G_{{\mathcal Q},A}/{\rm ker} \varphi_{g,A} \cong_R G_{g,A}.\]
The continuous homomorphism $\iota_{ {\mathcal Q},A}:{\mathbb Z}^n \to G_{ {\mathcal Q},A}$ (we use $\iota_{{\mathcal Q},A}$ instead of $\iota_{ {\mathcal N}_{ {\mathcal Q},A}}$ for simplicity of notation) defined by
\[ \iota_{ {\mathcal Q},A}(m) = \{ [m]_{g,A}\}_{g\in{\mathcal Q}}\]
is also a ${\mathbb Z}[A]$-module homomorphism.  The map $\iota_{ {\mathcal Q},A}$ is a monomorphism for both ${\mathcal Q} = {\mathcal Q}^{\rm p}$ and ${\mathcal Q} = {\mathcal Q}^{\rm a}$ by Corollary \ref{iotainjective} because ${\mathcal N}_{ {\mathcal Q}^{\rm p},A}$ is cofinal in ${\mathcal F}$ and ${\mathcal N}_{ {\mathcal Q}^{\rm p},A}$ is cofinal in ${\mathcal N}_{ {\mathcal Q}^{\rm a},A}$ by Proposition \ref{cofinal}, and $\iota_{\mathcal F}$ is a monomorphism by Proposition \ref{residual}. We proved directly that hyperbolicity of $A$ implies the intersection of $N_{k,A}$, $k\in {\mathbb N}$, is $\{0\}$ (see Lemma 2.2 in \cite{BM12}), hence, since ${\mathcal N}_{ {\mathcal Q}^{\rm p},A} \subset {\mathcal N}_{ {\mathcal Q}^{\rm a},A}$ by Proposition \ref{cofinal}, that
\[ \bigcap_{k\in{\mathbb N} } N_{k,A} = \{0\} \ \Rightarrow\ \bigcap_{ g\in {\mathcal Q}^{\rm a}} N_{g,A} = \{0\},\]
also showing that $\iota_{ {\mathcal Q},A}$ is a monomorphism for both ${\mathcal Q} = {\mathcal Q}^{\rm p}$ and ${\mathcal Q} = {\mathcal Q}^{\rm a}$.

\subsection{Profinite Conjugacy}

\begin{definition} For ${\mathcal Q}$ being either ${\mathcal Q}^{\rm p}$ or ${\mathcal Q}^{\rm a}$, two similar hyperbolic $A,B\in {\rm GL}_n({\mathbb Z})$ are profinitely conjugate over ${\mathcal Q}$ if there exists a topological $R$-module isomorphism $\Psi_{\mathcal Q}:G_{ {\mathcal Q},A}\to G_{ {\mathcal Q},B}$.
\end{definition}

Profinite conjugacy over ${\mathcal Q}$ for similar hyperbolic toral automorphisms is an equivalence relation, as is readily shown. 

\begin{remark}\label{sufficient} We showed that strong {\rm BF}-equivalence over ${\mathcal Q}^{\rm p}$ implies that $G_{ {\mathcal Q}^{\rm p},A}$ and $G_{ {\mathcal Q}^{\rm p},B}$ are topologically $R$-module isomorphic (see Theorem 4.1 in \cite{BM12}) where we used the $R$-module version of Theorem 3.27 in \cite{RZ10} (up to $R$-module isomorphism, $G_{ {\mathcal Q}^{\rm p},A}$ and $G_{ {\mathcal Q}^{\rm p},B}$ have the same finite $R$-module quotients, namely ${\rm BF}_g(A) \cong_R {\rm BF}_g(B)$ for all $g\in {\mathcal Q}^{\rm p}$). The same argument shows that strong {\rm BF}-equivalence over ${\mathcal Q}^{\rm a}$ implies that $G_{ {\mathcal Q}^{\rm a},A}$ and $G_{ {\mathcal Q}^{\rm a},B}$ are topologically $R$-module isomorphic. Thus, for ${\mathcal Q} = {\mathcal Q}^{\rm p}$ or ${\mathcal Q}^{\rm a}$, a sufficient condition for when two similar hyperbolic $A,B\in {\rm GL}_n({\mathbb Z})$ are profinitely conjugate over ${\mathcal Q}$, and hence in the same equivalence class, is strong {\rm BF}-equivalence of $A$ and $B$ over ${\mathcal Q}$, i.e., ${\rm BF}_g(A) \cong_R {\rm BF}_g(B)$ for all $g\in {\mathcal Q}$. 
\end{remark}

\begin{lemma}\label{QminQ} For a hyperbolic $A\in {\rm GL}_n({\mathbb Z})$ the profinite $R$-modules $G_{ {\mathcal Q}^{\rm p},A}$ and $G_{{\mathcal Q}^{\rm a},A}$ are topologically $R$-module isomorphic.
\end{lemma}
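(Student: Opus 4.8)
The plan is to produce an explicit topological $R$-module isomorphism $G_{{\mathcal Q}^{\rm p},A} \to G_{{\mathcal Q}^{\rm a},A}$ by composing the projection maps coming from cofinality with the observation that all the transition maps are $R$-module maps (indeed they are ${\mathbb Z}[A]$-module maps, as recorded in Subsection \ref{sec:Module}). First I would invoke Proposition \ref{cofinal}, which tells us that ${\mathcal N}_{{\mathcal Q}^{\rm p},A}$ is cofinal in ${\mathcal N}_{{\mathcal Q}^{\rm a},A}$; this hands us (via Proposition \ref{topisoiota}, or equivalently Corollary \ref{GAisoZnbar}) a topological isomorphism of abelian groups $\Theta: G_{{\mathcal Q}^{\rm a},A} \to G_{{\mathcal Q}^{\rm p},A}$ given by forgetting those coordinates $[m_g]_{g,A}$ indexed by $g\in {\mathcal Q}^{\rm a}\setminus {\mathcal Q}^{\rm p}$. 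The content of the lemma beyond what is already in Corollary \ref{GAisoZnbar} is that this map respects the $R$-module structure, i.e., commutes with the automorphisms $\Gamma_A$ on each side.

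The key steps, in order: (1) Recall from Corollary \ref{GAisoZnbar} that $\Theta$ is a homeomorphism and group isomorphism. (2) Verify $\Theta \circ \Gamma_A^{{\mathcal Q}^{\rm a}} = \Gamma_A^{{\mathcal Q}^{\rm p}} \circ \Theta$: writing a coherent tuple $\{[m_g]_{g,A}\}_{g\in {\mathcal Q}^{\rm a}}$, both sides send it to $\{[m_g A]_{g,A}\}_{g\in {\mathcal Q}^{\rm p}}$, since $\Gamma_A$ acts coordinatewise by $A_g$ and $\Theta$ merely restricts the index set — forgetting coordinates and applying a coordinatewise automorphism visibly commute. (3) Conclude that $\Theta$ is ${\mathbb Z}[A]$-linear, hence $R$-linear once we transport along the isomorphism $\vartheta_A : R \to {\mathbb Z}[A]$, so $\Theta^{-1}$ is the desired topological $R$-module isomorphism $G_{{\mathcal Q}^{\rm p},A} \to G_{{\mathcal Q}^{\rm a},A}$. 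One should also note, for cleanliness, that $\Theta$ intertwines the embeddings: $\Theta \iota_{{\mathcal Q}^{\rm a},A} = \iota_{{\mathcal Q}^{\rm p},A}$, which follows from the last clause of Proposition \ref{topisoiota} applied with ${\mathcal N} = {\mathcal N}_{{\mathcal Q}^{\rm a},A}$ and ${\mathcal M} = {\mathcal N}_{{\mathcal Q}^{\rm p},A}$.

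There is essentially no obstacle here: the only thing to be careful about is that $R$-linearity is defined via the action of ${\mathbb Z}[A]$ (the generator $A$ acting as $\Gamma_A$), so it suffices to check commutation with the single automorphism $\Gamma_A$ rather than with a general ring element, and this is the trivial coordinatewise computation in step (2). The proof is therefore short: apply Proposition \ref{cofinal} and Proposition \ref{topisoiota} to get the topological group isomorphism, then observe that forgetting coordinates commutes with the coordinatewise action of $\Gamma_A$, so the isomorphism is automatically $R$-linear.

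\begin{proof}
By Proposition \ref{cofinal}, ${\mathcal N}_{ {\mathcal Q}^{\rm p},A}$ is cofinal in ${\mathcal N}_{ {\mathcal Q}^{\rm a},A}$. Hence by Proposition \ref{topisoiota} there is a topological isomorphism of abelian groups
\[ \Theta : G_{ {\mathcal Q}^{\rm a},A} = {\mathcal K}_{ {\mathcal N}_{ {\mathcal Q}^{\rm a},A}}({\mathbb Z}^n) \longrightarrow {\mathcal K}_{ {\mathcal N}_{ {\mathcal Q}^{\rm p},A}}({\mathbb Z}^n) = G_{ {\mathcal Q}^{\rm p},A} \]
given explicitly by
\[ \Theta\big( \{ [m_g]_{g,A}\}_{g\in {\mathcal Q}^{\rm a}} \big) = \{ [m_g]_{g,A}\}_{g\in {\mathcal Q}^{\rm p}}, \]
and $\Theta$ satisfies $\Theta\, \iota_{ {\mathcal Q}^{\rm a},A} = \iota_{ {\mathcal Q}^{\rm p},A}$.

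It remains to check that $\Theta$ is ${\mathbb Z}[A]$-linear, equivalently that $\Theta\, \Gamma_A = \Gamma_A\, \Theta$, where $\Gamma_A$ denotes the defining automorphism on each of $G_{ {\mathcal Q}^{\rm a},A}$ and $G_{ {\mathcal Q}^{\rm p},A}$. Let $\{ [m_g]_{g,A}\}_{g\in {\mathcal Q}^{\rm a}} \in G_{ {\mathcal Q}^{\rm a},A}$. Since $\Gamma_A$ acts coordinatewise by $[m_g]_{g,A} \mapsto A_g([m_g]_{g,A}) = [m_g A]_{g,A}$, we have
\[ \Theta\big( \Gamma_A( \{ [m_g]_{g,A}\}_{g\in {\mathcal Q}^{\rm a}}) \big) = \Theta\big( \{ [m_g A]_{g,A}\}_{g\in {\mathcal Q}^{\rm a}} \big) = \{ [m_g A]_{g,A}\}_{g\in {\mathcal Q}^{\rm p}}, \]
while
\[ \Gamma_A\big( \Theta( \{ [m_g]_{g,A}\}_{g\in {\mathcal Q}^{\rm a}}) \big) = \Gamma_A\big( \{ [m_g]_{g,A}\}_{g\in {\mathcal Q}^{\rm p}} \big) = \{ [m_g A]_{g,A}\}_{g\in {\mathcal Q}^{\rm p}}. \]
Thus $\Theta\, \Gamma_A = \Gamma_A\, \Theta$, so $\Theta$ is an isomorphism of right ${\mathbb Z}[A]$-modules, hence of right $R$-modules via the ring isomorphism $\vartheta_A : R\to {\mathbb Z}[A]$. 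Therefore $\Theta^{-1} : G_{ {\mathcal Q}^{\rm p},A}\to G_{ {\mathcal Q}^{\rm a},A}$ is a topological $R$-module isomorphism.
\end{proof}
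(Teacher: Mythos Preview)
Your proof is correct and follows exactly the approach of the paper: invoke the topological group isomorphism from Proposition~\ref{topisoiota} (via Proposition~\ref{cofinal} / Corollary~\ref{GAisoZnbar}) and then check that this explicit ``forgetting coordinates'' map commutes with $\Gamma_A$, hence is an $R$-module map. The paper simply asserts this last verification is ``straightforward,'' whereas you have spelled it out.
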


\begin{proof} By Corollary \ref{GAisoZnbar} the profinite $R$-modules $G_{ {\mathcal Q}^{\rm p},A}$ and $G_{ {\mathcal Q}^{\rm a},A}$ are topologically isomorphic as topological abelian groups. The proof of Corollary \ref{GAisoZnbar} uses Proposition \ref{topisoiota} where an explicit topological isomorphism is given. It is straightforward to verify that this explicitly given topological isomorphism is an $R$-module map, making $G_{ {\mathcal Q}^{\rm p},A}$ and $G_{ {\mathcal Q}^{\rm a},A}$ topologically $R$-module isomorphic.
\end{proof}

For ${\mathcal Q}$ being either ${\mathcal Q}^{\rm p}$ or ${\mathcal Q}^{\rm a}$, the pro-${\mathcal C}$ completions $G_{{\mathcal Q},A}$ and $G_{ {\mathcal Q},B}$ have, of course, the closest connection with the generalized Bowen-Franks $R$-modules ${\rm BF}_g(A)$ and ${\rm BF}_g(B)$ for $g\in {\mathcal Q}$. This is because
\[ {\rm BF}_g(A) = {\mathbb Z}^n/{\mathbb Z}^n g(A) = G_{g,A} \cong_R G_A/{\rm ker}\varphi_{g,A},\ g\in {\mathcal Q},\]
with a similar statement holding with $A$ replaced by $B$. The pro-${\mathcal C}$ completions $G_{{\mathcal Q},A}$ and $G_{ {\mathcal Q},B}$ are both topologically isomorphic to the full pro-${\mathcal C}$ completion ${\mathcal K}_{\mathcal F}({\mathbb Z}^n)$ by Corollary \ref{GAisoZnbar}, but not every finite index subgroup of ${\mathbb Z}^n$ that is used in the construction of ${\mathcal K}_{\mathcal F}({\mathbb Z}^n)$ is invariant by the corresponding right action of $A$ or $B$. Hence we use the dynamically defined $R$-modules $G_{{\mathcal Q},A}$ and $G_{ {\mathcal Q},B}$ to prove that profinite conjugacy over ${\mathcal Q}$ of similar hyperbolic $A$ and $B$ implies strong {\rm BF}-equivalence of $A$ and $B$ over ${\mathcal Q}$. We make use of the one-to-one correspondence in Theorem \ref{fullproCtopology}, via Remark \ref{applyfullproC}, from $\{ U: U\leq_o {\mathbb Z}^n; {\mathcal N}_{ {\mathcal Q},A}\}$ to $\{ V: V\leq_o G_{ {\mathcal Q},A}\}$ and its properties without explicit reference in what follows.

\subsection{The Converse} The main result is proven in this subsection. The idea of the proof of the main result is to mimic the quotient of the generalized Bowen-Franks $R$-modules in the profinite $R$-module setting and connect it with generalized Bowen-Franks $R$-modules through the isomorphism theory for $R$-modules. The first step in this direction is to show through a series of four lemmas that, for a pro-${\mathcal C}$ topology ${\mathcal N}_{ {\mathcal Q},A}$ on ${\mathbb Z}^n$ with ${\mathcal Q}$ being either ${\mathcal Q}^{\rm p}$ or ${\mathcal Q}^{\rm a}$, the closure of the image $g(\Gamma_A)(G_{ {\mathcal Q},A})$ is equal to ${\rm ker}\varphi_{g,A}$ for each $g\in {\mathcal Q}$.

\begin{lemma}\label{intersection} For a hyperbolic $A\in {\rm GL}_n({\mathbb Z})$ and $g\in{\mathcal Q}$, there holds
\[\iota_{ {\mathcal Q},A}({\mathbb Z}^n g(A))\subset g(\Gamma_A) (G_{ {\mathcal Q},A})\cap \iota_{ {\mathcal Q},A}({\mathbb Z}^n).\]
\end{lemma}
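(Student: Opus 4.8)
The plan is to take an arbitrary element of $\iota_{{\mathcal Q},A}({\mathbb Z}^n g(A))$, exhibit it as both the image under $\iota_{{\mathcal Q},A}$ of an element of ${\mathbb Z}^n$ (so that it lies in $\iota_{{\mathcal Q},A}({\mathbb Z}^n)$) and as the image under the continuous endomorphism $g(\Gamma_A)$ of some element of $G_{{\mathcal Q},A}$. The first membership is immediate: if $w = z\,g(A)$ for some $z\in {\mathbb Z}^n$, then $\iota_{{\mathcal Q},A}(w) = \iota_{{\mathcal Q},A}(z\,g(A))$ is visibly in $\iota_{{\mathcal Q},A}({\mathbb Z}^n)$. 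For the second, I would use that $\iota_{{\mathcal Q},A}$ is an $R$-module homomorphism, where the right action of $A$ on ${\mathbb Z}^n$ corresponds to $\Gamma_A$ on $G_{{\mathcal Q},A}$; concretely, for any polynomial $g\in {\mathbb Z}[t]$ and any $z\in {\mathbb Z}^n$ one has $\iota_{{\mathcal Q},A}(z\,g(A)) = g(\Gamma_A)\bigl(\iota_{{\mathcal Q},A}(z)\bigr)$, since $\iota_{{\mathcal Q},A}$ intertwines $m\mapsto mA$ with $\Gamma_A$ (this is exactly the statement that $\iota_{{\mathcal Q},A}\circ A = \Gamma_A\circ \iota_{{\mathcal Q},A}$, iterated and combined ${\mathbb Z}$-linearly over the coefficients of $g$).

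Thus, given $w\in {\mathbb Z}^n g(A)$, write $w = z\,g(A)$ with $z\in {\mathbb Z}^n$ and set $x = \iota_{{\mathcal Q},A}(z)\in G_{{\mathcal Q},A}$. Then $\iota_{{\mathcal Q},A}(w) = g(\Gamma_A)(x) \in g(\Gamma_A)(G_{{\mathcal Q},A})$, while also $\iota_{{\mathcal Q},A}(w)\in \iota_{{\mathcal Q},A}({\mathbb Z}^n)$ trivially. Hence $\iota_{{\mathcal Q},A}(w)$ lies in the intersection $g(\Gamma_A)(G_{{\mathcal Q},A})\cap \iota_{{\mathcal Q},A}({\mathbb Z}^n)$, which gives the claimed containment once we note that $w$ was an arbitrary element of ${\mathbb Z}^n g(A)$.

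The only point that requires a sentence of care is the intertwining identity $\iota_{{\mathcal Q},A}(mA) = \Gamma_A(\iota_{{\mathcal Q},A}(m))$ and its polynomial consequence. This follows by unwinding the definitions: $\iota_{{\mathcal Q},A}(mA) = \{[mA]_{g,A}\}_{g\in{\mathcal Q}}$, and $\Gamma_A(\iota_{{\mathcal Q},A}(m)) = \{A_g([m]_{g,A})\}_{g\in{\mathcal Q}} = \{[mA]_{g,A}\}_{g\in{\mathcal Q}}$, which agree componentwise. Extending from $A$ to $g(A)$ is just ${\mathbb Z}$-linearity of $\iota_{{\mathcal Q},A}$ together with compatibility with powers of $A$. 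I do not anticipate any genuine obstacle here; the lemma is the easy "one inclusion" half of the identification of $\overline{g(\Gamma_A)(G_{{\mathcal Q},A})}$ with ${\rm ker}\,\varphi_{g,A}$, and the work of taking closures and proving the reverse containment is deferred to the subsequent lemmas.
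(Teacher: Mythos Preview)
Your proof is correct and follows essentially the same approach as the paper: pick $w = z\,g(A)$, use the intertwining $\iota_{{\mathcal Q},A}(z\,g(A)) = g(\Gamma_A)(\iota_{{\mathcal Q},A}(z))$ to land in $g(\Gamma_A)(G_{{\mathcal Q},A})$, and observe the trivial membership in $\iota_{{\mathcal Q},A}({\mathbb Z}^n)$. The only difference is that you spell out the verification of the intertwining identity componentwise, which the paper takes for granted.
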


\begin{proof} Suppose $x \in \iota_{ {\mathcal Q},A}( {\mathbb Z}^n g(A))$. Then $x=\iota_{ {\mathcal Q},A}(m g(A))$ for some $m\in{\mathbb Z}^n$. Hence $x = g(\Gamma_A)(\iota_{ {\mathcal Q},A}(m)) \in g(\Gamma_A)(G_{ {\mathcal Q},A})$. Since $mg(A)\in{\mathbb Z}^n$, we have $x \in \iota_{ {\mathcal Q},A}({\mathbb Z}^n)$, and so
\[ x \in g(\Gamma_A)(G_{ {\mathcal Q},A}) \cap \iota_{ {\mathcal Q},A}({\mathbb Z}^n).\]
This implies that $\iota_{ {\mathcal Q},A}({\mathbb Z}^n g(A) )\subset g(\Gamma_A)(G_{ {\mathcal Q},A}) \cap \iota_{ {\mathcal Q},A}({\mathbb Z}^n)$.
\end{proof}

\begin{lemma}\label{contains} For a hyperbolic $A\in {\rm GL}_n({\mathbb Z})$ and $g\in{\mathcal Q}$, there holds
\[ g(\Gamma_A)(G_{ {\mathcal Q},A}) \subset  {\rm ker}\varphi_{g,A}.\] 
\end{lemma}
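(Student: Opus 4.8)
The plan is a direct computation using the coordinatewise definition of $\Gamma_A$. First I would record how $g(\Gamma_A)$ acts on a coherent tuple. Since
\[ \Gamma_A\bigl(\{[m_h]_{h,A}\}_{h\in{\mathcal Q}}\bigr) = \{A_h([m_h]_{h,A})\}_{h\in{\mathcal Q}} = \{[m_hA]_{h,A}\}_{h\in{\mathcal Q}} \]
acts on each coordinate through $A_h$, every power $\Gamma_A^i$ acts on each coordinate through $A_h^i$, and hence for $g(t)=\sum_i c_it^i\in{\mathcal Q}$ the endomorphism $g(\Gamma_A)=\sum_i c_i\Gamma_A^i$ acts on each coordinate through $g(A_h)$; from $A_h([m]_{h,A})=[mA]_{h,A}$ one gets $g(A_h)([m]_{h,A})=[mg(A)]_{h,A}$. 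Thus $g(\Gamma_A)(\{[m_h]_{h,A}\}_{h\in{\mathcal Q}})=\{[m_hg(A)]_{h,A}\}_{h\in{\mathcal Q}}$. The only thing requiring justification here is that polynomial evaluation of $\Gamma_A$ is computed coordinatewise, and this is immediate because the inclusion $G_{{\mathcal Q},A}\hookrightarrow\prod_{h\in{\mathcal Q}}G_{h,A}$ and each projection $\varphi_{h,A}$ are additive and intertwine $\Gamma_A$ with $A_h$.

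With this formula in hand, I would take an arbitrary $\{[m_h]_{h,A}\}_{h\in{\mathcal Q}}\in G_{{\mathcal Q},A}$ and project to the $g$-th coordinate:
\[ \varphi_{g,A}\bigl(g(\Gamma_A)(\{[m_h]_{h,A}\}_{h\in{\mathcal Q}})\bigr) = [m_gg(A)]_{g,A}. \]
Since $m_gg(A)\in{\mathbb Z}^ng(A)=N_{g,A}$, this coset is the zero element of $G_{g,A}={\mathbb Z}^n/N_{g,A}$, so $g(\Gamma_A)(\{[m_h]_{h,A}\}_{h\in{\mathcal Q}})\in{\rm ker}\varphi_{g,A}$; as the element was arbitrary, this gives $g(\Gamma_A)(G_{{\mathcal Q},A})\subset{\rm ker}\varphi_{g,A}$.

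Alternatively, one can bypass the coordinatewise description: by the computation in the proof of Lemma \ref{intersection}, $g(\Gamma_A)(\iota_{{\mathcal Q},A}(m))=\iota_{{\mathcal Q},A}(mg(A))$ for every $m\in{\mathbb Z}^n$, so the continuous homomorphism $\varphi_{g,A}\circ g(\Gamma_A)$ sends each $\iota_{{\mathcal Q},A}(m)$ to $[mg(A)]_{g,A}=0$; hence it vanishes on the dense subgroup $\iota_{{\mathcal Q},A}({\mathbb Z}^n)$, and therefore vanishes identically on $G_{{\mathcal Q},A}$, which yields the inclusion. I do not expect any genuine obstacle here: the lemma is routine once the action of $g(\Gamma_A)$ on coherent tuples is made explicit, and it is precisely the profinite reflection of the elementary fact that $mg(A)\in{\mathbb Z}^ng(A)$ in each finite quotient.
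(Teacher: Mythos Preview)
Your proposal is correct and takes essentially the same approach as the paper: compute $g(\Gamma_A)$ coordinatewise on a coherent tuple to obtain $\{[m_h g(A)]_{h,A}\}_{h\in{\mathcal Q}}$, then project to the $g$-th coordinate and observe that $[m_g g(A)]_{g,A}=0$ since $m_g g(A)\in{\mathbb Z}^n g(A)$. Your additional justification of the coordinatewise action and the density-based alternative are both sound but not needed for the paper's shorter version.
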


\begin{proof} For $\{ [m_h]_{h,A}\}_{h\in{\mathcal Q}}\in G_{ {\mathcal Q},A}$, we have
\[ g(\Gamma_A)(\{ [m_h]_{h,A}\}_{h\in{\mathcal Q}}) = \{ [m_h g(A)]_{h,A}\}_{h\in{\mathcal Q}}.\]
Then
\[ \varphi_{g,A}( g(\Gamma_A) (\{ [m_h]_{h,A}\}_{h\in{\mathcal Q}})) = [m_g g(A)]_{g,A} = 0\]
because $m_g g(A)\in {\mathbb Z}^n g(A)$. Thus $g(\Gamma_A)(G_{ {\mathcal Q},A})  \subset  {\rm ker}\varphi_{g,A}$.
\end{proof}

\begin{lemma}\label{kernel} For a hyperbolic $A\in {\rm GL}_n({\mathbb Z})$ and $g\in{\mathcal Q}$, there holds
\[ {\rm ker}\varphi_{g,A} \cap \iota_{ {\mathcal Q},A}({\mathbb Z}^n) = \iota_{ {\mathcal Q},A}( {\mathbb Z}^n g(A)).\]
\end{lemma}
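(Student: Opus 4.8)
The plan is to establish the two inclusions separately; the harder direction is already essentially packaged in the two preceding lemmas, so the proof is short.

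For the inclusion $\iota_{{\mathcal Q},A}({\mathbb Z}^n g(A)) \subseteq {\rm ker}\,\varphi_{g,A} \cap \iota_{{\mathcal Q},A}({\mathbb Z}^n)$, I would simply chain Lemmas \ref{intersection} and \ref{contains}: the first gives $\iota_{{\mathcal Q},A}({\mathbb Z}^n g(A)) \subseteq g(\Gamma_A)(G_{{\mathcal Q},A}) \cap \iota_{{\mathcal Q},A}({\mathbb Z}^n)$, and the second gives $g(\Gamma_A)(G_{{\mathcal Q},A}) \subseteq {\rm ker}\,\varphi_{g,A}$, so intersecting with $\iota_{{\mathcal Q},A}({\mathbb Z}^n)$ yields the claim. (One may also check this directly: for $m\in{\mathbb Z}^n$ one has $\varphi_{g,A}(\iota_{{\mathcal Q},A}(mg(A))) = [mg(A)]_{g,A} = 0$ since $mg(A)\in{\mathbb Z}^n g(A) = N_{g,A}$, and $\iota_{{\mathcal Q},A}(mg(A))\in\iota_{{\mathcal Q},A}({\mathbb Z}^n)$ trivially.)

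For the reverse inclusion, I would take $x \in {\rm ker}\,\varphi_{g,A} \cap \iota_{{\mathcal Q},A}({\mathbb Z}^n)$. Since $x \in \iota_{{\mathcal Q},A}({\mathbb Z}^n)$, write $x = \iota_{{\mathcal Q},A}(m) = \{[m]_{h,A}\}_{h\in{\mathcal Q}}$ for some $m \in {\mathbb Z}^n$. Applying $\varphi_{g,A}$ and using $x \in {\rm ker}\,\varphi_{g,A}$ gives $[m]_{g,A} = \varphi_{g,A}(x) = 0$ in $G_{g,A} = {\mathbb Z}^n/{\mathbb Z}^n g(A)$, hence $m \in {\mathbb Z}^n g(A)$, and therefore $x = \iota_{{\mathcal Q},A}(m) \in \iota_{{\mathcal Q},A}({\mathbb Z}^n g(A))$. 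Combining the two inclusions gives the asserted equality.

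The only point requiring any care — and it is barely an obstacle — is the compatibility identity $\varphi_{g,A} \circ \iota_{{\mathcal Q},A} = q_{g,A}$, where $q_{g,A}\colon {\mathbb Z}^n \to G_{g,A}$ is the canonical quotient map $m \mapsto [m]_{g,A}$; this is immediate from the definitions of $\varphi_{g,A}$ and $\iota_{{\mathcal Q},A}$. Together with the elementary fact that $\ker q_{g,A} = {\mathbb Z}^n g(A) = N_{g,A}$, this is all that is needed. No topology, no hyperbolicity beyond what makes $G_{g,A}$ finite, and no limiting argument enters here; the content is purely first-isomorphism-theorem bookkeeping for the inverse system, so I do not expect any genuine difficulty.
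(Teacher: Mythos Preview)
Your proof is correct and matches the paper's argument essentially verbatim: the paper proves the first inclusion by the direct verification you give in parentheses (rather than by chaining Lemmas~\ref{intersection} and~\ref{contains}, though that route is equally valid), and the reverse inclusion is identical to yours. Your closing remarks about the compatibility $\varphi_{g,A}\circ\iota_{{\mathcal Q},A} = q_{g,A}$ and the absence of any real obstacle are accurate.
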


\begin{proof} Suppose $x\in \iota_{ {\mathcal Q},A}({\mathbb Z}^n g(A))$. Then $x=\iota_{ {\mathcal Q},A}(m g(A))$ for some $m\in{\mathbb Z}^n$. Hence $\iota_{ {\mathcal Q},A}(m g(A))\in \iota_{ {\mathcal Q},A}({\mathbb Z}^n)$, and 
\[ \varphi_{g,A} (x) = [mg(A)]_{g,A} = 0\]
because $m g(A)\in {\mathbb Z}^n g(A)$. Thus $\iota_{ {\mathcal Q},A}({\mathbb Z}^n g(A))\subset {\rm ker}\varphi_{g,A}\cap \iota_{ {\mathcal Q},A}({\mathbb Z}^n)$.

Now suppose that $x \in {\rm ker}\varphi_{g,A} \cap \iota_{ {\mathcal Q},A}({\mathbb Z}^n)$. Then $x=\iota_{ {\mathcal Q},A}(m)$ for some $m\in{\mathbb Z}^n$ and
\[ 0 = \varphi_{g,A} (\iota_{ {\mathcal Q},A}(m)) =  [m]_{g,A}.\]
This says that $m\in {\mathbb Z}^n g(A)$, so that $x = \iota_{ {\mathcal Q},A}(m) \in \iota_{ {\mathcal Q},A}({\mathbb Z}^n g(A))$. This implies that ${\rm ker}\varphi_{g,A}\cap \iota_{ {\mathcal Q},A}({\mathbb Z}^n) \subset \iota_{ {\mathcal Q},A}({\mathbb Z}^n g(A))$.

Therefore ${\rm ker}\varphi_{g,A} \cap \iota_{ {\mathcal Q},A}({\mathbb Z}^n) = \iota_{ {\mathcal Q},A}( {\mathbb Z}^n g(A))$.
\end{proof}

\begin{lemma}\label{open} For a hyperbolic $A\in {\rm GL}_n({\mathbb Z})$ and $g\in{\mathcal Q}$, there holds
\[ \overline{ g(\Gamma_A)(G_{ {\mathcal Q},A}) } = {\rm ker}\varphi_{g,A}.\]
\end{lemma}

\begin{proof} By Lemma \ref{contains}, we have $\overline{ g(\Gamma_A)(G_{ {\mathcal Q},A}) } \subset {\rm ker}\varphi_{g,A}$ because ${\rm ker}\varphi_{g,A}$ is closed.

On the other hand, by Lemma \ref{kernel}, we have ${\rm ker}\varphi_{g,A} \cap \iota_{ {\mathcal Q},A}({\mathbb Z}^n) = \iota_{ {\mathcal Q},A}( {\mathbb Z}^n g(A))$. Since ${\rm ker}\varphi_{g,A} \leq_o G_{ {\mathcal Q},A}$, we have that
\[ \overline{\iota_{ {\mathcal Q},A}({\mathbb Z}^n g(A))} = \overline{ {\rm ker}\varphi_{g,A}\cap \iota_{ {\mathcal Q},A}({\mathbb Z}^n)}= {\rm ker}\varphi_{g,A}.\]
By Lemma \ref{intersection}, we have $\iota_{ {\mathcal Q},A}({\mathbb Z}^n g(A)) \subset g(\Gamma_A)(G_{ {\mathcal Q},A})\cap \iota_{ {\mathcal Q},A}({\mathbb Z}^n)$, so that
\begin{align*} \overline{\iota_{ {\mathcal Q},A}({\mathbb Z}^n g(A) )} 
& \subset \overline{ g(\Gamma_A)(G_{ {\mathcal Q},A}) \cap \iota_{ {\mathcal Q},A}({\mathbb Z}^n) } \\
& \subset \overline{ g(\Gamma_A)(G_{ {\mathcal Q},A}) } \cap \overline{\iota_{ {\mathcal Q},A}({\mathbb Z}^n)} \\
& = \overline{ g(\Gamma_A)(G_{ {\mathcal Q},A}) } \cap G_{ {\mathcal Q},A} \\
& = \overline{ g(\Gamma_A)(G_{ {\mathcal Q},A})}.
\end{align*}
This gives ${\rm ker}\varphi_{g,A} \subset \overline{ g(\Gamma_A)(G_{ {\mathcal Q},A})}$.

Therefore $\overline{ g(\Gamma_A)(G_{ {\mathcal Q},A})} = {\rm ker}\varphi_{g,A}$.
\end{proof}

For any profinite conjugacy $\Psi_{\mathcal Q}:G_{ {\mathcal Q},A}\to G_{ {\mathcal Q},B}$ over ${\mathcal Q}$ there holds 
\[  g(\Gamma_B)\Psi_{\mathcal Q} =   \Psi_{\mathcal Q}g(\Gamma_A) {\rm\ for\ all\ }g\in{\mathcal Q},\]
because $\Psi_{\mathcal Q}$ is an $R$-module isomorphism, i.e., $ \Gamma_B\Psi_{\mathcal Q} =  \Psi_{\mathcal Q}\Gamma_A $.

\begin{lemma}\label{congkpower} For similar hyperbolic $A,B\in {\rm GL}_n({\mathbb Z})$, if $\Psi_{\mathcal Q}:G_{{\mathcal Q},A}\to G_{ {\mathcal Q},B}$ is a profinite conjugacy over ${\mathcal Q}$, then for all $g\in{\mathcal Q}$ there holds
\[  \Psi_{\mathcal Q}\big( g(\Gamma_A)(G_{ {\mathcal Q},A})  \big) = g(\Gamma_B)(G_{ {\mathcal Q},B}) .\]
\end{lemma}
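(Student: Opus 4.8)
The plan is to exploit the intertwining relation $g(\Gamma_B)\Psi_{\mathcal Q} = \Psi_{\mathcal Q}\, g(\Gamma_A)$ together with the fact that $\Psi_{\mathcal Q}$ is a bijection. Since $\Psi_{\mathcal Q}$ is an $R$-module isomorphism it commutes with $\Gamma_A$ and $\Gamma_B$ in the sense that $\Gamma_B\Psi_{\mathcal Q} = \Psi_{\mathcal Q}\Gamma_A$, and hence $g(\Gamma_B)\Psi_{\mathcal Q} = \Psi_{\mathcal Q}\, g(\Gamma_A)$ for every $g\in {\mathbb Z}[t]$, in particular for $g\in{\mathcal Q}$; this is already recorded in the excerpt just before the statement.

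First I would establish one inclusion: for any $y\in g(\Gamma_A)(G_{{\mathcal Q},A})$, write $y = g(\Gamma_A)(x)$ with $x\in G_{{\mathcal Q},A}$, and then
\[ \Psi_{\mathcal Q}(y) = \Psi_{\mathcal Q}\big(g(\Gamma_A)(x)\big) = g(\Gamma_B)\big(\Psi_{\mathcal Q}(x)\big) \in g(\Gamma_B)(G_{{\mathcal Q},B}), \]
which gives $\Psi_{\mathcal Q}\big(g(\Gamma_A)(G_{{\mathcal Q},A})\big) \subset g(\Gamma_B)(G_{{\mathcal Q},B})$. For the reverse inclusion I would use surjectivity of $\Psi_{\mathcal Q}$: given $z\in g(\Gamma_B)(G_{{\mathcal Q},B})$, write $z = g(\Gamma_B)(w)$ with $w\in G_{{\mathcal Q},B}$, pick $x\in G_{{\mathcal Q},A}$ with $\Psi_{\mathcal Q}(x) = w$, and then $z = g(\Gamma_B)(\Psi_{\mathcal Q}(x)) = \Psi_{\mathcal Q}(g(\Gamma_A)(x)) \in \Psi_{\mathcal Q}\big(g(\Gamma_A)(G_{{\mathcal Q},A})\big)$. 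Combining the two inclusions yields the claimed equality. (Alternatively, one can phrase the reverse inclusion as: apply the first inclusion to $\Psi_{\mathcal Q}^{-1}$, which is also a profinite conjugacy, to get $\Psi_{\mathcal Q}^{-1}\big(g(\Gamma_B)(G_{{\mathcal Q},B})\big)\subset g(\Gamma_A)(G_{{\mathcal Q},A})$, then apply $\Psi_{\mathcal Q}$.)

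There is essentially no obstacle here: the statement is a direct formal consequence of equivariance plus bijectivity, and it does not even require continuity of $\Psi_{\mathcal Q}$ — only that it is an $R$-module isomorphism. The only small care needed is to note that $g(\Gamma_A)$ and $g(\Gamma_B)$ make sense as elements of the respective endomorphism rings acting on the profinite modules and that the intertwining relation extends from $\Gamma$ to arbitrary polynomials in $\Gamma$; this is immediate since $\Psi_{\mathcal Q}$ being an $R$-module map means it commutes with the action of $R\cong{\mathbb Z}[A]\cong{\mathbb Z}[B]$, under which $\Gamma_A$ and $\Gamma_B$ correspond to the same ring element (the image of $t$), so any polynomial identity in that element transports across $\Psi_{\mathcal Q}$.
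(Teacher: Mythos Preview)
Your proof is correct and follows essentially the same approach as the paper: both establish the two inclusions by direct element-chasing using the intertwining relation $g(\Gamma_B)\Psi_{\mathcal Q} = \Psi_{\mathcal Q}\, g(\Gamma_A)$ and the bijectivity of $\Psi_{\mathcal Q}$. The only differences are cosmetic (variable names), and your parenthetical remark about applying the first inclusion to $\Psi_{\mathcal Q}^{-1}$ is a nice alternative phrasing not in the paper.
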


\begin{proof} Let $\Psi_{\mathcal Q}:G_{ {\mathcal Q},A}\to G_{ {\mathcal Q},B}$ be a topological $R$-module isomorphism. 

Suppose $x\in g(\Gamma_A)(G_{ {\mathcal Q},A})$. Then there exists $y\in G_{ {\mathcal Q},A}$ such that $x = g(\Gamma_A)(y)$. Set $z= \Psi_{\mathcal Q}(y) \in G_{ {\mathcal Q},B}$. Then we have
\[ \Psi_{\mathcal Q}(x) = \Psi_{\mathcal Q} g(\Gamma_A) (y) =  g(\Gamma_B)\Psi_{\mathcal Q} (y) =  g(\Gamma_B)(z). \]
Thus $\Psi_{\mathcal Q}(x) \in g(\Gamma_B)(G_{ {\mathcal Q},B}) $, and so $\Psi_{\mathcal Q}( g(\Gamma_A)(G_{ {\mathcal Q},A}) ) \subset  g(\Gamma_B) (G_{ {\mathcal Q},B}) $.

Now let $x\in g(\Gamma_B)(G_{ {\mathcal Q},B})$. Then there is $y\in G_{ {\mathcal Q},B}$ such that $x =  g(\Gamma_B)(y)$. Because $\Psi_{\mathcal Q}$ is a bijection, there exists a unique  $z\in G_{ {\mathcal Q},A}$ such that $\Psi_{\mathcal Q}(z) = y$. Set $w = g(\Gamma_A)(z)$. Then we have
\[ \Psi_{\mathcal Q} (w)= \Psi_{\mathcal Q}\big(g(\Gamma_A)(z) \big) =  g(\Gamma_B)\Psi_{\mathcal Q} (z) =  g(\Gamma_B)(y) = x.\]
This gives $x\in \Psi_{\mathcal Q}( g(\Gamma_A)(G_{ {\mathcal Q},A})  )$, and so $g(\Gamma_B)( G_{ {\mathcal Q},B}) \subset \Psi_{\mathcal Q} ( g(\Gamma_A)(G_{ {\mathcal Q},A})  )$.

Therefore $\Psi_{\mathcal Q}( g(\Gamma_A)(G_{ {\mathcal Q},A})  ) = g(\Gamma_B)(G_{ {\mathcal Q},B}) $.
\end{proof}

\begin{theorem}\label{main} Suppose that $A,B\in {\rm GL}_n({\mathbb Z})$ are similar and hyperbolic. If $A$ and $B$ are profinitely conjugate over ${\mathcal Q}$, then $A$ and $B$ are strongly {\rm BF}-equivalent over ${\mathcal Q}$.
\end{theorem}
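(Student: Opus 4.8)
The plan is to transport the quotient description of the generalized Bowen-Franks $R$-modules across the profinite conjugacy $\Psi_{\mathcal Q}$, using that $\Psi_{\mathcal Q}$ is simultaneously an $R$-module isomorphism and a homeomorphism. Recall that for each $g\in{\mathcal Q}$ there are canonical $R$-module isomorphisms ${\rm BF}_g(A) = G_{g,A}\cong_R G_{{\mathcal Q},A}/{\rm ker}\varphi_{g,A}$ and ${\rm BF}_g(B) = G_{g,B}\cong_R G_{{\mathcal Q},B}/{\rm ker}\varphi_{g,B}$. So it suffices to produce, for each $g\in{\mathcal Q}$, an $R$-module isomorphism $G_{{\mathcal Q},A}/{\rm ker}\varphi_{g,A}\cong_R G_{{\mathcal Q},B}/{\rm ker}\varphi_{g,B}$, and the natural candidate is the map induced by $\Psi_{\mathcal Q}$.

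First I would fix $g\in{\mathcal Q}$ and show $\Psi_{\mathcal Q}({\rm ker}\varphi_{g,A}) = {\rm ker}\varphi_{g,B}$. By Lemma \ref{congkpower}, $\Psi_{\mathcal Q}\big(g(\Gamma_A)(G_{{\mathcal Q},A})\big) = g(\Gamma_B)(G_{{\mathcal Q},B})$. Since $\Psi_{\mathcal Q}$ is a homeomorphism it commutes with closure, so applying Lemma \ref{open} to both sides gives $\Psi_{\mathcal Q}\big(\overline{g(\Gamma_A)(G_{{\mathcal Q},A})}\big) = \overline{g(\Gamma_B)(G_{{\mathcal Q},B})}$, i.e., $\Psi_{\mathcal Q}({\rm ker}\varphi_{g,A}) = {\rm ker}\varphi_{g,B}$.

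Next, since $\Psi_{\mathcal Q}:G_{{\mathcal Q},A}\to G_{{\mathcal Q},B}$ is an $R$-module isomorphism carrying the submodule ${\rm ker}\varphi_{g,A}$ onto the submodule ${\rm ker}\varphi_{g,B}$, the standard isomorphism theory for $R$-modules yields an induced $R$-module isomorphism $G_{{\mathcal Q},A}/{\rm ker}\varphi_{g,A}\cong_R G_{{\mathcal Q},B}/{\rm ker}\varphi_{g,B}$. Composing with the canonical isomorphisms above gives ${\rm BF}_g(A)\cong_R {\rm BF}_g(B)$. As $g\in{\mathcal Q}$ was arbitrary, $A$ and $B$ are strongly {\rm BF}-equivalent over ${\mathcal Q}$, which proves the theorem for both ${\mathcal Q}={\mathcal Q}^{\rm p}$ and ${\mathcal Q}={\mathcal Q}^{\rm a}$.

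I do not anticipate a serious obstacle: the work has been front-loaded into Lemmas \ref{intersection}--\ref{congkpower}, and in particular into Lemma \ref{open} identifying $\overline{g(\Gamma_A)(G_{{\mathcal Q},A})}$ with ${\rm ker}\varphi_{g,A}$, which itself rests on part (c) of Theorem \ref{fullproCtopology} via Remark \ref{applyfullproC}. The one point requiring a little care is the bookkeeping of the $R$-module structures: on the $A$-side the action is by ${\mathbb Z}[A]$ and on the $B$-side by ${\mathbb Z}[B]$, identified with the common ring $R$; what makes the argument run is exactly that this identification intertwines $\Gamma_A$ and $\Gamma_B$ through $\Psi_{\mathcal Q}$, so that $\Psi_{\mathcal Q}g(\Gamma_A) = g(\Gamma_B)\Psi_{\mathcal Q}$ for every $g\in{\mathcal Q}$, as noted just before Lemma \ref{congkpower}. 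One should also confirm that the induced quotient map is genuinely an isomorphism of $R$-modules and not merely of abelian groups, but this is immediate since $\Psi_{\mathcal Q}$ is $R$-linear and both kernels are $R$-submodules.
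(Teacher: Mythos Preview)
Your proposal is correct and follows essentially the same argument as the paper's proof: apply Lemma \ref{congkpower}, use that the homeomorphism $\Psi_{\mathcal Q}$ commutes with closure, invoke Lemma \ref{open} on both sides to identify the closures with ${\rm ker}\varphi_{g,A}$ and ${\rm ker}\varphi_{g,B}$, and then pass to quotients via the $R$-module isomorphism theorems. The only minor inaccuracy is in your commentary: Lemma \ref{open} actually relies on part (a) of Theorem \ref{fullproCtopology} (the bijection $\Upsilon$ and its inverse $V\mapsto V\cap\iota_{\mathcal C}(G)$, with $\overline{V\cap\iota_{\mathcal C}(G)}=V$), not part (c).
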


\begin{proof} Suppose $\Psi_{\mathcal Q}:G_{ {\mathcal Q},A}\to G_{ {\mathcal Q},B}$ is a profinite conjugacy over ${\mathcal Q}$. By Lemma \ref{congkpower}, we have for each $g\in{\mathcal Q}$ that $\Psi_{\mathcal Q}( g(\Gamma_A)(G_{ {\mathcal Q},A})) = g(\Gamma_B)(G_{ {\mathcal Q},B})$. Since $\Psi_{\mathcal Q}$ is a homeomorphism, we have for each $g\in {\mathcal Q}$ that
\[ \Psi_{\mathcal Q}\left(\overline{g(\Gamma_A)(G_{ {\mathcal Q},A})}\right)  = \overline{ g(\Gamma_B)(G_{ {\mathcal Q},B}) }.\]
By Lemma \ref{open} for each $g\in {\mathcal Q}$ there holds $\overline{ g(\Gamma_A)(G_{ {\mathcal Q},A}) } = {\rm ker}\varphi_{g,A}$ and $\overline{ g(\Gamma_B)(G_{ {\mathcal Q},B}) } = {\rm ker}\varphi_{g,B}$, and so
\[ \Psi_{\mathcal Q}( {\rm ker}\varphi_{g,A})  = {\rm ker}\varphi_{g,B}.\]
Thus, by the isomorphism theory for $R$-modules, $\Psi_{\mathcal Q}$ induces an $R$-module isomorphism $G_{ {\mathcal Q},A}/{\rm ker}\varphi_{g,A} \cong_R G_{ {\mathcal Q},B}/{\rm ker}\varphi_{g,B}$ for each $g\in {\mathcal Q}$. Since $G_{g,A} \cong_R G_{ {\mathcal Q},A}/{\rm ker}\varphi_{g,A}$ and $G_{g,B}\cong_R G_{ {\mathcal Q},B}/{\rm ker}\varphi_{g,B}$, we obtain $G_{g,A} \cong_R G_{g,B}$ for all $g\in{\mathcal Q}$. Thus $A$ and $B$ are strongly {\rm BF}-equivalent over ${\mathcal Q}$.
\end{proof}

\begin{corollary}\label{characterization} Suppose $A,B\in {\rm GL}_n({\mathbb Z})$ are similar and hyperbolic. Then $A$ and $B$ are profinitely conjugate over ${\mathcal Q}$ if and only if $A$ and $B$ are strongly {\rm BF}-equivalent over ${\mathcal Q}$.
\end{corollary}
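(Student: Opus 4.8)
The plan is simply to combine the two halves that together characterize profinite conjugacy over ${\mathcal Q}$: Theorem \ref{main} provides the forward implication, and the reverse implication has essentially already been recorded.

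For the ``only if'' direction, I would invoke Theorem \ref{main} directly: if $A$ and $B$ are profinitely conjugate over ${\mathcal Q}$, then they are strongly {\rm BF}-equivalent over ${\mathcal Q}$. No additional argument is needed.

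For the ``if'' direction, I would appeal to Remark \ref{sufficient}. Suppose $A$ and $B$ are strongly {\rm BF}-equivalent over ${\mathcal Q}$, i.e.\ ${\rm BF}_g(A)\cong_R {\rm BF}_g(B)$ for every $g\in{\mathcal Q}$. Using the $R$-module version of Theorem 3.27 in \cite{RZ10} (as in the proof of Theorem 4.1 in \cite{BM12}), two profinite $R$-modules that have, up to $R$-module isomorphism, the same collection of finite $R$-module quotients are topologically $R$-module isomorphic. The one fact to make explicit --- and it is immediate from Subsection \ref{sec:Module} --- is that the finite $R$-module quotients of $G_{ {\mathcal Q},A}$ are, up to $R$-module isomorphism, precisely the modules ${\rm BF}_g(A)\cong_R G_{ {\mathcal Q},A}/{\rm ker}\varphi_{g,A}$, $g\in{\mathcal Q}$, since the system $\{N_{g,A}\}_{g\in{\mathcal Q}}$ is trivially cofinal in itself; the same holds for $B$. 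Hence strong {\rm BF}-equivalence makes these two systems of finite $R$-module quotients agree, and the cited result yields a topological $R$-module isomorphism $\Psi_{\mathcal Q}:G_{ {\mathcal Q},A}\to G_{ {\mathcal Q},B}$, which is exactly what it means for $A$ and $B$ to be profinitely conjugate over ${\mathcal Q}$.

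There is no real obstacle here: the substantive content is Theorem \ref{main}, and the converse is already contained in Remark \ref{sufficient}. The only point worth stating plainly is that both implications hold uniformly for ${\mathcal Q} = {\mathcal Q}^{\rm p}$ and for ${\mathcal Q} = {\mathcal Q}^{\rm a}$, which is the case since Theorem \ref{main} and Remark \ref{sufficient} are each formulated for either choice of ${\mathcal Q}$, and by Lemma \ref{QminQ} the two choices give topologically $R$-module isomorphic completions in any event.
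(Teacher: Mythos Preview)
Your proposal is correct and matches the paper's own proof, which simply reads ``Combine Theorem \ref{main} and Remark \ref{sufficient}.'' The extra elaboration you give (unpacking Remark \ref{sufficient} and noting Lemma \ref{QminQ}) is accurate but not needed for this corollary.
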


\begin{proof} Combine Theorem \ref{main} and Remark \ref{sufficient}.
\end{proof}

\begin{corollary}\label{prinall} Suppose $A,B\in {\rm GL}_n({\mathbb Z})$ are similar and hyperbolic. The following are equivalent.
\begin{enumerate}
\item[(a)] $A$ and $B$ are strongly {\rm BF}-equivalent over ${\mathcal Q}^{\rm p}$.
\item[(b)] $A$ and $B$ are profinitely conjugate over ${\mathcal Q}^{\rm p}$.
\item[(c)] $A$ and $B$ are profinitely conjugate over ${\mathcal Q}^{\rm a}$.
\item[(d)] $A$ and $B$ are strongly {\rm BF}-equivalent over ${\mathcal Q}^{\rm a}$.
\end{enumerate}
\end{corollary}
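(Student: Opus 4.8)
The plan is to deduce all four equivalences by chaining together Corollary \ref{characterization} and Lemma \ref{QminQ}, so essentially no new argument is required. First I would apply Corollary \ref{characterization} with ${\mathcal Q} = {\mathcal Q}^{\rm p}$ to obtain the equivalence (a) $\Leftrightarrow$ (b), and then apply it again with ${\mathcal Q} = {\mathcal Q}^{\rm a}$ to obtain (c) $\Leftrightarrow$ (d). It then remains only to bridge the two halves, and for this I would establish (b) $\Leftrightarrow$ (c).

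For (b) $\Rightarrow$ (c): assume $\Psi : G_{ {\mathcal Q}^{\rm p},A} \to G_{ {\mathcal Q}^{\rm p},B}$ is a topological $R$-module isomorphism. By Lemma \ref{QminQ} applied to $A$ and to $B$ there are topological $R$-module isomorphisms $\Phi_A : G_{ {\mathcal Q}^{\rm a},A} \to G_{ {\mathcal Q}^{\rm p},A}$ and $\Phi_B : G_{ {\mathcal Q}^{\rm p},B} \to G_{ {\mathcal Q}^{\rm a},B}$. The composite $\Phi_B \circ \Psi \circ \Phi_A : G_{ {\mathcal Q}^{\rm a},A} \to G_{ {\mathcal Q}^{\rm a},B}$ is then simultaneously an $R$-module isomorphism and a homeomorphism, since each of these two properties is preserved under composition, so it witnesses that $A$ and $B$ are profinitely conjugate over ${\mathcal Q}^{\rm a}$. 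The converse implication (c) $\Rightarrow$ (b) is symmetric: given a topological $R$-module isomorphism $G_{ {\mathcal Q}^{\rm a},A} \to G_{ {\mathcal Q}^{\rm a},B}$, pre- and post-compose with $\Phi_A^{-1}$ and $\Phi_B^{-1}$. Combining (a) $\Leftrightarrow$ (b), (b) $\Leftrightarrow$ (c), and (c) $\Leftrightarrow$ (d) gives that (a)--(d) are all equivalent.

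I do not expect any genuine obstacle here, as the substantive work is entirely contained in Theorem \ref{main} (via Corollary \ref{characterization}) and in Lemma \ref{QminQ}. The only points requiring a word of care are that the composition of maps which are at once $R$-module isomorphisms and homeomorphisms is again of that type, and that the isomorphism furnished by Lemma \ref{QminQ} does respect the $R$-action on both sides — which it does, since its proof observes that the explicit topological isomorphism coming from Proposition \ref{topisoiota} is an $R$-module map. Accordingly the proof of the corollary is a short bookkeeping argument assembling the already-proved pieces.
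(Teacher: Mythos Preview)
Your proposal is correct and follows essentially the same approach as the paper: both apply Corollary \ref{characterization} separately with ${\mathcal Q}={\mathcal Q}^{\rm p}$ and ${\mathcal Q}={\mathcal Q}^{\rm a}$ for the outer equivalences, and then invoke Lemma \ref{QminQ} (applied to $A$ and to $B$) to bridge (b) and (c). Your write-up simply makes the composition argument for (b) $\Leftrightarrow$ (c) a bit more explicit than the paper does.
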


\begin{proof} Parts (a) and (b) are equivalent by Corollary \ref{characterization} applied when ${\mathcal Q} = {\mathcal Q}^{\rm p}$ and parts (c) and (d) are equivalent by Corollary \ref{characterization} applied when ${\mathcal Q} = {\mathcal Q}^{\rm a}$. By Lemma \ref{QminQ} the profinite $R$-modules $G_{ {\mathcal Q}^{\rm p},A}$ and $G_{ {\mathcal Q}^{\rm a},A}$ are topologically $R$-module isomorphic and the profinite $R$-modules $G_{ {\mathcal Q}^{\rm p},B}$ and $G_{ {\mathcal Q}^{\rm a},B}$ are topologically $R$-module isomorphic. Hence $G_{ {\mathcal Q}^{\rm p},A}$ and $G_{ {\mathcal Q}^{\rm p},B}$ are profinite conjugate over ${\mathcal Q}^{\rm p}$ if and only if $G_{ {\mathcal Q}^{\rm a},A}$ and $G_{ {\mathcal Q}^{\rm a},B}$ are profinitely conjugate over ${\mathcal Q}^{\rm a}$, which is the equivalence of parts (b) and (c).
\end{proof}

The implications of Corollary \ref{prinall} are that profinite conjugacy over ${\mathcal Q}^{\rm p}$ and over ${\mathcal Q}^{\rm a}$ are equivalent and that the principal generalized Bowen-Franks $R$-modules are indeed the principal invariants among the generalized Bowen-Franks $R$-modules parameterized by ${\mathcal Q}^{\rm a}$. Going forward we need only be concerned with profinite conjugacy of similar hyperbolic $A,B\in {\rm GL}_n({\mathbb Z})$ over ${\mathcal Q}^p$. To aid with this, we simplify the notation by writing ${\mathcal N}_A = {\mathcal N}_{ {\mathcal Q}^{\rm p},A}$, $G_A = G_{ {\mathcal Q}^{\rm p},A}$, $\iota_A = \iota_{ {\mathcal Q}^{\rm P},A}$, an element of $G_A$ as $\{ [m_k]_{k,A}\}_{k\in {\mathbb N}}$, ${\mathcal N}_B = {\mathcal N}_{ {\mathcal Q}^{\rm p},B}$ $G_B = G_{ {\mathcal Q}^{\rm p},B}$, $\iota_B = \iota_{ {\mathcal Q}^{\rm p},B}$, and an element of $G_B$ as $\{ [m_k]_{k,B}\}_{ k\in {\mathbb N}}$. We also refer to a profinite conjugacy over ${\mathcal Q}^{\rm p}$ between $A$ and $B$ simply as a profinite conjugacy $\Psi:G_A\to G_B$ between $A$ and $B$, and to strong {\rm BF}-equivalence of $A$ and $B$ over ${\mathcal Q}^{\rm p}$ simply as strong {\rm BF}-equivalence of $A$ and $B$.

\section{Properties of Profinite Conjugacies} 

\subsection{Two Characterizations of Conjugacy}

The existence of a profinite conjugacy between two similar hyperbolic toral automorphisms is insufficient to imply their conjugacy, as it is known that strong ${\rm BF}$-equivalence is insufficient to imply conjugacy (see \cite{BM19, MR05}). However, for similar hyperbolic toral automorphisms $A,B\in {\rm GL}_n({\mathbb Z})$, there are two characterizations of when profinite conjugacies between $A$ and $B$ gives conjugacy of $A$ and $B$ in terms of the embedded copies $\iota_A({\mathbb Z}^n)$ and $\iota_B({\mathbb Z}^n)$ of ${\mathbb Z}^n$, as we showed in (Theorem 5.1 in \cite{BM12}):
\begin{enumerate}
\item[(a)] $A$ and $B$ are conjugate, 
\item[(b)] there is a profinite conjugacy $\Psi:G_A\to G_B$ such that $\Psi(\iota_A({\mathbb Z}^n)) = \iota_B({\mathbb Z}^n)$, and 
\item[(c)] there is a profinite conjugacy $\Psi:G_A\to G_B$ such that $\Psi(\iota_A({\mathbb Z}^n))\cap \iota_B({\mathbb Z}^n)$ is $R$-module isomorphic to the right ${\mathbb Z}[B]$-module ${\mathbb Z}^n$.
\end{enumerate}
We structured the proof of this in the cyclic format (a) $\Rightarrow$ (b), (b) $\Rightarrow$ (c), and (c) $\Rightarrow$ (a). The proof that (a) $\Leftrightarrow$ (b) follows from the proof of (a) $\Rightarrow$ (b)  and an extra independent proof of (b) $\Rightarrow$ (a) included in \cite{BM12}, the latter highlighting the pro-${\mathcal C}$ topology ${\mathcal N}_B$ on ${\mathbb Z}^n$. But there was an error in the proof of (c) $\Rightarrow$ (a) in \cite{BM12}, which, using the notation
\[ \Delta_\Psi = \Psi( \iota_A({\mathbb Z}^n)) \cap \iota_B({\mathbb Z}^n),\]
leads to following correction of part (c):
\begin{enumerate}
\item[(c)] there is a profinite conjugacy $\Psi:G_A\to G_B$ such that $\Psi^{-1}( \Delta_\Psi)$ is $R$-module isomorphic to the right ${\mathbb Z}[A]$-module ${\mathbb Z}^n$ and $\Delta_\Psi$ is $R$-module isomorphic to the right ${\mathbb Z}[B]$-module ${\mathbb Z}^n$.
\end{enumerate}
With the inclusion of the additional condition of the correction in part (c), here are the corrected parts of the proof of the two characterization of the conjugacy.

(b) $\Rightarrow$ (c). Suppose there is a profinite conjugacy $\Psi:G_A\to G_B$ such that $\Psi(\iota_A({\mathbb Z}^n)) = \iota_B({\mathbb Z}^n)$. Then $\Delta_\Psi= \iota_B({\mathbb Z}^n)$. The map $\iota_B:{\mathbb Z}^n\to G_B$ from the right ${\mathbb Z}[B]$-module to the right ${\mathbb Z}[B]$-module $G_B$ is an $R$-module monomorphism. So the map $\iota_B^{-1}: \iota_B({\mathbb Z}^n)\to {\mathbb Z}^n$ is an $R$-module isomorphism from $\iota_B({\mathbb Z}^n) = \Delta_\Psi$ to the right ${\mathbb Z}[B]$-module ${\mathbb Z}^n$. On the other hand, from $\Delta_\Psi = \Psi(\iota_A({\mathbb Z}^n))$ we have $ \Psi^{-1}( \Delta_\Psi) = \iota_A({\mathbb Z}^n)$. The map $\iota_A:{\mathbb Z}^n\to G_A$ is an $R$-module monomorphism from the right ${\mathbb Z}[A]$-module ${\mathbb Z}^n$ to the right ${\mathbb Z}[A]$-module $G_A$. So the map $\iota_A^{-1}: \iota_A({\mathbb Z}^n)\to {\mathbb Z}^n$ is an $R$-module isomorphism from the right ${\mathbb Z}[A]$-module $\iota_A({\mathbb Z}^n) = \Psi^{-1}(\Delta_\Psi)$ to the right ${\mathbb Z}[A]$-module ${\mathbb Z}^n$. This gives (c).

(c) $\Rightarrow$ (a). The assumptions of part (c) imply the existence of $R$-module isomorphisms $h:{\mathbb Z}^n\to \Delta_\Psi$ and $g:{\mathbb Z}^n \to \Psi^{-1}(\Delta_\Psi)$ that satisfy
\[ B h^{-1} = h^{-1} \Gamma_B,\ \ gA = \Gamma_A g.\]
[The additional condition in the corrected version of part (c) gives the existence of $g$, while the original condition in part (c) gives the existence of $h$.] The mapping $h^{-1}\Psi g:{\mathbb Z}^n\to {\mathbb Z}^n$ is an $R$-module isomorphism from the right ${\mathbb Z}[A]$-module ${\mathbb Z}^n$ to the right ${\mathbb Z}[B]$-module ${\mathbb Z}^n$. [The original incorrect proof of (c) $\Rightarrow$ (a) used the $R$-module mapping $h^{-1}\Psi \iota_A$ which is where the error occurred.] This implies that $h^{-1}\Psi g$ is an automorphism of ${\mathbb Z}^n$, and gives the existence of $D\in {\rm GL}_n({\mathbb Z})$ such that $D = h^{-1}\Psi g$. This is illustrated in the following diagram.
\[ \begin{CD}
\Psi^{-1}(\Delta_\Psi) @>\Psi>> \Delta_\Psi  \\ 
@AgAA            @ AAhA \\
{\mathbb Z}^n  @>>D>   {\mathbb Z}^n
\end{CD}
\]
Using the right $R$-module structures of $h^{-1}$, $\Psi$, and $g$, there holds
\[ DA = h^{-1}\Psi gA = h^{-1}\Psi \Gamma_A g = h^{-1}\Gamma_B \Psi g = Bh^{-1} \Psi g = BD.\]
Setting $C = D^{-1}$ gives $AC = CB$, hence (a).

We state the corrected version of the two characterizations of conjugacy for similar hyperbolic toral automorphisms.

\begin{theorem}\label{twochar}
For similar hyperbolic $A,B\in {\rm GL}_n({\mathbb Z})$, the following are equivalent.
\begin{enumerate}
\item[(a)] $A$ and $B$ are conjugate in ${\rm GL}_n({\mathbb Z})$, i.e., there is $C\in {\rm GL}_n({\mathbb Z})$ such that $AC=CB$,
\item[(b)] there is a profinite conjugacy $\Psi:G_A\to G_B$ such that $\Psi(\iota_A({\mathbb Z}^n)) = \iota_B({\mathbb Z}^n)$, and 
\item[(c)] there is a profinite conjugacy $\Psi:G_A\to G_B$ such that $\Psi^{-1}( \Delta_\Psi)$ is $R$-module isomorphic to the right ${\mathbb Z}[A]$-module ${\mathbb Z}^n$ and $\Delta_\Psi$ is $R$-module isomorphic to the right ${\mathbb Z}[B]$-module ${\mathbb Z}^n$.
\end{enumerate}
\end{theorem}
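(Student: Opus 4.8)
The plan is to close the cycle of implications (a) $\Rightarrow$ (b) $\Rightarrow$ (c) $\Rightarrow$ (a). The steps (b) $\Rightarrow$ (c) and (c) $\Rightarrow$ (a) have already been carried out, in their corrected form, in the discussion preceding the statement, so the only piece that still needs to be supplied in full is (a) $\Rightarrow$ (b), which amounts to turning an integer conjugacy $AC = CB$ into a profinite conjugacy carrying $\iota_A(\mathbb{Z}^n)$ onto $\iota_B(\mathbb{Z}^n)$.

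For (a) $\Rightarrow$ (b) I would argue as follows. Suppose $C \in \mathrm{GL}_n(\mathbb{Z})$ satisfies $AC = CB$ and consider the group automorphism $\rho_C$ of $\mathbb{Z}^n$ given by $\rho_C(m) = mC$. A one-line induction gives $A^kC = CB^k$ for all $k$, hence $(A^k - I)C = C(B^k - I)$; combined with $\mathbb{Z}^n C = \mathbb{Z}^n$ this yields $\rho_C(N_{k,A}) = \mathbb{Z}^n(A^k-I)C = \mathbb{Z}^n C(B^k-I) = N_{k,B}$ for every $k \in \mathbb{N}$. Thus $\rho_C$ descends to isomorphisms $\mathbb{Z}^n/N_{k,A} \to \mathbb{Z}^n/N_{k,B}$ of the finite discrete quotients that are compatible with the transition maps, and these assemble in the inverse limit into a topological group isomorphism $\Psi \colon G_A \to G_B$. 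Since $\rho_C(mA) = mAC = mCB = \rho_C(m)B$, the map $\rho_C$ intertwines the right actions of $A$ and $B$, so $\Psi$ intertwines $\Gamma_A$ with $\Gamma_B$ and is therefore a topological $R$-module isomorphism, i.e. a profinite conjugacy. Finally $\Psi \circ \iota_A = \iota_B \circ \rho_C$ gives $\Psi(\iota_A(\mathbb{Z}^n)) = \iota_B(\mathbb{Z}^n C) = \iota_B(\mathbb{Z}^n)$, which is (b).

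For completeness I recall the other two steps. For (b) $\Rightarrow$ (c), the hypothesis $\Psi(\iota_A(\mathbb{Z}^n)) = \iota_B(\mathbb{Z}^n)$ forces $\Delta_\Psi = \iota_B(\mathbb{Z}^n)$ and $\Psi^{-1}(\Delta_\Psi) = \iota_A(\mathbb{Z}^n)$, and, $\iota_A$ and $\iota_B$ being $R$-module monomorphisms, the inverses $\iota_A^{-1}$ and $\iota_B^{-1}$ supply the two required $R$-module isomorphisms. For (c) $\Rightarrow$ (a), fix $R$-module isomorphisms $g \colon \mathbb{Z}^n \to \Psi^{-1}(\Delta_\Psi)$ and $h \colon \mathbb{Z}^n \to \Delta_\Psi$; one checks that $\iota_A(\mathbb{Z}^n)$, $\iota_B(\mathbb{Z}^n)$, $\Psi(\iota_A(\mathbb{Z}^n))$ are $R$-submodules, so $\Delta_\Psi$ and its $\Psi$-preimage are $R$-submodules and $\Psi$ restricts to an $R$-module isomorphism between them, and then $D = h^{-1}\Psi g$ is an isomorphism from the right $\mathbb{Z}[A]$-module $\mathbb{Z}^n$ to the right $\mathbb{Z}[B]$-module $\mathbb{Z}^n$, hence represented by a matrix in $\mathrm{GL}_n(\mathbb{Z})$; the computation $DA = h^{-1}\Psi\Gamma_A g = h^{-1}\Gamma_B\Psi g = BD$ then produces the conjugacy. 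I expect (c) $\Rightarrow$ (a) to be the delicate step, and it is the one mishandled in \cite{BM12}: because $\Delta_\Psi$ is only the \emph{intersection} of $\Psi(\iota_A(\mathbb{Z}^n))$ with $\iota_B(\mathbb{Z}^n)$, its $\Psi$-preimage is in general a proper submodule of $\iota_A(\mathbb{Z}^n)$, so one cannot replace $g$ by $\iota_A$; one genuinely needs the extra clause in the corrected (c) that $\Psi^{-1}(\Delta_\Psi) \cong_R \mathbb{Z}^n$ as right $\mathbb{Z}[A]$-modules. Everything else — the compatibility of the two module structures under $D$ and the passage from $DA = BD$ to a relation of the form $AC = CB$ after taking an inverse — is routine bookkeeping.
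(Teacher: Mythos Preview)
Your proposal is correct and follows the same cyclic scheme (a) $\Rightarrow$ (b) $\Rightarrow$ (c) $\Rightarrow$ (a) as the paper: your (b) $\Rightarrow$ (c) and (c) $\Rightarrow$ (a) recap exactly the corrected arguments given in the discussion preceding the theorem, and your (a) $\Rightarrow$ (b), which the paper outsources to \cite{BM12}, is the expected construction of the induced profinite conjugacy from $C$ via $m\mapsto mC$. There is nothing to add.
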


\subsection{The Embedded Copy of ${\mathbb Z}^n$}

The two characterizations of conjugacy in Theorem \ref{twochar} involve the embedded dense copies of ${\mathbb Z}^n$ through the $R$-module monomorphisms $\iota_A$ and $\iota_B$. The second characterization of conjugacy in part (c) of Theorem \ref{twochar} has the presence of the $R$-submodule $\Delta_\Psi=\Psi(\iota_A({\mathbb Z}^n))\cap\iota_B({\mathbb Z}^n)$. When similar hyperbolic $A$ and $B$ are conjugate, there exists a profinite conjugacy by Theorem \ref{twochar} for which $\Psi(\iota_A({\mathbb Z}^n)) = \iota_B({\mathbb Z}^n)$ so that $\Delta_\Psi = \iota_B({\mathbb Z}^n)$. In general, however, part (c) of Theorem \ref{twochar} poses the question of if it is possible for a profinite conjugacy $\Psi$ to send $\iota_A({\mathbb Z}^n)$ into a proper $R$-submodule of $\iota_B({\mathbb Z}^n)$. We answer this in the negative.

\begin{lemma}\label{properRsubmodule} For similar hyperbolic $A,B\in {\rm GL}_n({\mathbb Z})$, if $\Psi:G_A\to G_B$ is a profinite conjugacy between $A$ and $B$, then $\Psi(\iota_A({\mathbb Z}^n))$ is not a proper $R$-submodule of $\iota_B({\mathbb Z}^n)$.
\end{lemma}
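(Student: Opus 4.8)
The plan is to argue by contradiction: assume $\Psi(\iota_A({\mathbb Z}^n))$ is a proper $R$-submodule of $\iota_B({\mathbb Z}^n)$, and derive a contradiction with the fact that a homeomorphism must carry the dense subset $\iota_A({\mathbb Z}^n)$ of $G_A$ onto a dense subset of $G_B$.

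First I would transfer the hypothesis to ${\mathbb Z}^n$. Since $\iota_A$ and $\iota_B$ are $R$-module monomorphisms and $\Psi(\iota_A({\mathbb Z}^n))\subset\iota_B({\mathbb Z}^n)$, the composite $\phi=\iota_B^{-1}\Psi\iota_A:{\mathbb Z}^n\to{\mathbb Z}^n$ is a well-defined injective homomorphism; writing elements as row vectors, $\phi(m)=mD$ for a matrix $D\in M_n({\mathbb Z})$ with ${\rm det}(D)\ne0$, and $\Psi\iota_A=\iota_B\phi$ shows $\Psi(\iota_A({\mathbb Z}^n))=\iota_B({\mathbb Z}^n D)$. (One checks $AD=DB$, so ${\mathbb Z}^n D$ is in fact a $B$-invariant finite-index subgroup, but for the argument only its being a finite-index subgroup of ${\mathbb Z}^n$ matters.) The submodule being proper means ${\mathbb Z}^n D\subsetneq{\mathbb Z}^n$, i.e. $|{\rm det}(D)|>1$.

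Next I would invoke the topological correspondence. By Proposition \ref{cofinal} and Proposition \ref{fullprofinite}, the pro-${\mathcal C}$ topology on ${\mathbb Z}^n$ determined by ${\mathcal N}_B$ is the full pro-${\mathcal C}$ topology, in which every finite-index subgroup is open; in particular ${\mathbb Z}^n D$ is open. Applying Theorem \ref{fullproCtopology} through Remark \ref{applyfullproC} (with $G={\mathbb Z}^n$, ${\mathcal N}={\mathcal N}_B$, $\iota_{\mathcal C}=\iota_B$, ${\mathcal K}_{\mathcal C}=G_B$, and identifying ${\mathbb Z}^n$ with $\iota_B({\mathbb Z}^n)$), the closure $\overline{\iota_B({\mathbb Z}^n D)}$ is an open subgroup of $G_B$ with $\overline{\iota_B({\mathbb Z}^n D)}\cap\iota_B({\mathbb Z}^n)=\iota_B({\mathbb Z}^n D)$; equivalently, by part (c), $[\,G_B:\overline{\iota_B({\mathbb Z}^n D)}\,]=[{\mathbb Z}^n:{\mathbb Z}^n D]=|{\rm det}(D)|>1$. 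In either form, $\overline{\iota_B({\mathbb Z}^n D)}$ is a proper subset of $G_B$.

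Finally, the contradiction: $\iota_A({\mathbb Z}^n)$ is dense in $G_A$ and $\Psi:G_A\to G_B$ is a homeomorphism, so $\Psi(\iota_A({\mathbb Z}^n))=\iota_B({\mathbb Z}^n D)$ is dense in $G_B$, i.e. $\overline{\iota_B({\mathbb Z}^n D)}=G_B$, contradicting the previous step. Hence no such proper submodule exists; if $\Psi(\iota_A({\mathbb Z}^n))\subset\iota_B({\mathbb Z}^n)$ then in fact $\Psi(\iota_A({\mathbb Z}^n))=\iota_B({\mathbb Z}^n)$. The only delicate point is applying Theorem \ref{fullproCtopology} to the dynamically defined topology ${\mathcal N}_B$ rather than to the abstract full pro-${\mathcal C}$ completion, which is exactly what Remark \ref{applyfullproC} licenses since ${\mathcal N}_B$ is cofinal in ${\mathcal F}$.
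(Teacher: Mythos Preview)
Your proof is correct and follows essentially the same route as the paper's: both argue by contradiction, using that a homeomorphism carries the dense set $\iota_A({\mathbb Z}^n)$ to a dense set, while a proper finite-index subgroup of $\iota_B({\mathbb Z}^n)$ has proper closure in $G_B$ by the open-subgroup correspondence of Theorem~\ref{fullproCtopology} (via Remark~\ref{applyfullproC}). Your version is somewhat more streamlined in that you make the subgroup explicit as $\iota_B({\mathbb Z}^n D)$ for an integer matrix $D$ with $|\det D|>1$, whereas the paper works abstractly with the open subgroup $H$ defined by $\iota_B(H)=\overline{\Psi(\iota_A({\mathbb Z}^n))}\cap\iota_B({\mathbb Z}^n)$.
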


\begin{proof} Suppose $\Psi:G_A\to G_B$ is profinite conjugacy for which $\Psi(\iota_A({\mathbb Z}^n))$ is a proper $R$-submodule of $\iota_B({\mathbb Z}^n)$. Since $\iota_A({\mathbb Z}^n)\leq_o {\mathbb Z}^n$ (with respect to the pro-${\mathcal C}$ topology on ${\mathbb Z}^n$ determined by ${\mathcal N}_A$ and the identification of $\iota_A({\mathbb Z}^n)$ with ${\mathbb Z}^n$), we have $\overline{\iota_A({\mathbb Z}^n)} \leq_o G_A$. Then $\Psi\left( \overline{\iota_A({\mathbb Z}^n)}\right) \leq_o G_B$, and so by the identification of ${\mathbb Z}^n$ with $\iota_B({\mathbb Z}^n)$ we have
\[ \Psi\left( \overline{\iota_A({\mathbb Z}^n)}\right) \cap \iota_B({\mathbb Z}^n)\leq_o {\mathbb Z}^n\]
(with respect to the pro-${\mathcal C}$ topology on ${\mathbb Z}^n$ determined by ${\mathcal N}_B$). Let $H\leq_o {\mathbb Z}^n$ be the set defined by
\[ \iota_B(H) = \Psi\left( \overline{\iota_A({\mathbb Z}^n)}\right) \cap \iota_B({\mathbb Z}^n).\]
Since $\Psi$ is a homeomorphism we have
\[ \iota_B (H)  = \overline{ \Psi(\iota_A({\mathbb Z}^n))} \cap \iota_B({\mathbb Z}^n).\]
By hypothesis, $\Psi(\iota_A({\mathbb Z}^n))$ is a proper $R$-submodule of $\iota_B({\mathbb Z}^n)$. By the $1-1$ correspondence from $\{ U: U\leq_o {\mathbb Z}^n; {\mathcal N}_B\}$ to $\{ V:V\leq_o G_B\}$ there is only one open subgroup of ${\mathbb Z}^n$ (with respect to pro-${\mathcal C}$ topology determined by ${\mathcal N}_B$) whose closure is $G_B$, and this open subgroup is ${\mathbb Z}^n$. Since $\Psi(\iota_A({\mathbb Z}^n))\ne \iota_B({\mathbb Z}^n)$ it follows that $H$ is a proper open subgroup of ${\mathbb Z}^n$, for if $H={\mathbb Z}^n$, then
\[ \overline{ \Psi(\iota_A({\mathbb Z}^n)) }\cap \iota_B({\mathbb Z}^n) = \iota_B({\mathbb Z}^n)\]
which would imply that $\overline{\Psi(\iota_A({\mathbb Z}^n))} = G_B$, giving $\Psi(\iota_A({\mathbb Z}^n)) = \iota_B({\mathbb Z}^n)$, a contradiction. Hence $[{\mathbb Z}^n:H]>1$. Since $\overline{{\mathbb Z}^n}=G_B$, we have
\[ [G_B:\overline{H}] = [{\mathbb Z}^n:H]>1.\]
But since $\iota_A({\mathbb Z}^n)$ is dense in $G_A$ and since $\Psi$ is a homeomorphism, the image $\Psi(\iota_A({\mathbb Z}^n))$ is dense in $G_B$. Hence
\[ \overline{\Psi(\iota_A({\mathbb Z}^n))} = G_B,\]
which implies that $H = {\mathbb Z}^n$, so that $[G_B:\overline{H}] = 1$, a contradiction.
\end{proof}

\subsection{Cyclic Automorphisms}

For profinitely conjugate similar hyperbolic toral automorphisms $A$ and $B$ we show how $\iota_A({\mathbb Z}^n)$ and $\iota_B({\mathbb Z}^n)$ within $G_A$ and $G_B$ distinguish $A$ from $B$ when $A$ is cyclic and $B$ is not cyclic. An automorphism $A\in {\rm GL}_n({\mathbb Z})$ is called cyclic if there exists $\xi\in {\mathbb Z}^n$ such that the set $\{ \xi, \xi A,\dots, \xi A^{n-1}\}$ forms a basis for ${\mathbb Z}^n$, and $\xi$ is called a generator for $A$, i.e., the right ${\mathbb Z}[A]$-module ${\mathbb Z}^n$ is cyclic, or equivalently, $\iota_A({\mathbb Z}^n)$ is a cyclic $R$-submodule of $G_A$. As is well known, a cyclic automorphism $A$ is conjugate to the companion matrix of its characteristic polynomial $f(t) = a_0 + a_1t + \cdots + a_{n-1} t^{n-1} + t^n \in {\mathbb Z}[t]$, i.e., to
\[ C_f=\begin{bmatrix} 
0         &        1 &         0 &  \dots  & 0         & 0      \\
0         &        0 &         1 &  \dots  & 0         & 0      \\
0         &        0 &         0 &  \dots  & 0          & 0      \\
\vdots & \vdots & \vdots & \ddots & \vdots  & \vdots     \\
0         &        0 &        0  &  \dots  & 0          & 1 \\
-a_0    &   -a_1 &  -a_2  &  \dots  &-a_{n-2} & -a_{n-1} 
\end{bmatrix}.\]
In particular, cyclicity is an invariant of the conjugacy class of the companion matrix. Profinite conjugacy realizes this invariant for hyperbolic toral automorphisms in a very explicit manner.

\begin{theorem}\label{cyclicity} Suppose that $A,B\in {\rm GL}_n({\mathbb Z})$ are similar and hyperbolic. If $A$ is cyclic with generator $\xi\in{\mathbb Z}^n$, and $B$ is not cyclic, then for any profinite conjugacy $\Psi:G_A\to G_B$ there holds 
\[ \Psi(\iota_A(\xi))\not\in \iota_B({\mathbb Z}^n).\]
\end{theorem}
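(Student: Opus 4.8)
The plan is to argue by contradiction. Suppose there is a profinite conjugacy $\Psi\colon G_A\to G_B$ between $A$ and $B$ with $\Psi(\iota_A(\xi))\in\iota_B({\mathbb Z}^n)$; write $\Psi(\iota_A(\xi))=\iota_B(v)$ for some $v\in{\mathbb Z}^n$. I will deduce that $B$ is cyclic with generator $v$, contradicting the hypothesis.

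First I would transport the generator through the module structures. Since $\iota_A$ is a right ${\mathbb Z}[A]$-module homomorphism, $\iota_A(\xi A^j)=\Gamma_A^j(\iota_A(\xi))$ for every $j\geq 0$, and likewise $\iota_B(vB^j)=\Gamma_B^j(\iota_B(v))$. Because $\Psi$ is an $R$-module isomorphism we have $\Psi\Gamma_A=\Gamma_B\Psi$, so an immediate induction gives
\[ \Psi(\iota_A(\xi A^j))=\iota_B(vB^j),\qquad j=0,1,\dots,n-1.\]
As $\xi$ is a generator for $A$, the set $\{\xi,\xi A,\dots,\xi A^{n-1}\}$ is a ${\mathbb Z}$-basis of ${\mathbb Z}^n$, so it maps under the group monomorphism $\iota_A$ to a generating set of $\iota_A({\mathbb Z}^n)$; applying the group isomorphism $\Psi$ and using injectivity of $\iota_B$ shows that $\Psi(\iota_A({\mathbb Z}^n))=\iota_B(H)$, where $H$ is the subgroup of ${\mathbb Z}^n$ generated by $v,vB,\dots,vB^{n-1}$. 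By Cayley--Hamilton, $I,B,\dots,B^{n-1}$ span ${\mathbb Z}[B]$ over ${\mathbb Z}$, so $H=v\,{\mathbb Z}[B]$ is exactly the cyclic right ${\mathbb Z}[B]$-submodule of ${\mathbb Z}^n$ generated by $v$.

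Next I would invoke Lemma \ref{properRsubmodule}. From the displayed identities, $\Psi(\iota_A({\mathbb Z}^n))=\iota_B(H)\subseteq\iota_B({\mathbb Z}^n)$, and this is an inclusion of $R$-submodules of $G_B$, since $\iota_A({\mathbb Z}^n)$ is an $R$-submodule of $G_A$ and $\Psi$ is an $R$-module map. Lemma \ref{properRsubmodule} says $\Psi(\iota_A({\mathbb Z}^n))$ cannot be a proper $R$-submodule of $\iota_B({\mathbb Z}^n)$, so $\iota_B(H)=\iota_B({\mathbb Z}^n)$, and injectivity of $\iota_B$ forces $H={\mathbb Z}^n$. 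Thus $v,vB,\dots,vB^{n-1}$ generate the free abelian group ${\mathbb Z}^n$ of rank $n$; a generating set of cardinality $n$ in a free abelian group of rank $n$ is a basis, so $\{v,vB,\dots,vB^{n-1}\}$ is a ${\mathbb Z}$-basis of ${\mathbb Z}^n$ and $B$ is cyclic with generator $v$. This contradiction completes the proof.

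The only point that needs attention is the use of Lemma \ref{properRsubmodule} to upgrade ``$\Psi(\iota_A({\mathbb Z}^n))$ sits inside $\iota_B({\mathbb Z}^n)$'' to ``$\Psi(\iota_A({\mathbb Z}^n))$ equals $\iota_B({\mathbb Z}^n)$''; granting that, the conclusion that $B$ is cyclic is pure module bookkeeping, so there is no real obstacle beyond keeping the identifications straight.
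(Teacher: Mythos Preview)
Your proof is correct and follows essentially the same approach as the paper: both argue by contradiction, transport the generator via the $R$-module intertwining $\Psi\Gamma_A=\Gamma_B\Psi$ to get $\Psi(\iota_A(\xi A^j))=\iota_B(vB^j)$, conclude $\Psi(\iota_A({\mathbb Z}^n))\subseteq\iota_B({\mathbb Z}^n)$, and invoke Lemma~\ref{properRsubmodule}. The only cosmetic difference is the order in which the contradiction is reached---the paper first argues the inclusion is proper (since equality would make $B$ cyclic) and then contradicts Lemma~\ref{properRsubmodule}, whereas you apply Lemma~\ref{properRsubmodule} first to force equality and then contradict the non-cyclicity of $B$; these are contrapositive rearrangements of the same argument.
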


\begin{proof} Suppose $A$ is cyclic with generator $\xi\in{\mathbb Z}^n$. Then the set $\{\xi,\xi A,\dots,\xi A^{n-1}\}$ is a basis for ${\mathbb Z}^n$. Identifying $\xi$ with its image $\iota_A(\xi)$ in $G_A$, the set
\[ \big\{ \{ [\xi A^j]_{k,A}\}_{k\in {\mathbb N}}: j=0,1,\dots,n-1\big\}\]
is a basis for $\iota_A({\mathbb Z}^n)$. For a topological $R$-module isomorphism $\Psi:G_A\to G_B$,
set
\[ \{ [\eta_k]_{k,B}\}_{k\in{\mathbb N}} = \Psi( \{ [\xi]_{k,A}\}_{k\in{\mathbb N}}.\]
Since $\Psi$ is a $R$-module isomorphism, we have
\begin{align*}
\Psi( \{ [\xi A]_{k,A}\}_{k\in{\mathbb N}}
& = \Psi ( \{ A_k[\xi]_{k,A}\}_{k\in{\mathbb N}} \\
& = (\Psi\circ \Gamma_A)(\{ [\xi]_{k,A}\}_{k\in{\mathbb N}} \\
& = (\Gamma_B\circ \Psi) (\{ [\xi]_{k,A}\}_{k\in{\mathbb N}} \\
& = \Gamma_B ( \{ [\eta_k]_{k,B}\}_{k\in{\mathbb N}} \\
& = \{ B_k[\eta_k]_{k,B}\}_{k\in{\mathbb N}} \\
& = \{ [\eta_k B]_{k,B}\}_{k\in{\mathbb N}}.
\end{align*}
By induction we have for $1\leq r\leq n-1$ that
\[ \Psi(\{ [\xi A^r]_{k,A}\}_{k\in {\mathbb N}}) = \{ [\eta_k B^r]_{k,B}\}_{k\in{\mathbb N}}.\]

Suppose to the contrary that $\Psi(\iota_A(\xi))\in \iota_B({\mathbb Z}^n)$. Then $\{ [\eta_k]_{k,B}\}_{k\in{\mathbb N}}\in \iota_B({\mathbb Z}^n)$. Thus there is $m^\prime\in {\mathbb Z}^n$ such that $[\eta_k]_{k,B} = [m^\prime]_{k,B}$ for all $k\in{\mathbb N}$. For each $1\leq r\leq n-1$ we have
\[ \{ [\eta_k B^r]_{k,B}\}_{k\in{\mathbb N}} = \{ B_k^r[\eta_k]_{k,B}\}_{k\in{\mathbb N}} = 
\{ B_k^r[m^\prime]_{k,B}\}_{k\in{\mathbb N}} = \{ [m^\prime B^r]_{k,B}\}_{k\in{\mathbb N}}.
\]
This implies that $\{ [n_k B^r]_{k,B}\}_{k\in{\mathbb N}}\in \iota_B({\mathbb Z}^n)$ for all $1\leq r\leq n-1$.

We show that $\Psi(\iota_A({\mathbb Z}^n)) \subset \iota_B({\mathbb Z}^n)$. Let $m\in {\mathbb Z}^n$. Since $\{ \xi, \xi A,\dots, \xi A^{n-1}\}$ is a basis for ${\mathbb Z}^n$ there exist $c_1,c_2,\dots,c_n\in{\mathbb Z}$ such that
\[ m = c_1\xi + c_2 \xi A + \cdots + c_n \xi A^{n-1}.\]
Then
\begin{align*}
\Psi(\iota_A(m)) 
& = \Psi ( \{ [m]_{k,A}\}_{k\in{\mathbb N}}) \\
& = \Psi ( \{ [c_1\xi + c_2\xi A + \cdots + c_n \xi A^{n-1}]_{k,A}\}_{k\in{\mathbb N}}) \\
& = \Psi ( \{ c_1[\xi]_{k,A} + c_2[\xi A]_{k,A} + \cdots + c_n [\xi A^{n-1}]_{k,A}\}_{k\in{\mathbb N}}) \\
& = \Psi ( c_1 \{ [\xi]_{k,A}\}_{k\in{\mathbb N}} + c_2 \{ [\xi A]_{k,A}\}_{k\in{\mathbb N}} + \cdots + c_n \{ [\xi A^{n-1}]_{k,A}\}_{k\in{\mathbb N}} ) \\
& = c_1\{ [m^\prime]_{k,B}\}_{k\in{\mathbb N}} + c_2\{ [m^\prime B]_{k,B}\}_{k\in{\mathbb N}} + \cdots +  c_n\{ [m^\prime B^{n-1}]_{k,B}\}_{k\in{\mathbb N}} \\
& \in \iota_B({\mathbb Z}^n).
\end{align*}
Since $m\in{\mathbb Z}^n$ is arbitrary, we obtain $\Psi(\iota_A({\mathbb Z}^n)) \subset \iota_B({\mathbb Z}^n)$.

We show that $\Psi(\iota_A({\mathbb Z}^n)) \ne \iota_B({\mathbb Z}^n)$.  Using $m^\prime$ from above, if every element of $\iota_B({\mathbb Z}^n)$ could be written in the form
\[ c_1\{ [m^\prime]_{k,B}\}_{k\in{\mathbb N}} + c_2\{ [m^\prime B]_{k,B}\}_{k\in{\mathbb N}} + \cdots +  c_n\{ [m^\prime B^{n-1}]_{k,B}\}_{k\in{\mathbb N}}\]
for $c_i\in {\mathbb Z}$, $i=1,2,\dots,n$, then $B$ would be cyclic with generator $m^\prime$. But $B$ is not cyclic by hypothesis, and so $\Psi(\iota_A({\mathbb Z}^n)) \ne \iota_B({\mathbb Z}^n)$.

We now have $\Psi(\iota_A({\mathbb Z}^n))$ is a proper $R$-submodule of $\iota_B({\mathbb Z}^n)$. This contradicts Lemma \ref{properRsubmodule}. Therefore, $\Psi(\iota_A(\xi))\not\in \iota_B({\mathbb Z}^n)$.
\end{proof}

\subsection{Distinguishing Conjugacy Classes}

Profinite conjugacy distinguishes between the conjugacy classes of similar hyperbolic toral automorphisms according to the contrapositive of Theorem \ref{twochar} and the assumption of strong {\rm BF}-equivalence which by Corollary \ref{characterization} gives the existence of profinite conjugacies.  For similar, hyperbolic, strongly {\rm BF}-equivalent $A,B\in {\rm GL}_n({\mathbb Z})$ the following are equivalent:
\begin{enumerate}
\item[(a)] $A$ and $B$ are not conjugate,
\item[(b)] {\it for all} profinite conjugacies $\Psi:G_A\to G_B$ there holds $\Psi(\iota_A({\mathbb Z}^n)) \ne \iota_B({\mathbb Z}^n)$, and
\item[(c)] {\it for all} profinite conjugacies  $\Psi:G_A\to G_B$ either $\Psi^{-1}(\Delta_\Psi)$ is not $R$-module isomorphic to right ${\mathbb Z}[A]$-module ${\mathbb Z}^n$ or $\Delta_\Psi$ is not $R$-module isomorphic to the right ${\mathbb Z}[B]$-module ${\mathbb Z}^n$ (where $\Delta_\Psi = \Psi(\iota_A({\mathbb Z}^n))\cap \iota_B({\mathbb Z}^n)$).
\end{enumerate}

Parts (a), (b), and (c) of this equivalence have particular realizations for similar, hyperbolic, strongly {\rm BF}-equivalent $A,B\in {\rm GL}_n({\mathbb Z})$ when $A$ is cyclic, with generator $\xi$, and $B$ is not cyclic. The realization of (a) follows because cyclicity is an invariant of conjugacy. The realization of (b) follows because {\it for all} profinite conjugacies $\Psi:G_A\to G_B$ there holds $\Psi(\iota_A(\xi)) \not\in \iota_B({\mathbb Z}^n)$ by Theorem \ref{cyclicity}, which implies {\it for all} profinite conjugacies $\Psi:G_A\to G_B$ that $\Psi(\iota_A({\mathbb Z}^n))\ne \iota_B({\mathbb Z}^n)$. The realization of (c) follows because {\it for all} profinite conjugacies $\Psi:G_A\to G_B$ the $R$-submodules $\Psi^{-1}(\Delta_\Psi)$ and $\Delta_\Psi$ are $R$-module isomorphic, while the right ${\mathbb Z}[A]$-module ${\mathbb Z}^n$ is cyclic and the right ${\mathbb Z}[B]$-module ${\mathbb Z}^n$ is not cyclic, implying {\it for all} profinite conjugacies $\Psi:G_A\to G_B$ that either $\Psi^{-1}(\Delta_\Psi)$ is not $R$-module isomorphic to the right ${\mathbb Z}[A]$-module ${\mathbb Z}^n$ or $\Delta_\Psi$ is not $R$-module isomorphic to the right ${\mathbb Z}[B]$-module ${\mathbb Z}^n$.

\end{document}